\documentclass[11pt,a4paper]{amsart}
\usepackage{amssymb, amstext, amscd, amsmath}
\usepackage{url}
\usepackage{amsfonts}
\usepackage{lscape}
\usepackage{amsthm}
\usepackage{amsfonts}
\usepackage{array}
\usepackage{latexsym}
\usepackage{mathrsfs}
\usepackage{textcomp}
\usepackage{verbatim}
\usepackage{tikz}
\usepackage{ifthen}
\usepackage{thmtools, thm-restate}

\newtheorem{thm}{Theorem}[section]

\newtheorem{claim}[thm]{Claim}

\newtheorem{cor}[thm]{Corollary}

\newtheorem{lem}[thm]{Lemma}

\newtheorem{rem}[thm]{Remark}
\numberwithin{equation}{section}

\newcommand{\R}{{\mathbb{R}}}

\newcommand{\bZ}{{\mathbb{Z}}}

  \newcommand{\C}{{\mathcal{C}}}
  
  \newcommand{\E}{{\mathcal{E}}}
  
  \newcommand{\G}{{\mathcal{G}}}

  \newcommand{\I}{{\mathcal{I}}}

  \newcommand{\M}{{\mathcal{M}}}

  \newcommand{\calR}{{\mathcal{R}}}

  \newcommand{\X}{{\mathcal{X}}}

\newcommand{\val}{\mathrm{val}}

\usetikzlibrary{patterns,calc}
\tikzset{roundnode/.style={circle,draw=black!50,fill=black!20,inner sep=1.2pt}}
\tikzset{every loop/.style={}}
\tikzset{polarshift/.style args={#1/#2}{xshift=#1*cos(#2),yshift=#1*sin(#2)}}

\allowdisplaybreaks
\begin{document}
%%%%%%%%%%%%%%%%%%%%%%%%%%%%%%%%%%%%%%%%%%%%%%%%%%%%%%%%%%%%%%%%%%%%%%%%%%%%%%%%%%%%%%%%%%%%%%%%%%%%%%%%%%%%%%%%%%%%%%%%%%%%%%%%%%%%%%%%%%%%%%%%%%%%%%

\title{On Generic Linearly Constrained Frameworks}

\author{Zakir Deniz, Hakan Guler and Anthony Nixon}

\address{Department of Mathematics, D\"uzce University, D\"uzce, 81620, T{\"{u}}rk{\.{\.i}}ye.}
\email{zakirdeniz@duzce.edu.tr}

\address{Department of Mathematics, Kastamonu University, Kastamonu, 37150, T{\"{u}}rk{\.{\.i}}ye.}
\email{hguler@kastamonu.edu.tr}

\address{School of Mathematical Sciences, Lancaster University, Lancaster, LA1 4YF, U.K.}
\email{a.nixon@lancaster.ac.uk}

\date{\today}

\begin{abstract}
A linearly constrained framework in $\mathbb{R}^d$ is a bar-joint framework where, in addition, vertices with loops are constrained to lie in given affine subspaces. In the generic case, when each vertex is incident to sufficiently many loops, a characterisation of rigidity was obtained in \cite{JNT} for all $d\geq 3$. By extending this to characterise the rank function of the linearly constrained rigidity matroid (under the same loop hypothesis), sufficient conditions for a looped simple graph to be (globally) rigid in $\R^d$ are obtained. 
In the 2-dimensional case generic rigidity was characterised in \cite{ST}, and we obtain a sharper sufficient condition in this case. A key technique is the application of the discharging method.
\end{abstract}

\keywords{rigidity, global rigidity, sliders, linearly constrained framework, count matroid, discharging method}

\maketitle

\section{Introduction}

A bar-joint framework $(G,p)$ in $\mathbb{R}^d$ is the combination of a finite, simple graph $G$ and a realisation $p:V\rightarrow \mathbb{R}^d$ which assigns positions to the vertices (joints) and hence lengths to the edges (bars). The framework is rigid if the only bar-length-preserving continuous deformations of the joints arise from isometries of $\mathbb{R}^d$, and flexible otherwise. Furthermore, the framework $(G,p)$ is globally rigid if every other framework $(G,q)$ in $\mathbb{R}^d$ with the same edge lengths arises from $(G,p)$ by a composition of isometries. 

It is well known that testing the rigidity or global rigidity of a given framework is computationally challenging \cite{A,Sax} and depends both on the graph $G$ and the specific realisation $p$. Hence, as is common in the literature, we restrict our attention to generic frameworks where rigidity depends only on the underlying graph.
A graph is \emph{$d$-rigid} (resp.\ \emph{globally $d$-rigid}) if there exists a generic (defined formally below) realisation in $\mathbb{R}^d$ that is rigid (resp. globally rigid).

In many practical applications determining the (global) rigidity or flexibility of a structure or its constituent parts is crucial to understanding the form and function of the structure. However often, such as in mechanical engineering, the structure is grounded or the object is restricted to move along a wall or a wire. This motivates the study of linearly constrained frameworks. These are bar-joint frameworks where the graph allows loops and the loops are realised as affine subspaces of $\mathbb{R}^d$ to which the joints are then constrained to move only within these fixed subspaces. In this way the isometries of $\mathbb{R}^d$ can be ruled out and rigidity corresponds to a locally unique solution to the combination of bar-length and affine subspace constraints. Moreover global rigidity corresponds to a globally unique solution. Formal definitions will follow subsequently.

Previous papers on linearly constrained frameworks have given a precise combinatorial description of 2-rigidity for generic frameworks \cite{ST}, extended this to all dimensions under certain hypotheses on the affine subspace constraints \cite{CGJN,JNT}, extended the first result to global 2-rigidity \cite{GJN}, provided a sufficient connectivity condition for both 2-rigidity and global 2-rigidity \cite{G}, and characterising global $d$-rigidity when every vertex is constrained to a fixed line \cite{CMMNT}. Nevertheless giving a general graph theoretic characterisation of generic (global) $d$-rigidity remains an open problem, with the special case when there are no affine subspace constraints corresponding to the well studied (global) $d$-rigidity problem. In this article we give sufficient graph theoretic conditions that guarantee generic (global) $d$-rigidity for linearly constrained frameworks in all dimensions under additional hypotheses on the loops, as well giving a more refined sufficient condition in 2-dimensions. 

Throughout the paper we will use the term graph to describe a graph which may contain multiple edges and loops and denote such a graph by $G = (V, E, L)$ where $V, E, L$ are the sets of vertices, edges and loops, respectively. We will use the term looped simple graph to describe a graph which contains no multiple edges but may contain loops (even multiple loops), and the term simple graph for a graph which contains no loops or multiple edges. We will also use the notation $G^{[t]}$ for a looped simple graph which is obtained from a looped simple graph $G=(V,E,L)$ by adding $t\geq 0$ loops to each vertex.

Let $G=(V,E,L)$ be a looped simple graph. 
We say $G$ is {\em $k$-balanced} if every connected
component of $G-T$, where $T\subseteq V$ with $|T|\leq k$, has at least $k-|T|$ vertices each incident to at least one loop. The following theorem, which can be regarded as the linearly constrained version of Lov\'asz and Yemini's 6-connectivity result for 2-dimensional bar-and-joint frameworks \cite{LY}, says that, for a looped simple graph, being balanced `enough' guarantees 2-rigidity.

\begin{thm}[\cite{G}]\label{thm:2D-6balanced}
Let $G=(V,E,L)$ be a 6-balanced looped simple graph and $F\subseteq E\cup L$ with $|F|\leq 3$. Then $G-F$ is 2-rigid as a linearly constrained framework.
\end{thm}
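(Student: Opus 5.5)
The plan is to reduce the statement to the combinatorial characterisation of generic 2-rigidity for linearly constrained frameworks due to Streinu and Theran \cite{ST}. By that result, a looped graph $H=(V,E,L)$ is 2-rigid as a linearly constrained framework if and only if it contains a spanning subgraph $H'=(V,E',L')$ with $|E'|+|L'|=2|V|$ such that every subgraph on vertex set $X$ (with $X\neq\emptyset$) using edge set $E''$ and loop set $L''$ satisfies $|E''|+|L''|\le 2|X|$. It also requires that subgraphs with no loops satisfy $|E''|\le 2|X|-3$ whenever $|E''|>0$.

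Equivalently, via a matroid union/rank formula argument, it suffices to show the following. For every partition $\mathcal{P}=\{X_1,\dots,X_m\}$ of $V$ into parts, count $2$ for each part that is ``anchored'' (contains a loop of $G-F$ or is treated via loops), $3$ for each unanchored part, and subtract the number of edges crossing between parts; the resulting quantity must be at least the deficiency bound needed to reach rank $2|V|$. In the form I would use, the rank function is
\[
r(E\cup L)=\min_{\mathcal{P}}\Big(\sum_{X\in\mathcal{P}}\big(2|X|-3+\min\{3,\ell(X)\}\big)+\#\{\text{cross edges}\}\Big),
\]
where $\ell(X)$ counts the loops in $X$ (truncated appropriately). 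Rigidity means this minimum equals $2|V|$.

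So I would fix a minimising partition for $G-F$ and show that every part has $\ell(X)\ge 3$ after accounting for cross edges. Concretely, I would argue that the deficiency $\sum_X \big(3-\min\{3,\ell(X)\}\big)$ is at most the number of crossing edges.

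The key step uses the 6-balanced hypothesis. Contract each part to a single vertex, giving a quotient multigraph $Q$. For a part $X$ with few loops, its boundary $\partial X$ in $G$ is a vertex cut once we remove the neighbours of $X$ outside it. If $X$ is joined to the rest by few edges, let $T$ be the set of endpoints of those edges lying outside $X$, so $|T|$ is at most the number of those edges. Then $X$ (or a component containing it) is a component of $G-T$. By 6-balancedness, $X$ contains at least $6-|T|$ looped vertices. Alternatively, apply the condition with $T$ chosen inside $X$ to force loops in the complement. Either way, each part $X$ satisfies (crossing edges at $X$) $+$ (looped vertices in $X$) $\ge 6$ in $G$. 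Deleting $F$ with $|F|\le3$ then lowers this to at least $3$ for the part in total, with the losses distributed over parts. Summing over parts, each crossing edge counts twice, which gives
\[
\sum_X\min\{3,\ell(X)\}+\#\text{cross}\ \ge\ 3m-|F|/\text{(shared)}.
\]
I would then need to check that this yields the required inequality.

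The main obstacle is the bookkeeping in that final sum. The $3$ removed elements of $F$ can be concentrated on a single part, and the trivial partition $m=1$ and small parts, including singletons, must be handled carefully. For singletons, note that 6-balancedness with $T=\emptyset$ gives at least 6 looped vertices overall, and with $T=N(v)$ it gives $\deg(v)+(\text{loops at }v)\ge 6$. I would first try a discharging-style averaging: each crossing edge gives charge $1/2$ to each endpoint part, and $F$ is charged globally. I expect the global slack of $3$ on the whole vertex set to be exactly what absorbs $|F|\le3$.
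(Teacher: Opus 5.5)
\textbf{There is a genuine gap: the reduction to vertex partitions is invalid.} The step that fails is your passage from the Streinu--Theran count to a minimum over vertex partitions, and it cannot be repaired by bookkeeping. The rank of $\mathcal{R}_2^{lc}$, in the form used in this paper (\cite[Theorem 2.7]{G}), is the minimum of $|L'|+2|X_0|+\sum_{i\geq 1}(2|X_i|-3)$. The minimum runs over $L'\subseteq L$ and admissible 1-thin covers $\{X_0,\dots,X_k\}$ of $G-L'$, and members of such a cover may pairwise share a vertex. A partition together with its crossing edges (taken as $2$-element members) is only a special cover of this kind. So a minimum over partitions is at best an upper bound on the rank, and showing it is at least $2|V|$ does not certify rigidity.

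This genuinely fails. Let $G$ consist of two copies of $K_4$ sharing a vertex $s$, with one loop at each of the other three vertices of the first copy. Then $r_2^{lc}(G)=8+5=13<2|V|$, since the second $K_4$ can rotate about $s$. The minimising cover consists of the two vertex sets of the $K_4$'s, which overlap in $s$. Yet every vertex partition has value at least $14=2|V|$, even with the corrected part cost $\min\{2|X|,2|X|-3+\ell(X)\}$ (and $\min\{2,\ell(X)\}$ for singletons). This can be checked case by case, according to how the part containing $s$ meets the two copies.

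Your displayed formula is also wrong on singletons. A loopless singleton gets cost $-1$, and a single vertex with one loop gets $0$ rather than its rank $1$. Finally, the treatment of $F$ is never carried out. The inequality involving ``$|F|/\text{(shared)}$'' is not a well-formed statement, and you do not identify where the slack of $3$ comes from.

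\textbf{What the paper's argument shows is needed.} The paper does not reprove this statement; it is quoted from \cite{G}. However, the discharging proof of the stronger Theorem~\ref{thm:weakly6balanced} covers it, since 6-balanced implies weakly 6-balanced. That proof proceeds as follows.
\begin{itemize}
\item Take a minimising admissible 1-thin cover with maximal looped member $X_0$. Add $\{x,y\}$ as an extra member for each deleted edge $xy\in F$, and put the deleted loops into $\bar L$. The total charge is then less than $2|V|+3$.
\item The discharging rules give every vertex at least $2$; balancedness is applied at vertices lying in several members.
\item Every non-looped member keeps non-negative charge; for large members this uses $|Z_i|+l(Y_i)\geq 6$.
\item What remains is $|\bar L|/2$ on $\bar L$ and $|Z|/2$ on $X_0$, where $Z$ is the set of vertices of $X_0$ with a neighbour outside $X_0$. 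Balancedness gives $|Z|+|\bar L|\geq 6$, which is precisely the slack of $3$ that absorbs $|F|\leq 3$.
\end{itemize}
Your local inequality (crossing edges at a part plus looped vertices in it is at least $6$) is the right kind of use of the hypothesis. But it must be applied to members of a thin cover and to vertices lying in several members, not to the parts of a partition.
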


A graph $G$ is called {\em weakly $k$-balanced} if every connected
component of $G-T$, where $T\subseteq V$ with $|T|\leq k$, has at least $k-|T|$ loops. Clearly, $k$-balancedness implies weakly $k$-balancedness. We will use this weaker definition instead and give an analogous sufficient condition for all $d\geq 2$, that is Theorem \ref{thm:d_geq2_main}, by first characterising the rank function of this matroid (when each vertex has enough loops) in Theorem \ref{thm:rank_cover-both}. That will allow us to deduce our first main result.

\begin{thm}\label{thm:d_geq2_main}
    Let $d,t$ be positive integers with $d\geq 2$, $d\geq 2t-1$ and $G=(V,E,L)$ be a weakly $2t$-balanced looped simple graph. Let $F\subset E\cup L$ with $|F|\leq t$. Then $G^{[d-t]}-F$ is $d$-rigid as a linearly constrained framework. 
\end{thm}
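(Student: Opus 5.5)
The plan is to deduce Theorem~\ref{thm:d_geq2_main} from the rank formula of Theorem~\ref{thm:rank_cover-both}. Since every vertex of $G^{[d-t]}$ carries at least $d-t$ loops, the loop hypothesis of that theorem should hold. The one condition to verify is that $d\geq 2t-1$ gives enough loops at each vertex, and that deleting at most $t$ elements of $F$ does not push any vertex below the required number; if it does, I will first handle a vertex that lost loops by a separate reduction. Rigidity of $G^{[d-t]}-F$ then amounts to showing that the rank of its linearly constrained rigidity matroid is $d|V|$. I will argue by contradiction: suppose the rank is smaller. Take a cover $\mathcal{X}=\{X_1,\dots,X_m\}$ of the edge set together with a loop assignment attaining the minimum in Theorem~\ref{thm:rank_cover-both}, with value less than $d|V|$. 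Among such covers, choose one that is minimal, say with $m$ as small as possible and then $\sum|X_i|$ as small as possible.

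The next step is to extract structure from this minimal cover. Each $X_i$ contributes roughly $d|X_i|$, minus a deficiency of at most $\binom{d+1}{2}$. This deficiency is offset by the loops available on $X_i$, which number at least $(d-t)|X_i|$ plus the original loops of $G$ lying in $X_i$, less those deleted in $F$. A part with $|X_i|\geq 2$ and many loops has no deficiency, so the minimal cover can only be deficient because of parts carrying few original loops. Overlaps between parts are charged to the shared vertices. For each deficient part I will set $T_i$ to be its attachment vertices, meaning those vertices of $X_i$ that also lie in other parts. I will also add the endpoints of deleted edges of $F$ meeting $X_i$ to $T_i$. Then the vertices of $X_i\setminus T_i$ span a union of components of $G-T_i$.

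If $|T_i|\leq 2t$, weak $2t$-balancedness gives at least $2t-|T_i|$ original loops in each such component. Deleting $F$ removes at most $t$ of them. A direct computation, which is where $d\geq 2t-1$ enters, should then show that these loops, together with the $d-t$ added loops per vertex and the surplus edges from attachment vertices, make up the deficiency of $X_i$. If $|T_i|>2t$, the part has many attachments, and each attachment vertex contributes surplus to the count through the overlap terms. So I expect deficiencies to be paid from neighbouring parts rather than from loops.

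The hard step is the global bookkeeping: showing that the sum over the whole cover is at least $d|V|$ when the local surpluses are scattered. For this I will use a discharging argument. I start by assigning each part its charge, namely its contribution minus $d$ times its number of new vertices. Each shared vertex and each loop then sends charge to the adjacent deficient parts, and I check that every part ends with nonnegative charge. The delicate cases are small parts, such as single edges or triangles, and parts with exactly $2t$ or $2t+1$ attachment vertices, because there the balancedness bound and the overlap surplus are both tight. I also need to handle deleted loops and deleted edges in $F$ separately, each costing one unit of charge, with $|F|\leq t$ total. I expect the inequality $d\geq 2t-1$ to be exactly what closes these boundary cases.
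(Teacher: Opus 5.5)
There is a genuine gap. Your top-level plan matches the paper's: apply Theorem~\ref{thm:rank_cover-both} to a counterexample and reach a contradiction by discharging. But the proposal is built on a wrong picture of that rank formula, and the step that actually decides the theorem is left unspecified.

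\textbf{The shape of the formula.} Since $F\subseteq E\cup L$ consists of original edges and loops, $G^{[d-t]}-F=(G-F)^{[d-t]}$. So the theorem applies directly to $H=G-F$, and no ``separate reduction'' for vertices losing loops is needed. More importantly, the formula has none of the features your bookkeeping is designed around: no parts worth $d|X_i|-\binom{d+1}{2}$, no overlaps, no attachment vertices shared between parts. After separating the $(d-t)|V|$ contributed by the added loops, what remains is a $t$-count matroid, and the rank is
\[(d-t)|V|+|T|+t|X_0|+k(2t^2+t-1).\]
Here $T$ is a set of individually counted edges and loops, and $X_0$ is the single looped member, inducing every loop not in $T$. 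The remaining members appear only when $d=2t-1$; they are pairwise disjoint and disjoint from $X_0$, and each has size exactly $2t+1$. (For $d\ge 3$ no overlapping-cover formula of the kind you describe is available; the loop hypothesis is what avoids needing one.) Consequently your planned ``direct computation'', in which loops and overlap surplus offset a $\binom{d+1}{2}$-type deficiency, has nothing to compute. Also, $d\ge 2t-1$ plays no role in the counting. It is used only to obtain the formula, via $d-t\ge\lfloor d/2\rfloor$ in Theorem~\ref{thm:d-dimchar}.

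\textbf{The missing surplus.} The missing idea is where the $|F|\le t$ units come from. Put $T'=T\cup F$. The total charge $(d-t)|V|+|T'|+t|X_0|+k(2t^2+t-1)$ is then less than $d|V|+t$, so one must exhibit a surplus of $t$, and this surplus is global, not per part. The paper uses these rules: each vertex takes $d-t$ from $V$, and $t$ from $X_0$ if it lies there; each element of $T'$ sends $\tfrac12$ to each of its ends outside $X_0$; a $(2t+1)$-set sends $t-\tfrac s2$ to a vertex meeting $s$ elements of $T'$. Weak $2t$-balancedness, with the outside neighbours as separator, gives every uncovered vertex and every $(2t+1)$-set at least $2t$ incident elements of $T'$. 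Hence every vertex reaches $d$, and each $(2t+1)$-set keeps at least $t-1\ge 0$. Each loop of $T'$, and each edge of $T'$ with an end in $X_0$, retains $\tfrac12$. One further application of weak $2t$-balancedness, with separator $Z$ (the vertices of $X_0$ having a neighbour outside $X_0$), gives $|Z|+|T'\cap L|\ge 2t$ when $T'\neq\emptyset$. That is a leftover of at least $t$, and the contradiction follows.

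Your local scheme does not produce this. If $F$ deletes loops from a component having exactly $2t-|T_i|$ of them, nothing local compensates. You defer precisely this situation to unspecified ``boundary cases'' closed by $d\ge 2t-1$, but that inequality is not where the slack comes from.
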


We remark that the case when $d=1$ is simpler and it is straightforward to verify that a looped simple graph in which every connected component contains a loop is 1-rigid. In other words every weakly 1-balanced looped simple graph is 1-rigid.

In the 2-dimensional case we shall go one step further and show in Theorem \ref{thm:weakly6balanced} that the same conclusion holds without the additional hypothesis on loops. That is, for an arbitrary looped simple graph weakly 6-balancedness is sufficient for 2-rigidity. This strenghens Theorem \ref{thm:2D-6balanced} by showing that `$6$-balanced' can be replaced by weakly 6-balanced.

\begin{thm}\label{thm:weakly6balanced}
Let $G=(V,E,L)$ be a weakly 6-balanced looped simple graph and $F\subseteq E\cup L$ with $|F|\leq 3$. Then $G-F$ is 2-rigid as a linearly constrained framework.
\end{thm}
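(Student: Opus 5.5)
We propose to prove this via the Streinu--Theran characterisation \cite{ST} of generic 2-rigidity for linearly constrained frameworks. A looped graph $H=(V,E,L)$ is 2-rigid if and only if it contains a spanning subgraph $H'$ with $|E(H')|+|L(H')|=2|V|$ that is $(2,0,3)$-sparse with respect to loops, i.e.\ $(2,3)$-sparse on edges. Equivalently, in rank-function form:
\[
\mathrm{rank}(H)=\min\Big\{\sum_{i}\big(2|V_i|-3\big)+\sum_j 2|U_j|\Big\},
\]
where the minimum runs over covers of the edges by vertex sets $V_i$ with no loops counted, together with looped parts $U_j$ that absorb their loops.

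Since $G-F$ need not have enough loops to use a cover formula directly, we argue by contradiction. Suppose $G-F$ is not 2-rigid. Then the rank deficiency yields a partition $\mathcal{P}$ of $V$ into \emph{rigid pieces}, namely maximal rigid looped components (which are "pinned") and maximal 2-rigid unlooped components (with 3 degrees of freedom each), such that
\[
\sum_{X\in \mathcal{P}} f(X) > \big|\{\text{edges and loops of } G-F \text{ between/absorbed by pieces}\}\big|,
\]
where $f(X)=3$ for an unpinned rigid body and $f(X)=0$ for a pinned one. Singletons count as bodies with 2 degrees of freedom if loopless; more precisely, a singleton has $2$ freedoms minus its loops, capped at $0$. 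In other words, we obtain a counting inequality relating degrees of freedom of the pieces to the edges connecting different pieces plus the loops in unpinned pieces, with the $|F|\le 3$ removed elements added back on the right-hand side.

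The core of the proof is a discharging argument refuting this inequality. Give each piece initial charge equal to its number of incident inter-piece edges plus its loops, minus $2\times$ its freedoms, and each deleted element a charge bounded by $3$ in total. Weak 6-balancedness supplies the local lower bounds needed.

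1. Take a piece $X$ and its attachment set $T$, consisting of the endpoints in other pieces of edges leaving $X$. If $|T|\le 5$, then the component of $G-T$ containing $X$ has at least $6-|T|$ loops. These loops must lie in pieces inside that component, which either pins those pieces or supplies charge to them.
2. If the piece has large attachment, then it has many leaving edges, hence large charge.

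We then redistribute charge along inter-piece edges. Each edge gives half its charge to each endpoint piece, and loops are assigned to their pieces. Pieces with small attachment sets receive charge through the component guaranteed by balancedness. The goal is to show that every piece ends with nonnegative charge after paying for $2\times$ its freedoms, with a surplus of at least $2\cdot 3$ overall to pay for $F$.

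Simple-graph (no multiple edges) structure is used to bound attachments. A small piece such as a singleton of degree $d$ has $d$ distinct neighbours, so a low-degree vertex $v$ yields a small separator $N(v)$, with $|N(v)|\le 5$, to which balancedness applies.

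The main obstacle is the discharging step. Because loops may be concentrated on a few vertices (weak rather than strong balancedness), a single heavily looped vertex can satisfy many balancedness demands at once, while a pinned piece cannot use more than its own freedoms' worth of loops. Handling this requires careful accounting of surplus loops, i.e.\ loops beyond $2$ on a vertex that are wasted.

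We would first try a two-round rule:
\begin{itemize}
\item[(a)] Each unpinned piece with attachment set of size $s<6$ draws charge from the looped pieces in its separated component, via a path of pieces, charging along paths or across separators.
\item[(b)] We verify that no looped piece is overdrawn by applying balancedness to $T$ chosen as the union of separators of distinct demanding pieces, which is possible as long as $|T|\le 6$.
\end{itemize}

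Finally, we would handle the $|F|\le 3$ removals by running the same count on $G$ with a deficit of 3. Edge-deletion can be absorbed because each deleted edge reduces charge by at most $1$ while every separator argument still holds with $|T|$ increased by one endpoint.
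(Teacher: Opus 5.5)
\textbf{There is a genuine gap.} Your argument rests on a certificate that does not exist in general. Non-rigidity of $G-F$ does not yield a \emph{partition} of $V$ into rigid pieces with more freedoms than connecting edges and loops, because maximal rigid subgraphs in the plane can share a vertex.

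For example, glue two copies $A,B$ of $K_5$ at a vertex $v$, and put one loop at each vertex of $A-v$. Then $A$ is pinned and $B$ rotates about $v$, so the rank is at most $2\cdot 5+(2\cdot 5-3)=17<18=2|V|$. Yet every rigid vertex set lies inside $V(A)$ or $V(B)$. Hence in any partition into rigid pieces, the piece containing $v$ lies in one copy, and the four edges from $v$ into the other copy join distinct pieces. By subadditivity of rank, the pieces then account for at least $10+5+4$ or $7+8+4$, that is $19$, constraints. So freedoms minus connections is at most $18-19<0$ for every partition.

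The correct certificate, which the paper's second proof uses (the rank formula of \cite[Theorem 2.7]{G}), is a set $L'$ of loops together with an admissible $1$-thin cover $\{X_0,\dots,X_k\}$ of $G-F-L'$. Its members may pairwise share a vertex, and the looped member $X_0$ induces all remaining loops. Its value is $|L'|+2|X_0|+\sum_{i\ge1}(2|X_i|-3)<2|V|$. Any discharging must therefore let a vertex draw charge from several members at once. This is why the paper gives a vertex lying in at least two members only $\tfrac32$ from $X_0$, or from a member of size at least $7$.

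Even granting a correct certificate, the core of your proof is not there, on three counts.
\begin{itemize}
\item Your rule (b) needs the union of the separators of all pieces drawing on one looped piece to have at most $6$ vertices, and nothing guarantees this.
\item You rightly call loops concentrated at one vertex the main obstacle, but you leave it unresolved.
\item Enlarging separators by endpoints of deleted edges weakens the bound $6-|T|$ rather than absorbing $F$.
\end{itemize}

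The paper handles these as follows. Take a counterexample minimising $|V|$ and then $|L|$. If a vertex $v$ has two loops, deleting one loop must destroy weak $6$-balancedness. This forces $\{v\}$ to be a component of $G-l_1-T$ with $|T|\le 4$, so $\deg v\le 4$. Then:
\begin{itemize}
\item If $v$ keeps two loops in $G-F$, it is deleted and loops $s_i$ are placed at its neighbours. It is then restored by a $0$-loop extension, trading each $s_i$ for $vx_i$ via the circuit $\{s_i,vx_i,l_1,l_2\}$.
\item Otherwise a deleted loop at $v$ is traded for a new non-edge $vx$, which is put into $F$.
\end{itemize}
Once every vertex has at most one loop, weak $6$-balancedness is $6$-balancedness and Theorem \ref{thm:2D-6balanced} applies.

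In the discharging version, choosing $X_0$ maximal gives each vertex outside $X_0$ at most one loop of $G-F$, hence at most four loops of $G$. Deleted edges become extra $2$-vertex members and deleted loops join $\bar L$, raising the total charge by $|F|\le 3$. This is paid for by the surplus $\tfrac12(|Z|+|\bar L|)\ge 3$, obtained by applying weak $6$-balancedness to the set $Z$ of vertices of $X_0$ with a neighbour outside $X_0$.
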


We then strengthen this result further by taking inspiration from the bar-joint case.
Jackson and Jord\'an \cite{JJ} showed that 
every 6-connected graph is globally 2-rigid as a bar-joint framework. 
In \cite{GMRWY}, the authors relaxed this connectivity for global 2-rigidity. A graph $G$ with at least $k+1$ vertices is {\em essentially $k$-connected} if there is no $X\subseteq V(G)$ with $|X|<k$
such that at least two components of $G-X$ are nontrivial, where a nontrivial component means it contains at least one edge. They showed the following.

\begin{thm}[\cite{GMRWY}]\label{thm:4connected6essential}
Every 4-connected and essentially 6-connected graph is globally 2-rigid as a bar-and-joint framework.
\end{thm}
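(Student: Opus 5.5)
By the result of Jackson and Jord\'an (with Hendrickson's necessary conditions), a graph on at least four vertices is globally 2-rigid as a bar-joint framework if and only if it is 3-connected and redundantly 2-rigid. A 4-connected graph is certainly 3-connected. So the plan reduces the statement to showing that $G-e$ is 2-rigid for every edge $e$ of a 4-connected, essentially 6-connected graph $G$.

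For rigidity of $G-e$ I would use the Lov\'asz--Yemini rank formula for the generic 2-dimensional rigidity matroid. It says $r(H)=\min\sum_i (2|V(X_i)|-3)$, where the minimum is over covers of $E(H)$ by subgraphs $X_i$ any two of which share at most one vertex. Suppose $G-e$ is not rigid. Take the cover by its rigid components, that is, its maximal rigid subgraphs; any edge in no larger rigid subgraph forms a trivial component. This cover attains the rank, and since $G-e$ is not rigid,
\[\sum_{i=1}^k (2|V(X_i)|-3) \le 2|V|-4 .\]
I would rewrite the left side as $\sum_{v}2c(v)-3k$, where $c(v)$ is the number of components containing $v$. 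I then aim for a contradiction by a discharging argument in the spirit of Lov\'asz--Yemini. Each component $X_i$ starts with charge $2|V(X_i)|-3$ and sends charge to its vertices, and the goal is that every vertex ends with at least $2$, with a surplus of at least $1$ somewhere. Trivial components send $\tfrac12$ to each endpoint. Nontrivial components send $2$ to each vertex that lies in no other component, and distribute the rest over their boundary vertices.

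The connectivity hypotheses enter at two points. First, consider a nontrivial rigid component $X$ that is not all of $G$. Its boundary vertices (those also lying in other components) separate $V(X)\setminus\partial X$ from the rest, so there is a deficit to cover. Essential 6-connectivity forces either $|\partial X|\ge 6$, or everything outside $X$ is edgeless, so the outside consists of isolated vertices attached only to $X$. In the second case 4-connectivity gives each such outside vertex at least four neighbours in $\partial X$. These vertices can then be absorbed into $X$ (a vertex with $\ge 2$ edges into a rigid graph extends it rigidly), which contradicts maximality unless the missing edge $e$ is involved. Second, consider a vertex lying only in trivial components. Then its degree in $G-e$ is at least $3$, and if its degree is at least $4$ it receives at least $2$. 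Vertices of degree $4$ or $5$ in $G$ are the delicate ones, because 4-connectivity alone gives them only $4\cdot\tfrac12=2$ or less after deleting $e$. Here I would use essential 6-connectivity applied to $N(v)\cup N(u)$ for adjacent low-degree vertices $u,v$, to show that such vertices cannot cluster into small nontrivial pieces separated by fewer than six vertices.

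The main obstacle is the discharging bookkeeping around the deleted edge $e$ and vertices of degree $4$ and $5$. A degree-4 vertex incident to $e$ has only three edges left, and we must show that its neighbourhood region contributes surplus elsewhere. I would first handle the case where $e$ lies inside what would be a rigid component, using the boundary count $|\partial X|\ge 6$ to generate at least $\tfrac32$ surplus per large component. I would then check the small exceptional configurations, namely at most two nontrivial components and separators of size $4$ or $5$ with one trivial side, by direct case analysis.
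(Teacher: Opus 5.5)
The paper does not prove this theorem. It quotes it from \cite{GMRWY}, and transfers that method to the linearly constrained setting in the proof of Theorem~\ref{thm:main}. Your framework is the same one: the Jackson--Jord\'an reduction to redundant rigidity, a rank-attaining 1-thin cover, and discharging. The proposal, however, has a genuine gap exactly where the work lies.

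Your structural claim is false. You assert that for a nontrivial rigid component $X$, essential 6-connectivity forces either $|\partial X|\ge 6$ or an edgeless outside. Essential 6-connectivity only says that at most one component of $G-\partial X$ contains an edge. That component may lie outside $X$, while $V(X)\setminus\partial X$ is an independent set joined only to $\partial X$ (for instance $X\supseteq K_{4,m}$ with $m\ge 3$ and the $4$-side equal to $\partial X$). Further cases are also missed:
\begin{itemize}
\item $V(X)\setminus\partial X$ may be empty, e.g.\ $X=K_4$ with each vertex having one further neighbour.
\item If $e$ joins $V(X)\setminus\partial X$ to $V\setminus V(X)$, then $\partial X$ separates $G-e$ but not $G$.
\end{itemize}
In all these situations your rule sends $2$ to each interior vertex and spreads the remaining $2|\partial X|-3$ over $\partial X$. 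A vertex lying in $X$ and in exactly one trivial component then gets only $\tfrac{2b-3}{b}+\tfrac12<2$, where $b=|\partial X|\in\{3,4,5\}$. Nothing in your sketch limits how many such vertices occur, so this deficit is not absorbed. Deferring it to ``direct case analysis'' of ``at most two nontrivial components'' presupposes a structure you have not derived.

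The missing ingredient is the loopless version of the paper's rules R5A/R6B and Claims~\ref{cla:size4} and~\ref{cla:size_geq5}:
\begin{itemize}
\item a vertex $x$ with $\X'(x)=\{X,Y\}$, $|Y|=2$, receives $\tfrac32$ from $X$;
\item vertices with $N[x]\subseteq V(X)$ receive $2$, and all others receive $1$.
\end{itemize}
With these rules the final charge of $X$ is $|P|+\tfrac12|S|-3$, where $S$ is the set of such $x$ and $P$ the remaining non-interior vertices. If this is negative, replace each vertex of $S$ by its unique neighbour outside $X$. This gives a set $S_u\cup P$ of at most five vertices, which one then shows separates an edge of $X$ from an edge outside $X$, contradicting essential 6-connectivity.

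By contrast, the difficulty you locate is not the real one:
\begin{itemize}
\item A vertex lying only in trivial components receives $\tfrac12 d_{G-e}(v)$. This is below $2$ only for an endpoint of $e$ of degree $4$, so at most two vertices are each short by $\tfrac12$.
\item Since $\sum_i(2|V(X_i)|-3)\le 2|V|-4$, a total vertex charge of $2|V|-3$ already gives a contradiction. No surplus is needed, and this deficit is absorbed. Alternatively, add the endpoint pair of $e$ as an extra cover member of charge $1$, as the paper does.
\item For adjacent $u,v$ of degree $4$ or $5$, the set $(N(u)\cup N(v))\setminus\{u,v\}$ can have six or more vertices, so essential 6-connectivity says nothing about it.
\end{itemize}
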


As given in \cite{G}, Theorem \ref{thm:2D-6balanced} and the characterisation of linearly constrained global 2-rigidity given below in Theorem \ref{thm:glob_char} immediately imply that every 6-balanced looped simple graph is globally 2-rigid as a linearly constrained framework.
We shall relax this balancedness condition by first using the weaker version and then by adding an extra condition. For this extra condition we will transfer the essential connectivity concept to looped simple graphs via balancedness and say a looped simple graph $G=(V,E,L)$ is called {\em essentially $k$-balanced} if $|V|\geq k+1$ and for every $T\subset V$ with $|T|<k$, each component of $G-T$ with at least one simple edge has at least one loop. The key difference between (weak) balancedness and essential balancedness is that essential balancedness ignores the connected components of $G-T$ with only one vertex. With this definition we can state our third main result, the linearly constrained analogue of Theorem \ref{thm:4connected6essential}.

\begin{thm}\label{thm:main}
Let $G=(V,E,L)$ be a weakly 4-balanced and essentially 6-balanced looped simple graph.
Then $G$ is globally 2-rigid as a linearly constrained framework. 
\end{thm}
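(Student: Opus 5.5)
The plan is to reduce the statement to a redundant rigidity statement via the characterisation of linearly constrained global 2-rigidity, Theorem \ref{thm:glob_char}. That result should say, roughly, that a looped simple graph is globally 2-rigid if and only if each connected component is either a single vertex with at least two loops, or is suitably 2-balanced and redundantly rigid, meaning $G-f$ is 2-rigid for every $f\in E\cup L$. The balancedness part comes directly from the hypothesis. Weakly 4-balanced implies weakly 2-balanced, and it also gives $2$-connectivity in the appropriate looped sense: deleting $T$ with $|T|\le 1$ leaves components carrying at least $3$ loops. There is one point to check here. If Theorem \ref{thm:glob_char} is phrased with ordinary rather than weak balancedness, the only discrepancy is a component of $G-T$ consisting of one vertex carrying several loops. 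I would dispose of this by a direct local argument: such a vertex is pinned by its loops and cannot obstruct global rigidity. So the heart of the proof is to show that $G-f$ is $2$-rigid for every single edge or loop $f$.

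For that I would use the Streinu--Theran count characterisation of 2-rigidity, i.e. the rank function of the 2-dimensional linearly constrained rigidity matroid. Suppose for contradiction that $G-f$ is not rigid. Then the rank deficiency is witnessed by a cover of $E\cup L-f$ by a family $\mathcal{X}$ of vertex sets. Loopless sets $X$ with $|X|\ge 2$ contribute $2|X|-3$, looped sets contribute $2|X|$, and the deficiency means
\[ |E\cup L|-1 \;\ge\; \sum_{X\in\mathcal{X}} \val(X) \quad\text{is violated, i.e.}\quad \sum_{X\in\mathcal X}\val(X)<2|V|. \]
I would take a minimal such cover, so that its members are the rigid components. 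Loopless members then pairwise meet in at most one vertex, and every loop lies in a looped member.

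The contradiction will come from a discharging argument, mirroring the one used for Theorem \ref{thm:weakly6balanced}. Give each vertex charge $2$ and each member $X$ charge $\val(X)$, and redistribute vertex charge to the members containing it. The hypotheses control the members as follows.
\begin{itemize}
\item For a singleton component, weak 4-balancedness applied with $T=N(v)$ shows that a vertex with fewer than $4$ neighbours carries enough loops. Hence every vertex satisfies $\deg(v)+|L(v)|\ge 4$, which forces a vertex covered only by edges to lie in at least four members.
\item For a loopless member $X$ containing an edge, apply essential 6-balancedness with $T$ equal to the set of vertices of $X$ shared with other members. If $|T|\le 5$, the component of $G-T$ containing $X-T$ has an edge but no loop, which is impossible. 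So $X$ has at least $6$ attachment vertices, enough to pay its deficit of $3$ plus the share needed to absorb the removed element $f$.
\end{itemize}
Summing the redistributed charges should give $\sum\val(X)\ge 2|V|$, contradicting the assumed deficiency.

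The main obstacle will be this discharging step, specifically when the removed element $f$ lies in a small member (two vertices joined by $f$, or a vertex losing its loop) and when many members overlap at low-degree vertices. Here the slack provided by the thresholds $4$ and $6$ must be distributed carefully. If the straightforward rule is insufficient, I would first try sending $\tfrac12$ unit from each attachment vertex to each loopless member containing it, and treat the members meeting $f$ separately.
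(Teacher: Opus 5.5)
Your reduction to redundant $\mathcal{L}_2$-rigidity via Theorem \ref{thm:glob_char} matches the paper. Your worry about condition (ii) is moot: weak 4-balancedness already gives every component of $G-X$, $|X|=2$, at least two loops. The gap is in the discharging. It carries the whole proof, you leave it as a hope, and the concrete ingredients you do give are incorrect.

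First, the cover characterisation you state is not the rank formula. Besides the looped member $X_0$ and the loopless members, one needs a set $L'$ of loops counted individually, giving value $|L'|+2|X_0|+\sum_{i\ge 1}(2|X_i|-3)$ as in \cite{G}. If every loop must lie in a looped member, the minimum can exceed the rank. A triangle with loops at two of its vertices has rank $5$, but your minimum is $6=2|V|$. So non-rigidity of $G-f$ need not be witnessed by a cover of your form. The paper's counting depends on exactly this set of individually counted loops, each passing $\tfrac12$ to its vertex.

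Second, the claim that a loopless member $X$ has at least six attachment vertices is false. With $T$ the attachment set, $X\setminus T$ may be independent, each of its vertices adjacent only to $T$. Then $G-T$ has only trivial components there, and essential 6-balancedness says nothing. For instance, take $X$ inducing $K_{4,4}$ with one side as the attachment set. Likewise, a loopless vertex need not lie in four members, since all its edges may lie in one member.

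The paper supplies several ideas your sketch lacks.
\begin{itemize}
\item It first shows every vertex has at most one loop. In a minimal counterexample, a vertex $v$ with two loops must have $|N(v)|\le 2$, since otherwise $G$ minus one of those loops is a smaller instance. Deleting $v$, adding a loop at each neighbour, and restoring $v$ by a 1- or 2-loop extension (Lemma \ref{lem:ddim-kloop}) then gives redundant rigidity.
\item Hence $G$ is genuinely 4-balanced, and this is used throughout: loopless vertices have degree at least $4$, degree-3 vertices carry a loop, and $l(X_i)\le 3$ by maximality of $X_0$.
\item Charges flow from the loop set, from $X_0$, and from the $X_i$ (including $\{u,v\}$ if $f=uv$) to vertices. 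The rules depend on $|X|$ and on whether $N[x]\subseteq X$ or $x$ lies only in $X$ and one 2-set.
\item Splitting $X=N\cup T\cup S\cup P$ by the charge each vertex draws, one gets $\mu'(X)\ge 0$ unless $|N|\ge|P|$ and $|T|+|S|+2|P|\le 5$. Only then is essential 6-balancedness applied, to the cut $U=T\cup S_u\cup P$, which contains the outside neighbours of $S$. A degree argument then produces an edge in a loopless component.
\item Even so, members of size $4$ or at least $5$ can end with charge $-\tfrac12$ or $-1$. These deficits are paid by the loops that only that member induces. The surplus of $1$ needed to absorb $f$ comes from $|Z|+|\bar L|\ge 4$.
\end{itemize}
Without these steps the proposal is not yet a proof.
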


We use the discharging method as the main tool for our proofs.
The discharging method is famous for its role in the proof of the four color theorem for planar graphs. We were inspired by \cite{GMRWY}, which marked the first application of this method in rigidity, but the method seems to be very suitable for the bounded graphs that arise often in rigidity theory.

\section{Preliminaries}

We begin with the relevant background from rigidity theory.

\subsection{Rigid graphs}
A framework $(G,p)$ in $\mathbb{R}^d$ is \emph{generic} if the coordinates of $p$ (as a set) are algebraically independent over the rationals. By focussing on generic frameworks we may make use of a linearisation known as infinitesimal rigidity \cite{AR} and hence study matrices and matroids.

Define the \emph{rigidity matrix} $R_d(G,p)$ of the framework $(G,p)$ to be the $|E|\times d|V|$ matrix whose rows are indexed by the edges and columns indexed by $d$-tuples of the vertices. The row for an edge $e=uv$ is given by:
$$\begin{pmatrix} 0 & \dots & 0 & p(u)-p(v) & 0 & \dots & 0 & p(v)-p(u) & 0 & \dots  \end{pmatrix}$$ 
where $p(u)-p(v)$ occurs in the $d$-tuple of columns indexed by $u$, $p(v)-p(u)$ occurs in the $d$-tuple of columns indexed by $v$ and $p(u),p(v)\in \mathbb{R}^{d}$.
It is easy to see that $\mbox{rank } R_d(G,p)\leq d|V|-\binom{d+1}{2}$ whenever $p$ affinely spans $\mathbb{R}^d$. 

The {\em $d$-dimensional rigidity matroid} of a graph $G=(V,E)$ is the matroid $\mathcal{R}_d(G)$ on $E$ in which a set of edges $F\subseteq E$ is independent whenever the corresponding rows of $R_d(G,p)$ are independent, for some (or equivalently every) generic $p$. 
We denote the rank function of $\mathcal{R}_d(G)$ by $r_d$ and put $r_d(G)=r_d(E)$.

It is easy to see that, when $|V(G)|\geq d+1$, an independent set $F$ in $\mathcal{R}_d(G)$ contains at most $d|V(F)|-\binom{d+1}{2}$ edges.
When $d=1$, it is an elementary folklore result that $\mathcal{R}_1(G)$ is precisely the cycle matroid of $G$. 
When $d=2$, \cite{PG} (as rediscovered in \cite{L}) characterised 2-rigid graphs and \cite{LY} extended this to characterise the rank function of $\mathcal{R}_2(G)$. However for $d\geq 3$ no co-NP characterisation of $d$-rigidity is known.

\subsection{Linearly constrained frameworks}
A {\em linearly constrained framework in $\R^d$}  is a triple $(G,
p, q)$ where $G=(V,E,L)$ is a looped simple graph, $p:V\to \R^d$ and
$q:L\to \R^d$ are maps. For $v_i\in V$ and $e_j\in L$ we put $p(v_i)=p_i$ and
$q(e_j)=q_j$.

An {\em infinitesimal motion} of $(G, p, q)$ is a map $\dot
p:V\to \R^d$ satisfying the system of linear equations:
\begin{eqnarray}
\label{eqn1} (p_i-p_j)\cdot (\dot p_i-\dot p_j)&=&0 \mbox{ for all $v_iv_j \in E$}\\
\label{eqn2} q_j\cdot \dot p_i&=&0 \mbox{ for all incident pairs $v_i\in V$ and
$e_j \in L$.}
\end{eqnarray}
The second constraint implies that  the infinitesimal velocity of each
$v_i\in V$ is constrained to lie on the hyperplane $\mathcal{H}$ through $p_i$ with normal $q_j$
for each loop $e_j$ incident to $v_i$. Let $\mathcal{L}_d(G,p,q)$ be the set of all such hyperplanes $\mathcal{H}$ in $\mathbb{R}^d$ for the linearly constrained framework $(G,p,q)$. Throughout the paper when using this notation the graph will be clear, and a generic framework considered, hence we will simply use $\mathcal{L}_d$.

The {\em rigidity matrix $R_d (G, p, q)$} of the linearly constrained framework $(G, p, q)$ is  the
matrix of coefficients of this system of equations for the unknowns
$\dot p$. Thus $R_d (G, p, q)$ is a $(|E|+|L|)\times d|V|$ matrix, in
which: the row indexed  by an edge $v_iv_j\in E$ has $p(u)-p(v)$ and
$p(v)-p(u)$ in the $d$ columns indexed by $v_i$ and $v_j$,
respectively and zeros elsewhere; the row indexed  by a loop
$e_j=v_iv_i\in L$ has $q_j$  in the $d$ columns indexed by $v_i$ and
zeros elsewhere. The $|E|\times d|V|$ sub-matrix consisting of the rows indexed by $E$ is the {\em bar-joint rigidity matrix} $R_d(G-L,p)$ of the bar-joint framework $(G-L,p)$. The linearly constrained framework $(G,p,q)$ is called \emph {infinitesimally rigid} if the rank of $R_d(G,p,q)$ is $d|V|$ (equivalently if the only infinitesimal motion of $(G,p,q)$ is the zero motion).

Two $d$-dimensional linearly constrained frameworks $(G,p,q)$ and
$(G,\tilde p,q)$ are {\em equivalent} if
\begin{eqnarray*}
  \|p_i-p_j\|^2&=&\|\tilde p_i-\tilde p_j\|^2 \mbox{ for all $v_iv_j \in E$, and}\\
p_i\cdot q_j&=&\tilde p_i\cdot q_j \mbox{ for all incident pairs $v_i\in V$ and $e_j \in L$.}  
\end{eqnarray*}
We say that $(G,p,q)$ is \emph{globally $\mathcal{L}_d$-rigid} if its only equivalent
framework is itself.

The {\em $d$-dimensional linearly constrained rigidity matroid} of a graph $G=(V,E)$ is the matroid $\mathcal{R}_d^{lc}(G)$ on $E\cup L$ in which a set of edges $F\subseteq E\cup L$ is independent whenever the corresponding rows of $R_d(G,p,q)$ are independent, for some (or equivalently every) generic $(p,q)$. 
We denote the rank function of $\mathcal{R}_d^{lc}(G)$ by $r_d^{lc}$ and put $r_d^{lc}(G)=r_d^{lc}(E\cup L)$ and say $G$ is {\em $\mathcal{L}_d$-rigid} if $r_d^{lc}(G)=d|V|$. 
It is easy to see that an independent set $F$ in $\mathcal{R}_d^{lc}(G)$ contains at most $d|V(F)|$ edges.
We also say that $G$ is {\em redundantly $\mathcal{L}_d$-rigid} if $G-f$ is $\mathcal{L}_d$-rigid for all $f\in E\cup L$.

It will be important for us to understand the rank function of $\mathcal{R}_d^{lc}(G)$ in certain cases. To achieve this we will make use of the following characterisation of $\mathcal{L}_d$-rigidity that extended the one in \cite{CGJN}.
Here, for a positive integer $t$, a graph $G=(V,E)$ is \emph{$t$-sparse} if the subgraph $G[X]$ induced by any $X\subset V$ has at most $t|X|$ edges, and \emph{$t$-tight} if it is $t$-sparse and $|E|=t|V|$.

\begin{thm}[\cite{JNT}]\label{thm:d-dimchar}
Let $G=(V,E,L)$ be a looped simple graph such that each vertex is incident with at least $\lfloor \frac{d}{2}\rfloor$ loops. Then $G$ is $\mathcal{L}_d$-rigid if and only if $G$ contains a spanning $d$-tight, $K_{d+2}$-free subgraph with the property that every vertex of $H$ is incident with at least $\lfloor \frac{d}{2}\rfloor$ loops.   
\end{thm}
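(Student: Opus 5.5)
The plan is to prove the two directions separately. Necessity is essentially linear algebra. Sufficiency goes by induction on $|V|$ using inductive constructions, and that is where the real work lies. Throughout, loops count towards the edge total in "$d$-sparse" and "$d$-tight", and $K_{d+2}$ means a copy of the complete simple graph on $d+2$ vertices inside $H-L$.

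\emph{Necessity.} Suppose $G$ is $\mathcal{L}_d$-rigid, so $r_d^{lc}(G)=d|V|$.
\begin{enumerate}
\item For each vertex $v$ choose a set $L_v$ of $\lfloor d/2\rfloor$ loops at $v$. For generic $q$ the rows of these loops are supported on disjoint column blocks and each block has at most $d$ generic vectors, so $\bigcup_v L_v$ is independent in $\mathcal{R}_d^{lc}(G)$.
\item Extend $\bigcup_v L_v$ to a base $B$, and let $H$ be the spanning subgraph with edge set $B$. Then $|B|=d|V|$, and every vertex of $H$ carries at least $\lfloor d/2\rfloor$ loops.
\item For any $X\subseteq V$, the rows of $B$ induced by $X$ are supported on the $d|X|$ columns of $X$, so there are at most $d|X|$ of them. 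Hence $H$ is $d$-tight.
\item Finally, $K_{d+2}$ has $\binom{d+2}{2}>d(d+2)-\binom{d+1}{2}$ edges. It is therefore dependent already in the bar-joint matroid $\mathcal{R}_d$, and so in $\mathcal{R}_d^{lc}$. Since $B$ is independent, $H$ is $K_{d+2}$-free.
\end{enumerate}

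\emph{Sufficiency.} It suffices to show that every $d$-sparse, $K_{d+2}$-free looped simple graph $H$ with at least $\lfloor d/2\rfloor$ loops at each vertex is independent in $\mathcal{R}_d^{lc}$. Tightness then gives rank $d|V|$. I would argue by induction on $|V|$, so the first step is to find a vertex of small simple degree.
\begin{enumerate}
\item Counting gives $2|E|+|L|\le 2d|V|-|L|\le (2d-\lfloor d/2\rfloor)|V|$. Subtracting the contribution of the loops shows that some vertex $v$ has simple degree at most roughly $d+1$, with the exact bound depending on the parity of $d$.
\item If $v$ has total degree at most $d$, delete it. Reversing this step is a $0$-extension, which preserves independence because we can place $p(v)$ and $q$ generically.
\item Otherwise $v$ has total degree $d+1$, or slightly more. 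Delete $v$ and add back either a new edge between two non-adjacent neighbours, or a loop at a neighbour, chosen so that the smaller graph stays $d$-sparse, $K_{d+2}$-free and keeps the loop condition. Then reverse this with a linearly constrained $1$-extension, or its loop variant, which preserves independence by the standard special-position-then-perturb argument. Concretely, put $p(v)$ on the line through the two neighbours, or on the hyperplane of the removed loop, and check the rank.
\end{enumerate}

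\emph{Main obstacle.} The hard step is showing that an admissible reduction at $v$ always exists. The new edge must not violate $d$-sparsity, which means avoiding a tight set containing both endpoints, and must not create a $K_{d+2}$. Here I would use the standard submodularity argument on tight sets: if every candidate pair lies in a tight set, the union of these sets together with $v$ violates the count. The neighbours of $v$ cannot all be pairwise adjacent together with $v$ without forming a $K_{d+2}$, which supplies a non-adjacent pair. In the degenerate cases, where the simple neighbourhood is small, I would instead use a loop-to-neighbour move. These cases are exactly where the hypothesis of $\lfloor d/2\rfloor$ loops at every vertex is essential, because it caps the simple degree and so leaves room to place the new loop or edge.
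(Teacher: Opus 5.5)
Your necessity argument is correct. The $\lfloor d/2\rfloor$ generic loop rows at each vertex are independent, and a base containing them has $d|V|$ elements. The column-support count makes that base $d$-sparse. Finally, $K_{d+2}$, with $d(d+2)-\binom{d+1}{2}+1$ edges, is already dependent in $\mathcal{R}_d$. The paper only quotes this theorem from \cite{JNT} and gives no proof, so I am judging your sufficiency sketch on its own.

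The sufficiency direction has a genuine gap at the degree step, where you say the chosen vertex has total degree ``$d+1$, or slightly more''. Your count bounds only the \emph{simple} degree, by $2\lceil d/2\rceil$. The loops you must keep raise the \emph{total} degree. The best general bound is $2|E|+|L|\le(2d-\lfloor d/2\rfloor)|V|$, which gives some vertex of total degree at most $d+\lceil d/2\rceil$, and this bound can be attained at every vertex:
\begin{itemize}
\item for $d=2m$, take any $2m$-regular simple graph with $m$ loops added at each vertex;
\item for $d=3$, take the octahedron with one loop at each vertex.
\end{itemize}
These graphs are $d$-tight, $K_{d+2}$-free and satisfy the loop hypothesis, yet every vertex has total degree $d+\lceil d/2\rceil$. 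So neither your $0$-extension nor your $1$-extension (edge or loop variant) ever applies. This is also why your scheme is adequate only for $d=2$, where $d+\lceil d/2\rceil=d+1$.

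Deleting a vertex of total degree $d+k$ forces you to put back $k$ edges or loops, and to reverse the step by a $k$-extension with $k$ up to $\lceil d/2\rceil$. For $k\ge 2$ and $d\ge 3$ the special-position argument you invoke does not go through. Take a $k$-loop extension in which $v$ receives $d$ loops. Then $v$ is pinned, and the new edges $vv_1,\dots,vv_k$ act as loops at the $v_j$. Their normals $p(v_j)-p(v)$ are governed by only the $d$ coordinates of $p(v)$, not by $k(d-1)$ free parameters, so they are not in generic position. The $k$-loop extension lemma of \cite{CGJN} (Lemma \ref{lem:ddim-kloop}) compensates by requiring each $v_j$ to carry about $\lceil (k-1)d/k\rceil$ loops. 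For $k\approx d/2$ this is close to $d$, far more than the $\lfloor d/2\rfloor+1$ available here. Showing that such higher-order extensions preserve independence with only $\lfloor d/2\rfloor$ loops per vertex is the real content of the theorem, and your proposal gives no argument for it.

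A smaller gap sits inside your $1$-extension case. Finding a non-adjacent pair $a,b\in N(v)$ does not stop the new edge $ab$ from completing a $K_{d+2}$ with other vertices. The tight-set submodularity argument cannot detect this. For odd $d$, $K_{d+2}$ with exactly $\lfloor d/2\rfloor$ loops per vertex meets the count $d|X|$ with equality, so $K_{d+2}$ minus an edge is not tight. Avoiding cliques therefore needs its own argument.
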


For linearly constrained frameworks we have the following characterisation of global $\mathcal{L}_2$-rigidity.

\begin{thm}[\cite{GJN}]\label{thm:glob_char}
A looped simple graph $G$ is globally $\mathcal{L}_2$-rigid if and only if
\begin{itemize}
\item[](i) each connected component of $G$ is either a single vertex with two loops or is redundantly $\mathcal{L}_2$-rigid, and
\item[](ii) for all $X\subseteq V(G)$ with $|X|=2$, each connected component of $G-X$ has at least one loop.\footnote{The condition (ii) in Theorem \ref{thm:glob_char} is called ``2-balancedness'' (or ``balancedness'' for short) in \cite{GJN}. Since we reserve the term ``balancedness'' for a different concept, we wrote this condition explicitly in order not to cause any confusion.}
\end{itemize}
\end{thm}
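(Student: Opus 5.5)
The plan is to prove necessity by elementary geometric constructions, and sufficiency by an inductive construction paralleling Jackson and Jord\'an's proof of the bar-joint characterisation of global 2-rigidity.

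\textbf{Necessity.} Fix a generic $(G,p,q)$. First suppose (ii) fails: some $X=\{x,y\}$ is such that a component $C$ of $G-X$ carries no loop. Reflect $p|_C$ in the line through $p(x)$ and $p(y)$ and keep all other points fixed. Every edge inside $C$ or between $C$ and $X$ keeps its length. No loop constraint involves $C$. This gives an equivalent, distinct framework, so $G$ is not globally $\mathcal{L}_2$-rigid.

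Next suppose a component is neither a single vertex with two loops nor redundantly $\mathcal{L}_2$-rigid. If it is not even $\mathcal{L}_2$-rigid, a nontrivial flex already gives nearby equivalent frameworks. Otherwise some $f\in E\cup L$ has $G-f$ not rigid, and we run Hendrickson's argument. Since $(p,q)$ is generic, the configuration space of $(G-f,p,q)$ near $p$ is a smooth 1-manifold. It is compact, because every component of $G-f$ contains a loop (by (ii) or the rigidity of $G$), so the linear constraints together with bounded edge lengths confine it. A smooth function, namely the length of $f$ (or $q_f\cdot p_v$ when $f$ is a loop), on a closed curve attains the generic value $f(p)$ at a second point. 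That point is an equivalent, non-identical framework.

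\textbf{Sufficiency.} We induct on $|V|+|E|+|L|$. First we establish that the following operations preserve global $\mathcal{L}_2$-rigidity:
\begin{itemize}
\item[(a)] adding an edge or loop;
\item[(b)] the $1$-extension (delete an edge $uv$, add a new vertex $z$ joined to $u$, $v$ and a third vertex $w$);
\item[(c)] its looped analogues: delete an edge $uv$ and add $z$ adjacent to $u,v$ with a loop at $z$; or delete a loop at $u$ and add $z$ adjacent to $u$ with two loops, or with one loop and one further edge.
\end{itemize}
Operation (a) is trivial. For (b) and (c) we follow Jackson--Jord\'an: take a generic globally rigid realisation of the smaller graph and place $z$ on the line (or on the constrained line) determined by the deleted constraint. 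Redundant rigidity of the extension, together with a limiting argument over sequences of equivalent frameworks, then shows that no equivalent realisation of the new graph exists. This uses the fact that the rigidity of $G-z$ pins down $p(u)$ and $p(v)$.

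\textbf{Main obstacle.} The combinatorial reduction is the hard part. We must show that every graph satisfying (i) and (ii) that is minimal, in the sense that deleting any edge or loop destroys (i) or (ii), has a vertex of degree $3$ in $\mathcal{R}^{lc}_2$ (counting loops) whose removal via an inverse operation from (b) or (c) preserves both conditions. The base cases are a single vertex with two loops and small looped triangles. I would take the approach of the \emph{ear decomposition} of the circuit structure: use the rank function of $\mathcal{R}^{lc}_2$ from \cite{ST} (count $|F|\le 2|V(F)|$ with the appropriate loopless correction) to show that a redundantly rigid minimal graph is a union of $\mathcal{R}^{lc}_2$-circuits with many degree-3 vertices. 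Then I would do a case analysis showing that some degree-3 vertex admits an admissible splitting. For that vertex, the reduced graph must stay redundantly rigid and must satisfy condition (ii), so every relevant $2$-separator has to be checked. The delicate cases are those where a $2$-vertex separator isolates a loopless piece after the splitting. I expect these to require choosing the vertex in an extremal ``end'' of a $2$-separation, as in the $3$-connected bar-joint proof.
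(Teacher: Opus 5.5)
The paper gives no proof of this statement; it is quoted from \cite{GJN}. There is therefore nothing in the paper to compare against, and your proposal has to be judged on its own.

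\textbf{Necessity.} This is the standard reflection-plus-Hendrickson argument and is essentially sound, with one slip. ``Every component of $G-f$ contains a loop'' does not give compactness. A single loop only confines its vertex to an unbounded line, so a component carrying exactly one loop can translate along that line. The repair is a rank count. Since $r^{lc}_2(G-f)=2|V|-1$, and a loopless set on $n\geq 2$ vertices has rank at most $2n-3$, every component of $G-f$ with at least two vertices carries at least two loops. Two loops on generic, non-parallel lines do bound the configuration space. The only exception is a single vertex $v$ with one loop, attached to the rest by $f=vw$. In that case the slider line of $v$ meets the circle of radius $\|p(v)-p(w)\|$ about $p(w)$ in a second point, which gives the equivalent framework directly.

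\textbf{Sufficiency: the genuine gap.} The core of this direction is the combinatorial statement you label the main obstacle, and you only announce it. You would need to show that every graph satisfying (i) and (ii), minimal under deletion of edges and loops, has a vertex of degree $3$ (counting loops) admitting the inverse of one of (b) or (c). The smaller graph must remain redundantly $\mathcal{L}_2$-rigid and still satisfy (ii), and the process must terminate in an explicit list of base graphs.

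This cannot be transplanted from Jackson--Jord\'an, whose reduction theorem lives in the class of $3$-connected, $\mathcal{R}_2$-connected graphs. Condition (ii) allows cut vertices and $2$-separators, provided every side carries a loop. So the following all have to be established from scratch:
\begin{itemize}
\item the existence of degree-$3$ vertices in minimal graphs;
\item the choice of an extremal vertex;
\item the admissibility analysis for the three kinds of degree-$3$ vertex (three edges; two edges and a loop; one edge and two loops);
\item the fact that the operations you list suffice.
\end{itemize}
On the first point: a single circuit $C$ containing loops is easy, since $|C|=2|V(C)|+1$ and $C$ has at least three loops, forcing average degree below $4$. But minimal graphs are unions of circuits glued along such separators, and there the count says nothing. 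Without this lemma the induction does not close, so sufficiency remains unproved.

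\textbf{A smaller point in (b)--(c).} When the deleted constraint is a loop at $u$ with normal $q$, the degenerate position must place $z$ on the line through $p(u)$ in direction $q$. Then the row of $zu$ at $u$ is parallel to $q$ and reproduces the deleted loop. Placing $z$ on the constrained (slider) line itself gives the perpendicular row, and the limiting argument breaks.
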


We will also need the following operation on graphs.
Let $H=(V,E,L)$ be a looped simple graph, $d\geq 1$ and $0\leq k\leq d$ be integers. The {\em $d$-dimensional $k$-loop extension} operation forms a new graph $G$ from $H$ by deleting $k$ loops incident to distinct vertices of $H$ and adding a new vertex $v$ and $d + k$ new edges and loops incident to $v$, provided that at least k loops are added at $v$ and exactly one new edge is added from $v$ to each of the end-vertices of the $k$ deleted loops.

\begin{lem}[\cite{CGJN}]\label{lem:ddim-kloop}
Suppose that $G$ is obtained from $H$ by a $d$-dimensional $k$-loop extension operation which deletes a loop $l_j$ at $k$ distinct vertices $v_j$, $1 \leq  j\leq  k$, of $H$ and adds a
new vertex $v$. Suppose further that $H$ is $\mathcal{L}_d$-rigid
and that $v_j$ is incident with at least $\lceil \frac{(k-1)d}{k}\rceil$ loops in $H$ for all $1 \leq j \leq k$ when $k\leq  2$. Then $G$ is $\mathcal{L}_d$-rigid.
\end{lem}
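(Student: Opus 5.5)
The plan is to work with a single carefully chosen generic realisation of $H$ and show that the corresponding realisation of $G$ is infinitesimally rigid. Since rank is lower semicontinuous, it is enough to exhibit one realisation of $G$ whose rigidity matrix has rank $d|V(G)|$.

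\textbf{Case $k=0$.} This is immediate. The $d$ new rows are the only rows with nonzero entries in the columns of $v$, so the matrix is block triangular. The $d\times d$ block at $v$ consists of generic normals of loops, or generic difference vectors $p(v)-p(u)$ for edges, and so is nonsingular. The rank therefore increases by exactly $d$.

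\textbf{Case $k\geq 1$: the special position.} The idea is to make each new edge $vv_j$ play the role of the deleted loop $l_j$.
\begin{itemize}
\item Choose $p(v)$ generic. Choose $p$ on $V(H)$ and $q$ on the loops of $H$ other than the $l_j$ generic and independent of $p(v)$.
\item Define $q(l_j)=p(v)-p(v_j)$ for $1\leq j\leq k$.
\end{itemize}
Because $p(v)$ is a free generic point, the coordinates of $(p|_{V(H)},q)$ are still algebraically independent. Hence $(H,p,q)$ is a generic realisation of $H$, and it is infinitesimally rigid since $H$ is $\mathcal{L}_d$-rigid.

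\textbf{Case $k\geq 1$: the motion argument.} Let $\dot p$ be an infinitesimal motion of $(G,p,q)$. The edge $vv_j$ gives
$$q(l_j)\cdot\dot p(v_j)=q(l_j)\cdot\dot p(v).$$
So once we know $\dot p(v)=0$, every edge constraint $vv_j$ becomes exactly the loop constraint of $l_j$. Then $\dot p|_{V(H)}$ is an infinitesimal motion of $(H,p,q)$ and hence zero. This reduces everything to forcing $\dot p(v)=0$.

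\textbf{The main obstacle.} Forcing $\dot p(v)=0$ is where I expect the real work to lie. It is easy when the $d$ remaining new elements at $v$ are all loops, since their generic normals span $\mathbb{R}^d$. The difficulty comes when some of them are edges $vu$ to vertices $u$ outside $\{v_1,\dots,v_k\}$. Those rows also involve $\dot p(u)$, so $\dot p(v)$ is not isolated. The argument must then combine the constraints at $v$ with the rigidity of $H$ minus the deleted loops.

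What I would try first is to count the dimension of the space of motions of $H-\{l_1,\dots,l_k\}$. This space has dimension at most $k$, and it is parametrised by the values $q(l_j)\cdot\dot p(v_j)$. One then shows that the linear map sending such a motion, together with $\dot p(v)$, to the $d+k$ new constraint values is injective for generic $p(v)$. Establishing that injectivity is a determinant computation that is nonzero for some choice of $p(v)$.

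\textbf{Where the loop hypothesis enters.} For $k\leq 2$ the map above can degenerate. For instance, with $k=2$ the extension behaves like an edge split, and a vertex $v_j$ with too few loops may retain a free direction orthogonal to everything. Here I would use the hypothesis that $v_j$ carries at least $\lceil (k-1)d/k\rceil$ loops in $H$. These loops restrict the velocity $\dot p(v_j)$ to a subspace of dimension at most $d-\lceil (k-1)d/k\rceil$. For $k=2$ this is at most $\lfloor d/2\rfloor$. For $k=1$ the hypothesis is vacuous, and the case is direct because $l_1$ is replaced by a single edge to a new vertex constrained by $d-1$ further elements. The bound on $\dot p(v_j)$ then makes the relevant determinant nonvanishing.

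For $k\geq 3$ I expect genericity of the new edge directions alone to suffice. The $k$ difference vectors $p(v)-p(v_j)$ together with the remaining constraints at $v$ should give full rank directly.
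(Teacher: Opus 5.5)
\textbf{The genericity claim fails for $k\ge 2$.} You assert that $(H,p,q)$ with $q(l_j)=p(v)-p(v_j)$ is generic. That is true for $k=1$. For $k\ge 2$ it is false: the $kd$ coordinates of $q(l_1),\dots,q(l_k)$ are functions of the $d$ coordinates of $p(v)$; for instance $q(l_1)-q(l_2)=p(v_2)-p(v_1)$. Geometrically, the $k$ lines through the $p(v_j)$ with directions $q(l_j)$ are forced to be concurrent at $p(v)$. So the step ``it is infinitesimally rigid since $H$ is $\mathcal{L}_d$-rigid'' is unjustified, and it can genuinely fail.

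Take $d=k=3$. Let $H$ be $K_4$ on $\{x,v_1,v_2,v_3\}$ with three loops at $x$ and one loop $l_j$ at each $v_j$. Generically $H$ is $\mathcal{L}_3$-rigid: the loops at $x$ force $\dot p(x)=0$, which leaves $\dot p(y)=\omega\times(p(y)-p(x))$, and each $l_j$ imposes $\omega\cdot\big((p(v_j)-p(x))\times q(l_j)\big)=0$, giving three independent conditions. Let $G$ be the $3$-loop extension that adds $v$ with edges $vv_1,vv_2,vv_3$ and three loops. In any generic realisation, set $\dot p(x)=\dot p(v)=0$ and $\dot p(v_j)=\omega\times(p(v_j)-p(x))$ with $\omega=p(v)-p(x)$. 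This is a nonzero motion of $G$: it is a rotation about the line through $p(x)$ and $p(v)$, and $\omega\times(p(v_j)-p(x))=\omega\times(p(v_j)-p(v))$ is orthogonal to $p(v_j)-p(v)$. The same velocities give a motion of your special realisation of $H$. Here each $v_j$ carries one loop, fewer than $\lceil (k-1)d/k\rceil=2$. So your expectation that genericity alone suffices for $k\ge 3$ is false: whatever the intended reading of ``when $k\le 2$'', the conclusion needs the loop hypothesis for $k\ge 3$ as well. Its role is to make the concurrent position rigid, not to force $\dot p(v)=0$.

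\textbf{What you flag as the main obstacle is easy.} At least $k$ loops are added at $v$, so give $k$ of them the normals $p(v)-p(v_j)$. For any motion $\dot p$ of $G$, that loop together with the edge $vv_j$ gives $(p(v)-p(v_j))\cdot\dot p(v_j)=0$, which is exactly the constraint of $l_j$. Hence $\dot p|_{V(H)}$ is a motion of your special realisation of $H$, without first knowing $\dot p(v)=0$. Once that realisation is rigid, $\dot p(v)$ is orthogonal to $d$ vectors: the $p(v)-p(v_j)$, the $p(v)-p(u)$ for the other neighbours $u$, and the remaining generic normals. These are independent for generic $p(v)$. Everything therefore reduces to showing that $H$ is rigid when $q(l_j)=P-p(v_j)$ for generic $P$.

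\textbf{Where $\lceil (k-1)d/k\rceil$ is used.} In a generic realisation of $H$, let $Q_j$ be the span of the normals of the loops at $v_j$ other than $l_j$. The affine subspace $A_j=p(v_j)+\mathrm{span}(q(l_j))+Q_j$ has codimension at most $d-\lceil (k-1)d/k\rceil=\lfloor d/k\rfloor$. These codimensions sum to at most $d$ and the $A_j$ are in general position, so they share a point $P$. Write $P-p(v_j)=\lambda_j q(l_j)+\rho_j$ with $\rho_j\in Q_j$; generically $\lambda_j\neq 0$. Every motion has $\dot p(v_j)\perp Q_j$, so replacing $q(l_j)$ by $P-p(v_j)$ leaves the motion space unchanged. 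Hence the concurrent position is rigid for this $P$, and so for generic $P$, since rigidity is a Zariski-open condition.

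Your bound on the space available to $\dot p(v_j)$ is the right ingredient, but it has to be used in this way. The injectivity/determinant computation you sketch is neither carried out nor needed. With the loop-specialisation observation, your argument does settle $k=1$.
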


\section{Characterising The Rank Function}\label{sec:rank_func}

In this section we prove the key technical tool we require for the proof of Theorem \ref{thm:d_geq2_main}. This is Theorem \ref{thm:rank_cover-both} below which characterises the rank function of $\mathcal{R}_d^{lc}(G)$ under the additional hypotheses on the loops at each vertex required for us to be able to apply Theorem \ref{thm:d-dimchar}. First we recall the concept of a polymatroid.

\subsection{Polymatroids}
A {\em polymatroid} is a pair $(E,r)$ where $E$ is a finite set and $r:2^E\rightarrow\bZ$ is a function satisfying the following properties
\begin{itemize}
    \item $r$ is {\em submodular}, that is $r(X)+r(Y)\geq r(X\cup Y)+r(X\cap Y)$ for all $X,Y\subseteq E$,
    \item $r$ is {\em monotone}, that is $r(X)\leq r(Y)$ for all $X\subseteq Y$,
    \item $r$ is {\em normalized}, that is $r(\emptyset)=0$.
\end{itemize}

The next classical result of Edmonds and Rota explains the relation between a polymatroid and a matroid.

\begin{thm}[\cite{EdRo}]\label{thm:polymatroid}
    For a polymatroid $(E,f)$, the collection
    $$
    \I_f:=\{F\subseteq E: |I|\leq f(I)\text{ for all } I\subseteq F\}
    $$
    is the collection of independent sets of a matroid on $E$ with the rank function
    $$
    r_f(F)=\min\{|F\setminus I|+f(I):I\subseteq F\}.
    $$
\end{thm}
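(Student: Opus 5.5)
The plan is to verify the matroid axioms directly for $\I_f$, and then obtain the rank formula from a min--max argument built on submodularity. Note first that $\emptyset\in\I_f$ because $f(\emptyset)=0$. Also, if $F\in\I_f$ and $F'\subseteq F$, then every $I\subseteq F'$ is a subset of $F$, so $|I|\leq f(I)$ and hence $F'\in\I_f$. What remains is the augmentation property. I would not check it directly. Instead I will show that for every $F\subseteq E$, all maximal members of $\I_f$ contained in $F$ have the same size, namely $\rho(F):=\min\{|F\setminus I|+f(I):I\subseteq F\}$. This gives augmentation and the rank formula together.

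The upper bound is easy. Let $J\in\I_f$ with $J\subseteq F$, and let $I\subseteq F$ be arbitrary. Then $|J|=|J\setminus I|+|J\cap I|\leq |F\setminus I|+f(J\cap I)\leq |F\setminus I|+f(I)$. The middle step uses independence of $J$ applied to $J\cap I$, and the last step uses monotonicity. So $|J|\leq\rho(F)$.

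For the matching lower bound, fix a maximal $J\in\I_f$ with $J\subseteq F$; I need some $I$ that achieves $|F\setminus I|+f(I)=|J|$. Call a set $X\subseteq J$ \emph{tight} if $f(X)=|X|$; the empty set is tight. The key lemma is that tight sets are closed under union and intersection. For tight $X,Y\subseteq J$, submodularity gives $|X|+|Y|=f(X)+f(Y)\geq f(X\cup Y)+f(X\cap Y)$. Since $J$ is independent, the right-hand side is at least $|X\cup Y|+|X\cap Y|=|X|+|Y|$, so equality holds throughout and both $X\cup Y$ and $X\cap Y$ are tight. Consequently there is a unique maximal tight set $T\subseteq J$.

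Now take any $e\in F\setminus J$. By maximality $J+e\notin\I_f$, so some $I\subseteq J+e$ has $|I|>f(I)$. Such an $I$ must contain $e$, since $J$ is independent. Then $f(I-e)\geq |I|-1\geq f(I)\geq f(I-e)$ by monotonicity, so $I-e$ is tight and $f(I)=f(I-e)$; in particular $I-e\subseteq T$.

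I then set $C:=T\cup(F\setminus J)$ and aim to show $f(C)=f(T)=|T|$. This is the step I expect to be the main obstacle. My approach is to add the elements $e$ of $F\setminus J$ one at a time and use submodularity with the witness $I_e$: $f(T\cup A\cup\{e\})+f(I_e-e)\leq f(T\cup A)+f(I_e)$, where $(T\cup A\cup\{e\})\cap I_e=I_e-e$ because $I_e-e\subseteq T$. Since $f(I_e)=f(I_e-e)$, this gives $f(T\cup A\cup\{e\})\leq f(T\cup A)$, and monotonicity gives the reverse inequality, so each added element leaves $f$ unchanged. By induction $f(C)=f(T)=|T|$. Therefore $|F\setminus C|+f(C)=|J\setminus T|+|T|=|J|$, which proves $|J|=\rho(F)$.

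This equicardinality of maximal independent subsets of every $F$ is a standard equivalent form of the matroid axioms, so $\I_f$ is a matroid, and its rank function is $r_f=\rho$ as stated.
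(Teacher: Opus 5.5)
Your proof is correct. The paper gives no proof of this statement. It is quoted from Edmonds and Rota and used as a black box, for $f_0$ and for the Dilworth truncation $f_1^D$ in Theorem~\ref{thm:edmonds-both}, so there is no internal argument to compare against. Your route is a standard self-contained proof that gives augmentation and the rank formula at once. It has three steps: the upper bound by splitting $J$ along $I$; the lattice of tight subsets of $J$ with its unique maximal member $T$; and absorbing $F\setminus J$ into $T$ without increasing $f$.

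Three small points should be fixed or made explicit.

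\textbf{The intersection identity.} In the absorption step, the identity $(T\cup A\cup\{e\})\cap I_e=I_e-e$ is false as written, since the left-hand side contains $e$. Your displayed inequality is submodularity for the pair $T\cup A$ and $I_e$. The identities it actually needs are:
\begin{itemize}
\item $(T\cup A)\cup I_e=T\cup A\cup\{e\}$, because $I_e-e\subseteq T$;
\item $(T\cup A)\cap I_e=I_e-e$, because $e\notin T\cup A$ and $A\cap I_e\subseteq A\cap(J\cup\{e\})=\emptyset$.
\end{itemize}

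\textbf{Integrality.} The passage from $|I|>f(I)$ to $|I|-1\geq f(I)$ uses that $f$ is integer-valued. This is part of the paper's definition of a polymatroid, but you should say so, because for real-valued $f$ the rank formula fails. For example, take $f(X)=\min\{|X|,3/2\}$ with $|E|\geq 2$: then $\I_f$ has rank $1$, but the minimum in the formula is $3/2$.

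\textbf{Full submodularity.} Both of your uses of submodularity can involve disjoint pairs: two disjoint tight sets, or $I_e=\{e\}$ when $f(\{e\})=0$. So the argument uses full rather than merely intersecting submodularity. This is consistent with the paper having to replace $f_1$ by the submodular function $f_1^D$ before invoking the theorem.
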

The matroid $(E,\I_f)$ in Theorem \ref{thm:polymatroid} is called the {\em matroid induced by} $f$.

\subsection{The Rank Function}
Given a looped simple graph $G=(V,E,L)$, let us define two functions $f_i:2^{E\cup L}\rightarrow \bZ$, $i\in \{0,1\}$ by
$$
f_0(T):=t|V(T)|
$$
and
$$
f_1(T):=\begin{cases}
|T|, & T\subseteq E \text{ and } |V(T)|\leq 2t\\
t|V(T)|-1, & T\subseteq E \text{ and } |V(T)|= 2t+1\\
t|V(T)|,& (T\subseteq E \text{ and }|V(T)|\geq 2t+2) \text{ or }T\cap L\neq \emptyset
\end{cases}
$$
where $t>i$ is an integer. 

\begin{rem}
The reason we will require two functions $f_i$, $i=0,1$ arises from the observations earlier that independent sets in $\mathcal{R}_d(G)$ (resp. $\mathcal{R}_d^{lc}(G)$) contains at most $d|V(F)|-\binom{d+1}{2}$ edges (resp. at most $d|V(F)|$ edges).
However when $d=2t-1$, for a set $X\subset V$ with $|X|=2t+1=d+2$ we have
$$d|X|-{d+1\choose 2}+1={d+2\choose 2}={2t+1\choose 2}=t(2t+1)=t|X|.$$
That is, in this special case the sparsity count for $\mathcal{R}_d^{lc}(G)$ does not imply the corresponding sparsity count for $\mathcal{R}_d(G)$. However, this is not the case when $d\geq 2t$. In words perhaps more familiar to rigidity theorists, the edge set of $K_{d+2}$ is a circuit in $\mathcal{R}_d(G)$ while having at most $d|V(F)|$ edges. But such phenomena does not occur when $d\geq 2t$ by Theorem \ref{thm:d-dimchar}. Thus we need to use $f_1$ when $d=2t-1$ and $f_0$ when $d\geq 2t$.
\end{rem}

We shall obtain matroids from $f_i$, $i=0,1$ using Theorem \ref{thm:polymatroid}. It is easy to see that $(E\cup L, f_0)$ is a polymatroid and hence $f_0$ induces a matroid by Theorem \ref{thm:polymatroid}. However, $(E\cup L, f_1)$ does not form a polymatroid so we cannot directly get a matroid from $f_1$. The main reason for this is $f_1$ is not (intersecting) submodular.\footnote{A function is called {\em intersecting submodular} if the submodularity condition holds for pairs of sets with non-empty intersection. If $f_1$ were intersecting submodular
we could use Edmonds' result \cite{Edm} in order to get a matroid. See, for example \cite[Theorem~13.4.2]{AF} for the statement of Edmonds' theorem specifically for intersecting submodular functions.}
While each of the three pieces of $f_1$ is submodular, it is not difficult to see that submodularity fails when we take sets $X,Y$ for which different pieces of $f_1$ apply. To see that intersecting submodularity fails take a graph in which $A$ is the edge set of a subgraph isomorphic to $K_{2t+1}$ and let $B$ be the edge set of a subgraph isomorphic to $P_3$ such that one of its edges is in $A$ and the other is not in $A$. Then $A\cap B\neq \emptyset$ and we have $$f_1(A)+f_1(B)=(2t^2+t-1)+2=2t^2+t+1 < (2t^2+2t)+1=f_1(A\cup B)+f_1(A\cap B)$$
since $t$ is a positive integer.
However, for pairs of sets with sufficiently large intersections $f_1$ behaves like a submodular function. In order to deal with small sets, we will use the Dilworth truncation $f_1^D$ of $f_1$ and turn it into a submodular function. To this end, let us define $f_1^D:2^{E\cup L}\rightarrow \bZ$,
$$
f_1^D(T):=\min\big\{\sum_{j=1}^kf_1(T_j):\{T_1\ldots,T_k\} \text{ is a partition of }T\big\}.
$$
First we prove a useful lemma.
\begin{lem}\label{lem:submodular_intersecting_large}
Suppose $T_1,T_2\subseteq E\cup L$ are sets, each of which spans at least $2t+1$ vertices or contains a loop and $T_1\cap T_2\neq \emptyset$.  Then $f_1(T_1)+f_1(T_2)\geq f_1(T_1\cup T_2)+f_1(T_1\cap T_2)$.
\end{lem}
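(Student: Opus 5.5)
Throughout, write $g(T)=t|V(T)|$. The plan is to compare $f_1$ with $g$, which is submodular because $T\mapsto|V(T)|$ is a coverage function and hence $|V(T_1)|+|V(T_2)|\geq |V(T_1\cup T_2)|+|V(T_1\cap T_2)|$. Note that $f_1\le g$ always. Under the hypothesis on $T_1,T_2$ we have $f_1(T_i)=g(T_i)-\epsilon_i$, where $\epsilon_i=1$ exactly when $T_i\subseteq E$ and $|V(T_i)|=2t+1$, and $\epsilon_i=0$ otherwise.

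Since $T_1\cup T_2$ spans at least $2t+1$ vertices or contains a loop, $f_1(T_1\cup T_2)=g(T_1\cup T_2)-\epsilon$, with $\epsilon=1$ iff $T_1\cup T_2\subseteq E$ and $|V(T_1\cup T_2)|=2t+1$. The inequality to prove therefore reduces to
\[
g(T_1)+g(T_2)-g(T_1\cup T_2)-f_1(T_1\cap T_2)\;\geq\; \epsilon_1+\epsilon_2-\epsilon .
\]
Write $I=T_1\cap T_2\neq\emptyset$ and $S=g(T_1)+g(T_2)-g(T_1\cup T_2)$. By submodularity of $|V(\cdot)|$ we have $S\geq t|V(I)|$. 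The left side is then at least $t|V(I)|-f_1(I)$, which is a nonnegative quantity $\delta$. This already settles the case $\epsilon_1=\epsilon_2=0$. It also settles every case where at most one $\epsilon_i$ equals $1$ and we can find one unit of slack.

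The remaining work is a case analysis on $\epsilon_1,\epsilon_2$.

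Case $\epsilon_1=\epsilon_2=1$. Both $T_i$ are edge sets on exactly $2t+1$ vertices.
\begin{itemize}
\item If $V(T_1)=V(T_2)$, then the union is also an edge set on $2t+1$ vertices, so $\epsilon=1$. We need $S-f_1(I)\geq 1$, where $S=t(2t+1)$. Since $I\subseteq E$ spans at most $2t+1$ vertices, $f_1(I)\le t(2t+1)-1$, which gives the required bound.
\item If $V(T_1)\neq V(T_2)$, then $\epsilon=0$ and we need slack $2$. Since $I\ne\emptyset$, $I$ spans $k$ vertices with $2\le k\le 2t$ (it lies in $V(T_1)\cap V(T_2)$, a proper subset of $V(T_1)$), so $f_1(I)=|I|\le\binom{k}{2}$. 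Let $m=|V(T_1)\cap V(T_2)|\ge k$. Then $S=tm$, and we need $tm-|I|\ge 2$. This holds because $tk-\binom{k}{2}=k(2t-k+1)/2\ge 2$ for $2\le k\le 2t$; at $k=2t$ the value is $t\ge 1$, and when $t=1$ we get $k=2$ and slack $1$ only.
\end{itemize}
I expect this last case to be the main obstacle: it is the only place where the slack is tight. When $t=1$ and $k=2$, the sets $T_1,T_2$ are triangles sharing an edge, and the union $K_4^-$ has $5$ edges on $4$ vertices. Then $f_1(T_1)+f_1(T_2)=2+2=4$, while $f_1(T_1\cup T_2)+f_1(I)=4+1=5$, so the inequality genuinely fails. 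I would therefore check the intended range of $t$ (the paper takes $t>i=1$, so $t\ge2$). With $t\ge 2$ the bound $k(2t-k+1)/2\ge 2$ holds for all $2\le k\le 2t$, and I would state that use explicitly.

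Case exactly one $\epsilon_i=1$, say $\epsilon_1=1$. Then $T_1\subseteq E$ with $|V(T_1)|=2t+1$, and we need $S-f_1(I)\geq 1-\epsilon$.
\begin{itemize}
\item If $I$ spans at most $2t$ vertices, then $f_1(I)=|I|\le\binom{k}{2}$. Since $S\ge tk$, the bound $tk-\binom{k}{2}\ge1$ gives the required slack.
\item If $I$ spans $2t+1$ vertices, then $V(I)=V(T_1)$ and $f_1(I)=g(I)-1$, which directly gives slack $1$.
\end{itemize}
Routine bookkeeping of the loop cases (where $\epsilon_i=0$ automatically) completes the proof.
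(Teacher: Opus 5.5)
Your proof is correct and follows essentially the same route as the paper. Both write each $f_1(T_i)$ as $t|V(T_i)|$ minus a $0/1$ correction, use $|V(T_1)|+|V(T_2)|\ge |V(T_1\cup T_2)|+|V(T_1\cap T_2)|$, and then absorb the corrections; your case analysis on $\epsilon_1,\epsilon_2$ (via $tk-\binom{k}{2}\ge 2$ for $2\le k\le 2t$ when $t\ge 2$) spells out exactly the step the paper compresses into ``follows from $t>1$ and $T_1\cap T_2\neq\emptyset$'', and your two-triangles example correctly shows why $t\ge 2$ is needed.
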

\begin{proof}
We may write $f_1(T_1)=t|V(T_1)|-a$ and $f_1(T_2)=t|V(T_2)|-b$ with $a,b\in\{0,1\}$. By symmetry, may assume $a\leq b$. Then keeping $t>1$ in mind, we obtain
\begin{align*}
    f_1(T_1)+f_1(T_2)=& t|V(T_1)|-a+t|V(T_2)|-b\\
                     =& t|V(T_1)\cup V(T_2)|-a+t|V(T_1)\cap V(T_2)|-b\\
                  \geq& t|V(T_1\cup T_2)|-a+t|V(T_1\cap T_2)|-b\\
                  \geq& f_1(T_1\cup T_2)+f_1(T_1\cap T_2)
\end{align*}
where the penultimate inequality follows from the fact that $|V(T_1)\cap V(T_2)|\geq |V(T_1\cap T_2)|$. The final inequality follows trivially when $b=0$, and for $b=1$ it follows from the facts that $t>1$ and $T_1\cap T_2\neq \emptyset$.
\end{proof}
Dunstan \cite{Dunstan} showed that the Dilworth truncation $f^D$ of a set function $f$ gives a submodular function provided that $f$ is (intersecting) submodular. The next lemma states that $f_1^D$ is submodular.
Although our statement can be obtained via Dunstan's original proof technique our function $f_1$ is not (intersecting) submodular so we include a full proof for the sake of completeness.

\begin{lem}\label{lem:submodular-f1}
The function $f_1^D$ defined above is submodular.
\end{lem}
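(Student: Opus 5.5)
We prove submodularity of $f_1^D$ by the classical uncrossing argument behind Dunstan's theorem, using Lemma \ref{lem:submodular_intersecting_large} in place of intersecting submodularity. The small sets, on which $f_1$ fails to be submodular, are handled by noting that they can always be split into singletons at no cost.

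Fix $X,Y\subseteq E\cup L$ and choose optimal partitions $\mathcal{P}=\{X_1,\dots,X_k\}$ of $X$ and $\mathcal{Q}=\{Y_1,\dots,Y_m\}$ of $Y$, so that $f_1^D(X)=\sum f_1(X_i)$ and $f_1^D(Y)=\sum f_1(Y_j)$. We may normalise these partitions as follows.

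\begin{itemize}
\item[](a) For $T\subseteq E$ with $|V(T)|\le 2t$ we have $f_1(T)=|T|=\sum_{e\in T}f_1(\{e\})$, since a single edge spans $2\le 2t$ vertices. So any such block can be replaced by its singletons without changing the sum.
\item[](b) Hence every block is either a singleton edge, or spans at least $2t+1$ vertices, or contains a loop. We call blocks of the last two kinds \emph{large}.
\item[](c) Singletons cause no trouble: a singleton block is contained in or disjoint from any set, and $f_1(\{e\})\le f_1(T)$-type inequalities are not needed for it.
\end{itemize}

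The main step is to uncross the family $\mathcal{P}\cup\mathcal{Q}$, viewed as a multiset of blocks covering each element of $X\cap Y$ twice and each element of $X\triangle Y$ once. While two blocks $A,B$ are crossing (intersecting, neither containing the other), we replace them by $A\cup B$ and $A\cap B$. This preserves the multiplicity with which each element is covered.

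Two things must be checked at each replacement. First, the sum does not increase. Crossing blocks cannot be singletons, so by (b) both are large, and Lemma \ref{lem:submodular_intersecting_large} applies since $A\cap B\neq\emptyset$. Second, blocks that are not large can be re-split by (a) whenever they arise. A non-large $A\cap B$ is replaced by singletons at equal cost. $A\cup B$ is always large because it contains a large set.

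Termination follows from the standard potential argument: $\sum|S|^2$ over the blocks strictly increases, and splitting into singletons only occurs for the new intersection. To be safe, we can take the potential lexicographically, or instead choose among all minimal configurations one maximising $\sum |S|^2$.

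After uncrossing, the family is laminar and covers $X\cap Y$ twice and $X\triangle Y$ once. In a laminar family with this multiplicity pattern, the maximal members partition $X\cup Y$. The remaining members, once the maximal ones are removed, cover each element of $X\cap Y$ exactly once and nothing else, so they partition $X\cap Y$. Therefore the total is at least $f_1^D(X\cup Y)+f_1^D(X\cap Y)$, as required.

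The main obstacle is ensuring the hypothesis of Lemma \ref{lem:submodular_intersecting_large} holds at every uncrossing step. This relies on step (a), which keeps all non-singleton blocks large; without it, small blocks can cross large ones and break the inequality, exactly as in the $K_{2t+1}$ and $P_3$ example. The bookkeeping for termination with re-splitting also needs care, but it is routine.
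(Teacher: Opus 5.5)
Your uncrossing route differs from the paper's, and its local steps are sound. After normalisation every non-singleton block is large, singletons never cross, and Lemma \ref{lem:submodular_intersecting_large} applies to every crossing pair. Also $A\cup B$ stays large, a small $A\cap B$ can be re-split at no cost, and the final laminar count is correct (keep one copy of each maximal set if blocks repeat). The step that fails as written is termination.

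Re-splitting can make $\sum|S|^2$ decrease: the net change is $2|A\setminus B|\,|B\setminus A|-|A\cap B|^2+|A\cap B|$. For example, take $t=2$ and let $G$ be a $K_4$ on $\{1,2,3,4\}$ together with all edges from $5$ and from $6$ to $1,2,3$. Let $X,Y$ be the edge sets induced by $\{1,2,3,4,5\}$ and $\{1,2,3,4,6\}$.
\begin{itemize}
\item[] Then $|X|=|Y|=f_1(X)=f_1(Y)=f_1^D(X)=f_1^D(Y)=9$, so $\{X\}$ and $\{Y\}$ are legitimate optimal partitions.
\item[] $X\cap Y=E(K_4)$ is small, and Lemma \ref{lem:submodular_intersecting_large} holds with equality ($9+9=12+6$).
\item[] Nevertheless $\sum|S|^2$ drops from $162$ to $150$.
\end{itemize}
Your extremal fallback fails on the same example. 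A configuration's total is $18+\sum(f_1(S)-|S|)$ over its large blocks, and each summand is non-negative here, so every large block of a configuration with total at most $18$ satisfies $f_1(S)=|S|$. One checks from this that $\{X,Y\}$ itself maximises $\sum|S|^2$ among such configurations, yet it is not laminar.

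Your lexicographic fallback does work once made precise. Compare configurations by their list of block sizes sorted in non-increasing order, lexicographically. A step removes $A$ and $B$ and inserts $A\cup B$, which is strictly larger than both. Every other inserted block has size less than $\min(|A|,|B|)$. So all entries larger than $|A\cup B|$ are unchanged and one more entry equals $|A\cup B|$, meaning the list strictly increases. There are at most $2|X\cup Y|$ blocks, each of size at most $|X\cup Y|$, so only finitely many lists occur and the process terminates. With this repair your proof is complete.

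The paper avoids this issue altogether. It forms the bipartite intersection graph of the two normalised partitions and orders each component so that every block meets the union of the earlier ones. It then applies Lemma \ref{lem:submodular_intersecting_large} to the running union and the next block. The running union is large as soon as it contains a large block, and singletons are absorbed with equality. The pieces $(C_1\cup\cdots\cup C_{b-1})\cap C_b$ are never processed again, so it does not matter whether they are small. They directly partition $X\cap Y$, and the component unions partition $X\cup Y$.

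In your uncrossing, $A\cap B$ returns to the pool and may cross further blocks, which is what forces the re-splitting and hence the more careful potential. Your route gives the familiar laminar-family picture. The paper's chain argument is a single finite telescoping with no termination question.
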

\begin{proof}
Let $A,B\subseteq E\cup L$. Let 
$$
f_1^D(A)=\sum_{j=1}^mf_1(A_j)
\qquad \mbox{ and } \qquad
f_1^D(B)=\sum_{k=1}^nf_1(B_n)
$$
where $\mathcal{A}=\{A_j:1\leq j\leq m\}$ is a partition of $A$, and $\mathcal{B}=\{B_k:1\leq k\leq n\}$ is a partition of $B$. Since for a set $T\subseteq E$ with $|V(T)|\leq 2t$, $f_1(T)=|T|=\sum_{e\in T}|\{e\}|=\sum_{e\in T}f_1(\{e\})$, by partitioning into singletons, we may assume that each $A_j,B_k\subseteq E$ has size 1 or spans at least $2t+1$ vertices. 

Construct a bipartite graph $\mathcal{G}=(\mathcal{V},\mathcal{E})$ with the bipartition $\mathcal{V}=\mathcal{A}\cup \mathcal{B}$ and $A_jB_k\in\E$ if $A_j\cap B_k\neq \emptyset$. Consider a connected component $\C$ of $\G$. Let $V(\C)=\{C_1,C_2,\ldots,C_s\}$ be an ordering such that $(\cup_{j=1}^{b-1}C_j)\cap C_b\neq\emptyset$ for all $1\leq b\leq s$ that is $\G[\cup_{j=1}^b C_j]$ is connected for all $1\leq b\leq s$.
Note that since each $C_i$ is either $A_j$ or $B_k$ for some $j$ or $k$, and each $C_i$ contains either a simple edge or spans at lest $2t+1$ vertices.
Then by repeated applications of Lemma \ref{lem:submodular_intersecting_large} and the fact that $f_1(A_j)+f_1(B_k)=f_1(A_j\cup B_k)+f_1(A_j\cap B_k)$ when $|A_j|=1$ (resp.\ $|B_k|=1$) together with $A_j\subseteq B_k$ (resp.\ $B_k\subseteq A_k$) we can write
\begin{align*}
\sum_{j=1}^sf_1(C_j)&=f_1(C_1)+f_1(C_2)+\cdots +f_1(C_s)\\
    &\geq f_1(C_1\cup C_2)+f_1(C_1\cap C_2)+f_1(C_3)+\cdots+f_1(C_s)\\
    &\geq f_1(C_1\cup C_2\cup C_3)+f_1\big((C_1\cup C_2)\cap C_3\big)+f_1(C_1\cup C_2)+f_1(C_4)+\cdots+f_1(C_s)\\
    &\ \vdots\\
    &\geq f_1\big(C_1\cup C_2\cup\ldots\cup C_s\big)+\sum_{b=2}^sf_1\big((C_1\cup\ldots\cup C_{b-1})\cap C_b\big).
\end{align*}
Now, applying the same argument to all connected components $\C$ of $\G$ we can get
\begin{align*}
    f_1^D(A)+f_1^D(B)&=\sum_{j=1}^mf_1(A_j)+\sum_{k=1}^mf_1(B_k)\\
    &= \sum_{\substack{\C: \text{ component of }\G\\ V(\C)=\{C_1,\ldots, C_s\}}}\bigg(\sum_{j=1}^sf_1(C_j) \bigg)\\
    &\geq \sum_{\substack{\C: \text{ component of }\G\\ V(\C)=\{C_1,\ldots, C_s\}}} \bigg(f_1(C_1\cup\ldots \cup C_s)+\sum_{b=2}^sf_1\big((C_1\cup\ldots\cup C_{b-1})\cap C_b\big) \bigg)\\
    &=\sum_{\substack{\C: \text{ component of }\G\\ V(\C)=\{C_1,\ldots, C_s\}}} f_1(C_1\cup\ldots \cup C_s) \\
    &\ \ \ \ \ + \sum_{\substack{\C: \text{ component of }\G\\ V(\C)=\{C_1,\ldots, C_s\}}} \sum_{b=2}^sf_1\big((C_1\cup\ldots\cup C_{b-1})\cap C_b\big) \bigg)\\
    &\geq f_1^D(A\cup B)+f_1^D(A\cap B),
\end{align*}
where the last inequality follows since the family 
$$
\{C_1\cup \ldots \cup C_s: V(\C)=\{C_1,C_2,\ldots ,C_s\}, \C \text{ is a connected component of }\G\}
$$
is a partition of $A\cup B$, and the family 
$$
\{(C_1\cup\ldots \cup C_{b-1})\cap C_b: V(\C)=\{C_1,\ldots ,C_s\}, \C \text{ is a connected component of }\G, 2\leq b\leq s\}
$$
is a partition of $A\cap B$.
\end{proof}

It is straightforward to verify that $f_1^D$ is monotone and normalized. Thus it follows from Lemma \ref{lem:submodular-f1} that $(E\cup L,f_1^D)$ is a polymatroid. This leads us to the next theorem. The $i=1$ part of this theorem can be considered a special case (since the function $f_1$ is fixed) of Edmonds' theorem in \cite{Edm}. Edmonds gave the statement and its proof, and Antolini, Dewar and Tanigawa \cite{ADT} very recently adapted Edmonds' statement and proof for their own purposes. Although our statement and proof are almost identical to that of \cite{ADT} with $f_1$ being a fixed function, we present the proof here for the sake of completeness since they assume $f_1$ to be submodular.
\begin{thm}\label{thm:edmonds-both}
Let $G=(V,E,L)$ be a looped simple graph $f_1^D$ and $f_i$, for $i=0,1$, be the functions defined above. Put
$$
\I_{f_i}:=\{T\subseteq E\cup L: |I|\leq f_i(I) \text{ for all } I\subseteq T\}.
$$
Then $(E\cup L,\I_{f_i})$ is a matroid with rank function $\hat f_i:2^{E\cup L}\to \bZ$ given by
$$
\hat f_0(T) :=\min\{|T'|+f_0(T\setminus T'):T'\subseteq T\}
$$
and
\begin{align*}
\hat f_1(T):&=\min\{|T'|+f_1^D(T\setminus T'):T'\subseteq T\}\\
&=\min\big\{|T'|+\sum_{j=0}^kf_1(T_j):T'\subseteq T \text{ and } \{T_0,T_1,\ldots,T_k\} \text{ is a partition of }T\setminus T'\big\}
\end{align*}
respectively for $i=0$ and $i=1$.
\end{thm}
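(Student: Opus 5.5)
The plan is to reduce both cases to Theorem \ref{thm:polymatroid}, applied to the polymatroids $(E\cup L,f_0)$ and $(E\cup L,f_1^D)$. The only extra work is to show that the independent sets defined by $f_1$ coincide with those defined by $f_1^D$.

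\emph{Case $i=0$.} The function $f_0(T)=t|V(T)|$ is normalized and monotone. It is submodular because $V(X\cup Y)=V(X)\cup V(Y)$ and $V(X\cap Y)\subseteq V(X)\cap V(Y)$. So Theorem \ref{thm:polymatroid} applies directly. It shows that $\I_{f_0}$ is the family of independent sets of a matroid, with rank $\min\{|T\setminus I|+f_0(I):I\subseteq T\}$. Setting $T'=T\setminus I$ turns this into the stated formula for $\hat f_0$.

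\emph{Case $i=1$.} Lemma \ref{lem:submodular-f1}, together with monotonicity and normalization, shows that $(E\cup L,f_1^D)$ is a polymatroid. Theorem \ref{thm:polymatroid} then gives a matroid with independent sets $\I_{f_1^D}$ and rank function $\min\{|T'|+f_1^D(T\setminus T'):T'\subseteq T\}$. Unfolding the definition of $f_1^D$ as a minimum over partitions gives the second displayed expression. It therefore suffices to show $\I_{f_1}=\I_{f_1^D}$.

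Since $f_1^D(I)\le f_1(I)$ (take the trivial partition), we get $\I_{f_1^D}\subseteq\I_{f_1}$. For the converse, let $T\in\I_{f_1}$, let $I\subseteq T$, and let $\{I_1,\dots,I_k\}$ be a partition of $I$ attaining $f_1^D(I)$. Each $I_j$ is a subset of $T$, so $|I_j|\le f_1(I_j)$. Summing over $j$ gives $|I|\le f_1^D(I)$, so $T\in\I_{f_1^D}$. Hence the matroid induced by $f_1^D$ has exactly $\I_{f_1}$ as its independent sets, with the claimed rank function $\hat f_1$.

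The proof has no real obstacle. The only delicate ingredient is the submodularity of $f_1^D$, and this is already supplied by Lemma \ref{lem:submodular-f1}. The remaining care is bookkeeping: $f_1^D$ is normalized ($f_1^D(\emptyset)=0$ via the empty partition) and monotone. For monotonicity, take $S\subseteq T$ and an optimal partition of $T$, and restrict it to $S$. Each part shrinks, and $f_1$ does not increase under shrinking within the same or a lower piece: for example $|T|\le t|V(T)|-1$ when $|V(T)|=2t+1$, since a simple graph on $2t+1$ vertices has at most $t(2t+1)$ edges. Also check that parts becoming empty may simply be dropped, which is allowed because $f_1(\emptyset)=0$.
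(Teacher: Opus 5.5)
Your proposal is correct and is essentially the paper's proof. You apply Theorem \ref{thm:polymatroid} to $f_0$ and to the polymatroid $f_1^D$ (via Lemma \ref{lem:submodular-f1}), and you get $\I_{f_1}=\I_{f_1^D}$ from $f_1^D\le f_1$ together with summing $|I_j|\le f_1(I_j)$ over an optimal partition of $I\subseteq T$.

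One small slip appears in your extra monotonicity check, which the paper simply calls straightforward. The inequality $|T|\le t|V(T)|-1$ for $|V(T)|=2t+1$ is false for $T=E(K_{2t+1})$. What you actually need is this: if a part $P\subseteq E$ with $|V(P)|=2t+1$ shrinks to some $P'$ with $|V(P')|\le 2t$, then $f_1(P')=|P'|\le\binom{2t}{2}\le t(2t+1)-1=f_1(P)$. So $f_1$, and hence $f_1^D$, is monotone as you claim.
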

\begin{proof}
First suppose $i=0$. Since $(E\cup L, f_0)$ is a polymatroid, Theorem \ref{thm:polymatroid} implies that the matroid $(E\cup L,\I_{f_0})$ is exactly the matroid induced by $f_0$ and its rank function is $\hat f_0$.

Now suppose $i=1$. Since $(E\cup L,f_1^D)$ is a polymatroid, by Theorem \ref{thm:polymatroid} we obtain a matroid whose independent sets are
$$
\I_{f_1^D}=\{T\subseteq E\cup L: |I|\leq f_1^D(T) \text{ for all } I\subseteq T\}.
$$
We claim that $\I_{f_1^D}=\I_{f_1}$. The fact that $f_1^D(I)\leq f_1(I)$ for all $I\subseteq E\cup L$ gives $\I_{f_1^D}\subseteq \I_{f_1}$. Let $T\in \I_{f_1}$. Then all subsets of $T$ are contained in $\I_{f_1}$ by the definition of $\I_{f_1}$. Take a subset $I\subseteq T$ and let $\{I_1,I_2,\ldots,I_k\}$ be a partition of $I$ from which $f_1^D(I)$ can be obtained. Then $|I_j|\leq f_1(I_j)$ holds since $I_j\in \I_{f_1}$ and we can write
$$
|I|=\sum_{j=1}^k|I_j|\leq \sum_{j=1}^kf_1(I_j)=f_1^D(I)
$$
which implies $T\in \I_{f_1^D}$.
The rank function for $(E\cup L, \I_{f_1})$ now follows from Theorem \ref{thm:polymatroid} and the definition of $f_1^D$.
\end{proof}

Given a matroid $\M=(E,\I)$ and a subset $S\subset E$, let $\M/S$ denote the matroid obtained from $\M$ by contracting the set $S$. By Theorems \ref{thm:d-dimchar} and \ref{thm:edmonds-both} we have the following.

\begin{cor}\label{cor:isomorphic-both}
Let $G=(V,E,L)$ be a looped simple graph and $d,t$ be integers with $d\geq 2$ and $d\geq 2t-1$. Then
\begin{itemize}
    \item [(i)] when $d= 2t-1$, the matroid $(E\cup L, \I_{f_1})$; and
    \item[(ii)] when $d\geq 2t$, the matroid $(E\cup L, \I_{f_0})$ 
\end{itemize}
is isomorphic to $\calR^{lc}_d(G^{[d-t]})/L'$ where $L'$ is the set of the loops contained in $G^{[d-t]}-G$. Thus

\begin{align*}
r^{lc}_d(T\cup L')=&|L'|+\hat f_1(T)\\
		   =&(d-t)|V|+\hat f_1(T)\\
		   =&(d-t)|V|+\min\big\{|T'|+\sum_{j=0}^kf_1(T_j):T'\subseteq T, \{T_0,\ldots,T_k\} \text{ is a partition of }T\setminus T'\big\}
\end{align*}
holds when $d=2t-1$ and 
$$
r_d^{lc}(T\cup L') = |L'|+\hat f_0(T)=(d-t)|V|+\min\{|T'|+f_0(T\setminus T')\text{ for all }T'\subseteq T\}
$$
holds when $d\geq 2t$.
\end{cor}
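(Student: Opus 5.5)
The plan is to identify the independent sets of $\calR^{lc}_d(G^{[d-t]})/L'$ with $\I_{f_i}$, and then read off the rank formula from Theorem \ref{thm:edmonds-both}. Two preliminary facts will be used throughout. First, $L'$ is independent in $\calR^{lc}_d(G^{[d-t]})$: each vertex carries $d-t\le d$ loops of $L'$, and their generic normals $q_j$ are linearly independent. Hence $T\subseteq E\cup L$ is independent in the contraction if and only if $T\cup L'$ is independent, and $r_{M/L'}(T)=r^{lc}_d(T\cup L')-|L'|$ with $|L'|=(d-t)|V|$. Second, $d\ge 2t-1$ gives $t\le (d+1)/2$, so $d-t\ge d-\lfloor (d+1)/2\rfloor=\lfloor d/2\rfloor$. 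Thus every vertex of $G^{[d-t]}$, and every vertex of any graph containing $L'$, has at least $\lfloor d/2\rfloor$ loops, and Theorem \ref{thm:d-dimchar} applies.

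\emph{Independence in the contraction implies $T\in\I_{f_i}$.} Suppose $T\cup L'$ is independent and let $I\subseteq T$. Then $I$ together with the $(d-t)|V(I)|$ loops of $L'$ at $V(I)$ is independent, so $|I|+(d-t)|V(I)|\le d|V(I)|$, that is $|I|\le t|V(I)|=f_0(I)$. This settles $i=0$. For $i=1$, so $d=2t-1$, only the exceptional pieces of $f_1$ need checking.
\begin{itemize}
\item[(a)] If $I\subseteq E$ and $|V(I)|\le 2t$, then $|I|\le f_1(I)=|I|$ is trivial.
\item[(b)] If $I\subseteq E$ and $|V(I)|=2t+1$, then $|I|\le\binom{2t+1}{2}=t(2t+1)$. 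Equality would force $I$ to be the edge set of a $K_{d+2}$. That set is dependent in $\calR_d$, and its rows in $R_d(G,p,q)$ are exactly the bar-joint rows, so it would be dependent here too. Hence $|I|\le t(2t+1)-1=f_1(I)$.
\end{itemize}

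\emph{$T\in\I_{f_i}$ implies independence in the contraction.} Let $T\in\I_{f_i}$ and put $F=T\cup L'$. For every $X\subseteq F$, the part of $X$ lying in $T$ has at most $t|V(X)|$ elements and the part lying in $L'$ has at most $(d-t)|V(X)|$. Hence the looped graph $H=(V,F)$ is $d$-sparse in the sense $|X|\le d|V(X)|$.

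Next I augment $H$ by adding new generic loops, possibly lying outside $G^{[d-t]}$, to obtain a $d$-tight spanning graph $H^+$. A vertex $v$ may receive a further loop unless $v$ lies in a tight set. Because the count $d|V(X)|-|X|$ is supermodular on intersecting sets, the union of two intersecting tight sets is tight. So while $V$ itself is not tight, some vertex lies outside every maximal tight set, and we add a loop there; repeating this terminates with a $d$-tight graph. I expect this augmentation argument to be the main technical obstacle. In particular the interaction between edge-only tight sets and loops has to be handled carefully, and the fallback is to argue directly with the matroid induced by the count $d|V(X)|$.

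$H^+$ must also be $K_{d+2}$-free. When $d\ge 2t$, a $K_{d+2}$ has $\binom{d+2}{2}>t(d+2)$ edges, so it cannot occur in $T$. When $d=2t-1$, it is excluded by the $f_1$ value $t(2t+1)-1$ on edge sets spanning $2t+1$ vertices. Every vertex of $H^+$ still has at least $\lfloor d/2\rfloor$ loops because $L'\subseteq H^+$.

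Now apply Theorem \ref{thm:d-dimchar} to $H^+$ itself. It is $\mathcal L_d$-rigid, so its rank is $d|V|=|E(H^+)\cup L(H^+)|$, and therefore all its edges and loops are independent. In particular $F$ is independent, since the added loops were generic and their presence does not affect the independence of $F$. Combining both directions, the contraction has independent sets $\I_{f_i}$. The stated formulas for $r^{lc}_d(T\cup L')$ then follow from $r^{lc}_d(T\cup L')=|L'|+r_{M/L'}(T)$ together with Theorem \ref{thm:edmonds-both}.
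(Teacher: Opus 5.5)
Your proposal is correct and follows the paper's route. The paper's proof says only that the result follows by comparing independent sets. You make that comparison explicit: the count $|I|\le t|V(I)|$ together with the $K_{d+2}$ exclusion handles one direction, Theorem \ref{thm:d-dimchar} applied to a loop-augmented $d$-tight, $K_{d+2}$-free graph handles the other, and the rank formula is then read off from Theorem \ref{thm:edmonds-both}. The augmentation step you flagged is routine: since $U\mapsto |F[U]|$ is supermodular and $d|U|$ is modular, any two tight sets, intersecting or not, have tight union, so there is a unique maximal tight set.
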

\begin{proof}
The statement follows by comparing the independent sets in each matroid.
\end{proof}

Let $G=(V,E,L)$ be a looped simple graph, $T'\subseteq T\subseteq E\cup L$ and $\{T_0,T_1,\ldots,T_k\}$ be a partition of $T\setminus T'$. Suppose
$|V(T_j)|\leq 2t$ for some $t>1$, and $T_j\cap L=\emptyset$ for some $0\leq j\leq k$. Then we have $|T_j|= f_1(T_j)$ which in turn gives
$$
|T'\cup T_j|+\sum_{s\in\{0,1,...,k\}\setminus\{j\}}f_1(T_s)= |T'|+\sum_{n=0}^kf_1(T_n).
$$
Combining this with Corollary \ref{cor:isomorphic-both} we obtain the following.
\begin{lem}\label{lem:geq2t+1}
Let $G=(V,E,L)$ be a looped simple graph and $L'$ denote the loops contained in $G^{[d-t]}-G$ for some positive integers $d,t$ with $d\geq 2$ and $d= 2t-1$. Then
$$
r^{lc}_d(T\cup L')=(d-t)|V|+\min\big\{|T'|+\sum_{j=0}^kf_1(T_j):T'\subseteq T \text{ and } \{T_0,\ldots,T_k\} \text{ is a partition of }T\setminus T'\big\}
$$
where $|V(T_j)|\geq 2t+1$ for all $T_j$ with $T_j\cap L=\emptyset$, $0\leq j\leq k$.
\end{lem}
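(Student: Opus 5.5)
The plan is to start from the formula in Corollary \ref{cor:isomorphic-both}(i). In the case $d=2t-1$ it gives
$$
r^{lc}_d(T\cup L')=(d-t)|V|+\min\big\{|T'|+\textstyle\sum_{j=0}^kf_1(T_j)\big\},
$$
where the minimum runs over all $T'\subseteq T$ and all partitions $\{T_0,\ldots,T_k\}$ of $T\setminus T'$. The aim is to show that adding the restriction "$|V(T_j)|\geq 2t+1$ whenever $T_j\cap L=\emptyset$" does not change the value of the minimum.

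One inequality is immediate. The restricted family of pairs $(T',\{T_j\})$ is a subfamily of the unrestricted one, so the restricted minimum is at least the unrestricted minimum.

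For the reverse inequality, take a pair $(T',\{T_0,\ldots,T_k\})$ attaining the unrestricted minimum. Suppose some part $T_j$ has $T_j\cap L=\emptyset$ and $|V(T_j)|\leq 2t$. Then $T_j\subseteq E$, so the first case of the definition of $f_1$ gives $f_1(T_j)=|T_j|$. Move $T_j$ into $T'$: replace $T'$ by $T'\cup T_j$ and delete $T_j$ from the partition. By the identity displayed just before the lemma, the value of the objective is unchanged, and the result is still an admissible pair. Each such move strictly decreases the number of offending parts, so after finitely many moves every remaining part either contains a loop or spans at least $2t+1$ vertices. We therefore have a restricted pair with the same value, which gives equality of the two minima.

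Two small points need checking. First, if every part is moved, the partition of $T\setminus T'$ becomes empty (or $T\setminus T'=\emptyset$). The sum is then empty, and this case should be allowed in the statement; it agrees with the unrestricted formula with $f_1(\emptyset)=0$. Second, the requirement $t>1$ appears in the displayed identity, but the identity only uses $f_1(T_j)=|T_j|$, which holds directly from the definition of $f_1$. There is no real obstacle beyond this bookkeeping.
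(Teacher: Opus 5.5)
Your proposal is correct and follows the same argument as the paper. You start from Corollary \ref{cor:isomorphic-both}(i) and observe that any loop-free part spanning at most $2t$ vertices satisfies $f_1(T_j)=|T_j|$, so it can be absorbed into $T'$ without changing the value; this is exactly the identity the paper displays just before the lemma. Your additional remarks are valid bookkeeping that the paper leaves implicit: the trivial inequality for the restricted family, and the fact that the empty partition with $T'=T$ must be permitted (for instance when $T$ is a single edge, where no nonempty restricted partition exists).
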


\begin{lem}\label{lem:union-both}
Let $G=(V,E,L)$ be a looped simple graph $T_1,T_2\subseteq E\cup L$ be sets, $t>1$ be an integer. Suppose either
\begin{itemize}
    \item[(i)] $T_j\cap L\neq \emptyset$ or $|V(T_j)|\geq 2t+2$ for each $j=1,2$; or
    \item[(ii)] $|V(T_1)\cap V(T_2)|\geq 1$ and $|V(T_j)|= 2t+1$ for each $j=1,2$
\end{itemize} 
holds. Then we have $f_1(T_1\cup T_2)\leq f_1(T_1)+f_1(T_2)$.
\end{lem}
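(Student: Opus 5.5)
We compare $f_1(T_1\cup T_2)$ with $t|V(T_1\cup T_2)|$ and use $|V(T_1\cup T_2)|=|V(T_1)\cup V(T_2)|\le |V(T_1)|+|V(T_2)|-|V(T_1)\cap V(T_2)|$. The plan is to write $f_1(T_j)=t|V(T_j)|-a_j$ with $a_j\in\{0,1\}$ and to bound the defect $a_j$ in each case. Throughout we use the crude bound $f_1(T)\le t|V(T)|$, valid for every $T$. If $T$ has a loop, or $|V(T)|\ge 2t+1$, this is immediate from the definition. If $T\subseteq E$ and $|V(T)|\le 2t$, then $T$ is simple, so $|T|\le \binom{|V(T)|}{2}\le t|V(T)|$ as $|V(T)|-1\le 2t$.

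\emph{Case (i).} Here each $T_j$ contains a loop or spans at least $2t+2$ vertices. In either situation the third branch of $f_1$ applies, so $f_1(T_j)=t|V(T_j)|$. The crude bound then gives
\[
f_1(T_1\cup T_2)\le t|V(T_1\cup T_2)|\le t|V(T_1)|+t|V(T_2)|=f_1(T_1)+f_1(T_2).
\]
Note that no intersection hypothesis is needed in this case.

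\emph{Case (ii).} Now $f_1(T_j)=t|V(T_j)|-1$ if $T_j\subseteq E$, and $f_1(T_j)=t|V(T_j)|$ if $T_j$ contains a loop. In either event $f_1(T_j)\ge t|V(T_j)|-1$. Using $|V(T_1)\cap V(T_2)|\ge 1$ and the crude bound,
\[
f_1(T_1\cup T_2)\le t\bigl(|V(T_1)|+|V(T_2)|-1\bigr)\le f_1(T_1)+f_1(T_2)+2-t.
\]
So the inequality follows whenever $t\ge 2$, which is exactly the hypothesis $t>1$.

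The only step needing care is to check that the crude bound also covers $T_1\cup T_2$ in the first branch and at the value $2t+1$. The second branch gives $t|V|-1$ there, which is below $t|V|$, so nothing is lost. I expect no real obstacle beyond this bookkeeping. The main point is that the shared vertex in (ii) saves $t$, which absorbs the two defects of $1$ precisely when $t\ge 2$.
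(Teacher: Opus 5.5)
Your proof is correct and matches the paper's argument: you write $f_1(T_j)=t|V(T_j)|-a_j$ with $a_j\in\{0,1\}$, apply inclusion--exclusion on the vertex sets together with $f_1(T_1\cup T_2)\le t|V(T_1\cup T_2)|$, and in case (ii) let the shared vertex contribute $t\ge 2\ge a_1+a_2$. The only difference is that you check the bound $f_1(T)\le t|V(T)|$ explicitly, which the paper uses without comment. For $T_1\cup T_2$ that bound is in fact immediate, since $T_1\cup T_2$ always contains a loop or spans at least $2t+1$ vertices.
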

\begin{proof}
Let $f_1(T_1)=t|V(T_1)|-a$ and $f_1(T_2)=t|V(T_2)|-b$ where $a,b\in\{0,1\}$.
\begin{align*}
    f_1(T_1)+f_1(T_2)&\geq t|V(T_1)|-a+t|V(T_2)|-b\\
                     &=t|V(T_1)\cup V(T_2)|-a+t|V(T_1)\cap V(T_2)|-b\\
                     &\geq f_1(T_1\cup T_2)
\end{align*}
where the last inequality holds trivially when (i) applies as this forces $a=b=0$ and it holds when (ii) applies as this forces $t|V(T_1)\cap V(T_2)|\geq a+b$.
\end{proof}

\begin{lem}\label{lem:edge_char-both} Let $G=(V,E,L)$ be a looped simple graph, $d,t$ be positive integers with $d\geq 2$ and $d\geq 2t-1$ and $f_i$ be the function defined above for $i\in \{0,1\}$. Then 
\begin{itemize}
\item[(i)] when $d=2t-1$
$$
r^{lc}_d(T\cup L')=(d-t)|V|+\min\big\{|T'|+\sum_{j=0}^kf_1(T_j):T'\subseteq T \text{ and } \{T_0,\ldots,T_k\} \text{ partitions }T\setminus T'\big\}
$$
where the partitions allowed satisfy the following: only $T_0$ may contain loops and if it does not then $|V(T_0)|\geq 2t+2$; $|V(T_j)|= 2t+1$ for all $1\leq j\leq k$; $V(T_j)\cap V(T_l)=\emptyset$ for all distinct $j$ and $l$. 
\item[(ii)] when $d\geq 2t$
$$
r^{lc}_d(T\cup L')=(d-t)|V|+\min\big\{|T'|+f_0(T_0):T_0=T\setminus T'\}
$$
where $|V(T_0)|\geq 2t+2$ or $T_0$ contains some loops.

\end{itemize}
\end{lem}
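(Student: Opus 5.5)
Throughout, I write $\Phi_i(T',\mathcal{P})$ for the quantity inside the minimum for a given $T'\subseteq T$ and partition $\mathcal{P}$ of $T\setminus T'$. Since both formulas in the statement are minima over a subfamily of the pairs allowed in Corollary \ref{cor:isomorphic-both}, each right-hand side is at least $r^{lc}_d(T\cup L')$. For the reverse inequality I would start from a minimising pair and transform it into an allowed one without increasing $\Phi_i$. The only tools needed are two moves: a part carrying no loop and spanning at most $2t$ vertices can be absorbed into $T'$ at no cost, as in the remark preceding Lemma \ref{lem:geq2t+1}; and parts can be merged using Lemma \ref{lem:union-both}.

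\textbf{Case (ii), $d\geq 2t$.} Here the rank is $\min\{|T'|+f_0(T\setminus T')\}$ with no partition, so only the size condition on $T_0=T\setminus T'$ is at stake. Suppose a minimiser has $T_0\subseteq E$ with $|V(T_0)|\leq 2t+1$.
\begin{itemize}
\item If $|V(T_0)|\leq 2t$, then $T_0$ spans at most $\binom{2t}{2}\leq t\cdot 2t$ edges. Replacing $T'$ by $T$ then gives $|T|\leq |T'|+f_0(T_0)$, so the new pair is no worse, and it has $T_0=\emptyset$, which I count as allowed.
\item If $|V(T_0)|=2t+1$, then $|T_0|\leq\binom{2t+1}{2}=t(2t+1)=f_0(T_0)$, so moving $T_0$ into $T'$ again does not increase the value.
\end{itemize}
For the vacuous $T_0=\emptyset$ I would interpret the statement as allowing $T'=T$. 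This boundary convention is the same one needed in (i).

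\textbf{Case (i), $d=2t-1$.} I would start from Lemma \ref{lem:geq2t+1}, so that every loopless part already spans at least $2t+1$ vertices. The reduction then has three steps.
\begin{enumerate}
\item Merge all parts that contain a loop or span at least $2t+2$ vertices into a single part $T_0$. Lemma \ref{lem:union-both}(i) says this does not increase $\sum f_1$; it needs no intersection hypothesis, and the union again contains a loop or spans at least $2t+2$ vertices.
\item The remaining parts are loopless and span exactly $2t+1$ vertices. If two of them share a vertex, merge them. Lemma \ref{lem:union-both}(ii) applies, and the union spans at least $2t+2$ vertices, so it can then be merged into $T_0$ by step 1. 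If some $T_j$ meets $V(T_0)$, merge it into $T_0$ by the same inequality. I would verify that $f_1(T_0\cup T_j)\leq f_1(T_0)+f_1(T_j)$ via $t|V(T_0)\cap V(T_j)|\geq 1$ when $T_0$ is of the $b=0$ type. Since the number of parts strictly decreases, this terminates.
\item If, when the merging stops, $T_0$ is loopless with $|V(T_0)|\leq 2t+1$, handle it as in the argument for (ii). If it spans at most $2t$ vertices, move it to $T'$ at equal cost. If it spans exactly $2t+1$ vertices, it is a part with $f_1=t(2t+1)-1$, and I would relabel it as one of the $T_j$, $j\geq 1$, making $T_0$ empty.
\end{enumerate}

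The main obstacle is step 2 and its interplay with the vertex-disjointness requirement, specifically checking that Lemma \ref{lem:union-both} really covers merging a $(2t+1)$-part with $T_0$ (it covers (i)-type pairs and (ii)-type pairs, but not mixed ones). For the mixed case I would redo the short computation from the proof of Lemma \ref{lem:union-both} directly: $a=0$, $b=1$, and an intersection of size at least one gives $t\geq 1$, which suffices. A secondary concern is the convention for an empty $T_0$, which I would make explicit.
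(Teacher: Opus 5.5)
Your proposal is correct and takes essentially the paper's route: absorb loopless parts on at most $2t$ vertices into $T'$, merge parts using Lemma~\ref{lem:union-both}, and for $d\geq 2t$ move a small loopless $T_0$ into $T'$ via $|T_0|\leq\binom{|V(T_0)|}{2}\leq t|V(T_0)|$. You are also more careful than the paper in checking the mixed merge of $T_0$ with a $(2t+1)$-part, which Lemma~\ref{lem:union-both} does not literally cover, and in flagging that $T_0=\emptyset$ must be allowed.

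There are three small slips. Two distinct $(2t+1)$-parts may share their whole vertex set, so their union need not span $2t+2$ vertices; it is then just another $(2t+1)$-part, which is harmless since the part count still drops. In (ii) the inequality you need is $\binom{n}{2}\leq tn$ for $n=|V(T_0)|\leq 2t+1$, not $\binom{2t}{2}\leq 2t^2$. Finally, Lemma~\ref{lem:union-both}(ii) needs $t\geq 2$, which $d=2t-1\geq 2$ guarantees.
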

\begin{proof}
When $d=2t-1$ (since $d\geq 2$, this forces $t\geq 2$) combining Corollary \ref{cor:isomorphic-both} and Lemmas \ref{lem:geq2t+1} and \ref{lem:union-both} we may assume that $T_0$ is the only member of the partition that may contain a loop or spans at least $2t+2$ vertices. Similarly, we may also assume $|V(T_j)|=2t+1$ for all $1\leq j\leq k$ and $V(T_j)\cap V(T_l)=\emptyset$ by Lemmas \ref{lem:geq2t+1} and \ref{lem:union-both}.

For $d\geq 2t$ assume $T_0\subseteq E$ and $|V(T_0)|\leq 2t+1$. This implies $|T_0|\leq {|V(T_0)|\choose 2}\leq t|V(T_0)|$ and we obtain
$$
|T| + f_0(\emptyset) = |T'|+|T_0|\leq |T'|+t|V(T_0)|=|T'|+f_0(T_0).  
$$
This implies that we can add edges from $T_0$ to $T'$ to obtain a smaller value.
\end{proof}

We can now derive the main result of this section. This gives a reformulation of Lemma \ref{lem:edge_char-both} describing the rank function of $\mathcal{R}_d^{lc}$ in terms of covers. To this end we need to give some definitions for the case when $d=2t-1$.

Let $G=(V,E,L)$ be a looped simple graph and $t\geq 2$ be an integer.
A family $\X=\{X_0,X_1,\ldots,X_k\}$ of subsets of $V$ is said to be a {\em cover} of $G$ if every edge $e\in E$ and every loop $l\in L$ is induced by some member
of $\X$. A cover $\X=\{X_0,X_1,\ldots,X_k\}$ of $G-T$ for some $T\subset E\cup L$ is called {\em admissible} if every loop in $G-T$ is induced by the vertices
in $X_0$, every loop in $T$ is induced by the vertices in $V\setminus X_0$ and $|X_j|=  2t+1$ for all $1\leq j\leq k$.
The set $X_0$, which could possibly be empty, is called the {\em looped member} of the cover.
The {\em value} of the admissible cover (or just cover when $k=0$) $\X$ of $G-T$ is defined to be
$$
|T|+t|X_0|+\sum_{j=1}^k(t|X_j|-1)=|T|+t|X_0|+k(2t^2+t-1)
$$
and denoted by $\val_t(\X)$.
The family $\X$ is called {\em non-intersecting} if $X_j\cap X_l=\emptyset$ for all distinct $j$ and $l$.

Let $G=(V,E,L)$ be a looped simple graph with $X\subseteq V$. We use $E_G(X)$ and $L_G(X)$ to denote the set of simple edges and loops respectively in $G[X]$.

\begin{thm}\label{thm:rank_cover-both}
Let $G=(V,E,L)$ be a looped simple graph, $d\geq 2$ and $d\geq 2t-1$. Then
$$
    r^{lc}_d(G^{[d-t]})=(d-t)|V|+\min \val_t(\X)
$$
where, when $d=2t+1$ the minimum is taken over all $T\subseteq E\cup L$ and all non-intersecting admissible covers $\X=\{X_0,X_1,\ldots,X_k\}$ of $G-T$ and, when $d\geq 2t$
the minimum is taken over all $T\subseteq E\cup L$ and all covers $\X=\{X_0\}$ (i.e., $k=0$) of $G-T$.
\end{thm}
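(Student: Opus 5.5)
The plan is to read the theorem directly off Lemma~\ref{lem:edge_char-both}, applied with $T=E\cup L$ so that $r^{lc}_d(G^{[d-t]})=r^{lc}_d((E\cup L)\cup L')$. Throughout I read the case ``$d=2t+1$'' in the statement as $d=2t-1$, matching Lemma~\ref{lem:edge_char-both} and Corollary~\ref{cor:isomorphic-both}. The job is then to show that two minima agree: the minimum over pairs ($T'$, restricted partition of $E\cup L\setminus T'$) and the minimum of $\val_t$ over ($T$, non-intersecting admissible cover of $G-T$). I will prove the two inequalities separately. The case $d\geq 2t$ is the same argument with $k=0$ and $f_0$ in place of $f_1$.

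\emph{Rank $\le$ minimum value.} Fix $T$ and a non-intersecting admissible cover $\X=\{X_0,\dots,X_k\}$ of $G-T$. Set $T'=T$. Let $T_0$ be the set of edges and loops of $G-T$ induced by $X_0$. For $j\ge 1$ let $T_j$ be those induced by $X_j$ but not yet assigned. This gives a partition of $E\cup L\setminus T$ because $\X$ covers $G-T$. By admissibility every loop of $G-T$ lies in $T_0$, so $T_j\subseteq E$ and $|V(T_j)|\le |X_j|=2t+1$ for $j\ge1$.

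\begin{itemize}
\item[] If $|V(T_j)|=2t+1$, then $f_1(T_j)=t(2t+1)-1$.
\item[] If $|V(T_j)|\le 2t$, then $f_1(T_j)=|T_j|\le\binom{2t}{2}\le 2t^2+t-1$.
\item[] Always $f_1(T_0)\le t|V(T_0)|\le t|X_0|$; when $T_0\subseteq E$ with few vertices, $f_1(T_0)=|T_0|\le\binom{|V(T_0)|}{2}$, which is still at most $t|V(T_0)|$ as $|V(T_0)|\le 2t+1$.
\end{itemize}

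Hence $|T'|+\sum_j f_1(T_j)\le \val_t(\X)$. Corollary~\ref{cor:isomorphic-both} expresses the rank as a minimum over all partitions, not only the restricted ones, so this partition is admissible there and yields the upper bound.

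\emph{Rank $\ge$ minimum value.} Take $T'$ and a partition $\{T_0,\dots,T_k\}$ attaining the minimum, with the normal form of Lemma~\ref{lem:edge_char-both}(i). Put $T=T'$, $X_0=V(T_0)$ and $X_j=V(T_j)$. Then $\X$ covers $G-T$, $|X_j|=2t+1$, the $X_j$ are pairwise disjoint, and all loops of $G-T$ lie in $T_0$ and so are induced by $X_0$. For $j\ge 1$, $f_1(T_j)=t|X_j|-1$. Also $f_1(T_0)=t|X_0|$, since $T_0$ either contains a loop or spans at least $2t+2$ vertices, or is empty. So the value is exactly $\val_t(\X)$.

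The one remaining admissibility condition is that loops in $T$ avoid $X_0$. I expect this to be the main obstacle. I would handle it by an exchange step: if a loop of $T'$ sits at a vertex of $X_0$, move it into $T_0$. Then $|T'|$ drops by one and $V(T_0)$ is unchanged, so $f_1(T_0)$ stays $t|X_0|$, because $T_0$ now contains a loop or already had $f_1=t|V(T_0)|$. This contradicts minimality, or at least preserves it.

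The remaining care points are degenerate cases: $T_0=\emptyset$, where $X_0=\emptyset$ is allowed, and making sure vertices spanned only by loops are counted consistently in $V(\cdot)$. For $d\ge 2t$, Lemma~\ref{lem:edge_char-both}(ii) gives $T_0$ directly and the same translation with $X_0=V(T_0)$ works, with no admissibility constraints beyond covering.
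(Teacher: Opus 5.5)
Your proposal is correct and is essentially the paper's proof. The upper bound turns a non-intersecting admissible cover into a partition that can be used in the rank formula of Corollary~\ref{cor:isomorphic-both}; the lower bound reads a cover off an optimal partition in the normal form of Lemma~\ref{lem:edge_char-both}, with the same exchange argument (moving a loop of $T$ at a vertex of $X_0$ into $T_0$ lowers the value by one) giving admissibility. Your degenerate case $T_0=\emptyset$, $X_0=\emptyset$ plays the role of the paper's Case~2, where a loopless $T_0$ on at most $2t+1$ vertices becomes an extra member and $X_0=\emptyset$; taking $T_0$ inside $G-T$ and bounding small parts directly by $\binom{2t}{2}\le 2t^2+t-1$ are only cosmetic, slightly cleaner, variations.
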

\begin{proof}
Let $\X=\{X_0,X_1,\ldots,X_k\}$ be a non-intersecting admissible cover of $G-T$ for some $T\subseteq E\cup L$ where $k=0$ when $d\geq 2t$. Consider $T_0:=E_G(X_0)\cup L_G(X_0)$; $T_j:=E_G(X_j)$,
$1\leq j\leq k$. Clearly $V(T_j)\subseteq X_j$ for all $0\leq j\leq k$. For some $T_j$, $0\leq j\leq k$ we may have $T_j=\emptyset$ and there may be some $T_j$ with $|V(T_j)|\leq 2t+1-i$ and
$T_j\cap L=\emptyset$ where $i=1$ when $d=2t-1$ and $i=0$ when $d\geq 2t$. By replacing all such $T_j$ by
$$
T':=\bigcup\limits_{\substack{T_j:|V(T_j)|\leq 2t+1-i \\ \ T_j\cap L=\emptyset}}T_j
$$
we may assume that $\{T_0,T_1,\ldots,T_m\}$ is a partition of $(E\cup L)\setminus (T\cup T')$ for some $0\leq m\leq k$, and the sets $T_j$, $0\leq j\leq m$ satisfy
the conditions in Lemma \ref{lem:edge_char-both} when $d=2t-1$ together with $i=1$ as well as when $d\geq 2t$ together with $i=0$. Note that we have $f_i(T_0)\leq t|X_0|$ for all $i\in\{0,1\}$ (since $V(T_0)\subseteq X_0$) and $f_1(T_j)=t|X_j|-1$ for all $1\leq j\leq m$
(since $|V(T_j)|=|X_j|=2t+1$ for all $1\leq j\leq m$). We can now use Lemma \ref{lem:edge_char-both} to obtain
\begin{align*}
r_d^{lc}(G^{[d-t]})&\leq (d-t)|V| + |T\cup T'|+\sum_{j=0}^mf_i(T_j)\\
				 &\leq (d-t)|V| + |T|+\sum_{j=0}^kf_i(T_j)\\
				 &\leq (d-t)|V|+|T|+t|X_0|+\sum_{j=1}^k(2|X_j|-1)\\
				 &=(d-t)|V|+\val_t(\X),
\end{align*}
since we have $i=1$ when $d=2t-1$ and $k=m=i=0$ when $d\geq 2t$, $|T_j|\leq f_i(T_j)$ for every $T_j$ used in the definition of $T'$ and $V(T_j)\subseteq X_j$ for all $0\leq j\leq k$.

It remains to show that there exists a non-intersecting admissible cover $\X=\{X_0,X_1,\ldots,X_k\}$ for which $\displaystyle{r_d^{lc}(G^{[d-t]})=(d-t)|V|+\val_t(\X)}$ where $k=0$ when $d\geq 2t$. To prove this, let
$T\subseteq E\cup L$ and $T_0,T_1,\ldots,T_k$ be a partition of $(E\cup L)\setminus T$ satisfying the conditions in the statement of Lemma \ref{lem:edge_char-both} from which
$\displaystyle{r_d^{lc}(G^{[d-t]})}$ can be obtained, that is
$$
r_d^{lc}(G^{[d-t]})=(d-t)|V|+|T|+\sum_{j=0}^kf_i(T_j)
$$
where $i=1$ when $d=2t-1$ and $k=i=0$ when $d\geq 2t$.
We split the proof into two cases.\\

\noindent\textbf{Case 1.} $T_0\cap L\neq \emptyset$ or $|V(T_0)|\geq 2t+2$.\\

Consider $\X=\{X_0,X_1,\ldots,X_k\}$ where $X_j=V(T_j)$ for all $0\leq j\leq k$. We claim that every loop in $T$ is induced by some $X_j$, $1\leq j\leq k$.
To see this let us suppose the contrary and $l\in T$ be a loop incident with a vertex in $X_0$. We obtain that
$T_0\cup\{l\},T_1,\ldots,T_k$ is a partition of $(E\cup L)\setminus (T\setminus \{l\})$. Now Lemma \ref{lem:edge_char-both} and the fact that $f_i(T_0)=f(T_0\cup\{l\})$ 
imply that
\begin{align*}
r_d^{lc}(G^{[d-t]}) &=(d-t)|V|+|T\setminus\{l\}|+f_i(T_0\cup\{l\})+\sum_{j=1}^kf_i(T_j) \\  
                    &=(d-t)|V|+|T|+\sum_{j=0}^kf_i(T_j)-1\\
                    &=r_d^{lc}(G^{[d-t]})-1,
\end{align*}
a contradiction. Thus, by the choice of $T_j$, $1\leq j\leq k$, the family $\X$ is a non-intersecting admissible cover of $G-T$ when $d=2t-1$ (together with $i=1$) and $\X=\{X_0\}$, that is, $k=0$ is a cover of $G-T$ when $d\geq 2t$ (together with $i=0$).
By the assumption of Case 1 and the choice of $T_j$, $0\leq j\leq k$, we have $f_i(T_0)=t|X_0|$ and $f_i(T_j)=t|X_j|-1$, $1\leq j\leq k$ when $k\geq 1$.
This gives
\begin{align*}
    r_d^{lc}(G^{[d-t]})&=(d-t)|V|+|T|+\sum_{j=0}^kf_i(T_j)\\
                       &=(d-t)|V|+|T|+t|X_0|+\sum_{j=1}^k(t|X_j|-1)\\
                       &=(d-t)|V|+\val_t(\X),
\end{align*}
as desired, where $i=1$ when $d=2t-1$ and $i=k=0$ when $d\geq 2t$.\\

\noindent\textbf{Case 2.} $T_0\subset E$ and $|V(T_0)|\leq 2t+1$.\\

Since Lemma \ref{lem:edge_char-both} (ii) requires either $|V(T_0)|\geq 2t+2$ or $T_0\cap L\neq \emptyset$, we may deduce that Lemma \ref{lem:edge_char-both} (i) applies and so we have $d=2t-1$.
Consider $\X=\{X_0,X_1,\ldots,X_k,X_{k+1}\}$ where $X_0=\emptyset$, $X_j=V(T_j)$ for all $1\leq j\leq k$ and $X_{k+1}=V(T_0)$.
Then $\X$ is a non-intersecting admissible cover of $G-T$ whose looped member is $\emptyset$. We have $f_1(T_0)=t|X_{k+1}|-1$, $f_1(T_j)=t|X_j|-1$ for all
$1\leq j\leq k$. Thus we can deduce that
\begin{align*}
r_d^{lc}(G^{[d-t]})=(d-t)|V|+|T|+\sum_{j=0}^kf_1(T_j)&=(d-t)|V|+|T|+t|\emptyset|+\sum_{j=1}^{k+1}f_1(X_j)\\
										    &=(d-t)|V|+|T|+t|X_0|+\sum_{j=1}^{k+1}(t|X_j|-1)\\
											&=(d-t)|V|+\val_t(\X)
\end{align*}
as desired.
\end{proof}

\section{$k$-Balanced Graphs and Rigidity}\label{sec:d-dim_main}
In this section, we will prove Theorem \ref{thm:d_geq2_main} giving a $d$-dimensional sufficient condition for a looped simple graph $G=(V,E,L)$ to be $\mathcal{L}_d$-rigid.
Recall that $G$ is {\em $k$-balanced} if every connected
component of $G-T$, where $T\subseteq V$ with $|T|\leq k$, has at least $k-|T|$ vertices with loops. Recall also that $G$ is {\em weakly $k$-balanced} if every connected
component of $G-T$, where $T\subseteq V$ with $|T|\leq k$, has at least $k-|T|$ loops. 
See Figure \ref{fig:3balanced-4weakbalanced} and consider the looped simple graphs $G$ and $H$ in this figure. Clearly both are 3-balanced, and none of them is 4-balanced. Note that the disjoint union of $G$ and $H$ is also 3-balanced. For the weak version of balancedness, since both are 3-balanced, both are weakly 3-balanced. However, $H$ is also weakly 4-balanced, but $G$ is not. We can also deduce that their disjoint union is weakly 3-balanced.
\begin{figure}[hbt]
\begin{center}

\begin{tikzpicture}[scale=1,font=\small]
%\fill[pattern=north west lines,opacity=.6,draw] (0,0) circle [radius=4];
\begin{scope}[rotate=90]

\foreach \i in {1,2,3}
{
	\node[roundnode] (\i) at ({(\i-1)*120}: 1cm)  [] {};
}
\foreach \i/\j in {1/2,2/3,3/1}
{
	\draw[-] (\i) to (\j);
	\pgfmathsetmacro\startp{(\i-1)*120-20}
	\pgfmathsetmacro\endp{(\i-1)*120+20}
	\draw[-, in=\startp, out=\endp,loop] (\i) to (\i);
}
\end{scope}
\node [label={[label distance=0.5cm]270:$G$}] {};

\begin{scope}[xshift=3cm,rotate=90]
\foreach \i in {1,2,3}
{
	\node[roundnode] (\i) at ({(\i-1)*120}: 1cm)  [] {};
}
\foreach \i/\j in {1/2,2/3,3/1}
{
	\draw[-] (\i) to (\j);
	\pgfmathsetmacro\startp{(\i-1)*120-40}
	\pgfmathsetmacro\endp{(\i-1)*120}
	\draw[-, in=\startp, out=\endp,loop] (\i) to (\i);
    \pgfmathsetmacro\startp{(\i-1)*120}
	\pgfmathsetmacro\endp{(\i-1)*120+40}
	\draw[-, in=\startp, out=\endp,loop] (\i) to (\i);
} 
\node [label={[label distance=0.5cm]270:$H$}] {};
\end{scope}

%\node[roundnode] at (-0.7cm, 0cm) (u) [] {};
%\node[roundnode] at (0.7cm, 0cm) (v) [] {};
%\node[roundnode] at (0cm, 0.7cm) (a) [] {}
%	edge[] (u)
%	edge[] (v);
\end{tikzpicture}
\end{center}
\caption{Balancedness and weak balancedness.}
\label{fig:3balanced-4weakbalanced}
\end{figure}

\begin{proof}[Proof of Theorem \ref{thm:d_geq2_main}]
    Fix $d$ and $t$ with $d\geq 2t-1$. Let $G=(V,E,L)$, $F$ be a pair representing a counterexample with $|V|+|L|+|F|$ as small as possible and with respect to this let $|E|$ be as large as possible.
    Let $H=G-F$ and so $H^{[d-t]}=G^{[d-t]}-F$. For the case $d=2t-1$ let $T\subseteq E(H)\cup L(H)$ and $\X=\{X_0,X_1,\ldots,X_k\}$ be a non-intersecting admissible cover, and for other possibilities of $d$ and $t$ let $T\subseteq E(H)\cup L(H)$ and $\X=\{X_0\}$ be the cover from which $r_d^{lc}(H^{[d-t]})=r_d^{lc}(G^{[d-t]}-F)$ can be obtained (using Theorem \ref{thm:rank_cover-both}). That is,
\begin{align*}
    (d-t)|V|+|T|+t|X_0|+k(2t^2+t-1)&=(d-t)|V|+|T|+t|X_0|+\sum_{i=1}^k(t|X_i|-1)\\            
                       &=(d-t)|V|+\val_t(\X)\\
                       &=r_d^{lc}(H^{[d-t]})=r_d^{lc}(G^{[d-t]}-F)\\
                       &<d|V|,
\end{align*}
where $k=0$ when $d\geq 2t$.

Set $T'=T\cup F$. Note that the minimality of $|V|+|L|+|F|$ implies that no edge or loop in $T'$ is induced by the vertices in $X_0$. Consider the family $\{V\}\cup\{\{f\}:f\in T'\}\cup \X$.
We will now use the discharging method on this family in order to get a contradiction. Let $\mu(x)$ denote the initial charge of $x$ and set the initial charges on this family as $\mu(V)=(d-t)|V|$, $\mu(f):=\mu(\{f\})=1$ for all $f\in T'$, $\mu(X_0)=t|X_0|$ for all possible choices of $d,t$ and let $\mu(X_i)=2t^2+t-1$ for $1\leq i\leq k$ when $d=2t-1$. Note that $k=0$ when $d\geq 2t$. Letting $\mu_{\text{total}}$ be the total amount of charge and using $T'=T\cup F$ with $|F|\leq t$ as well as the inequality above, we can write
\begin{equation}\label{eq:total_charge}
    \mu_{\text{total}}=\mu(V)+\sum_{f\in T'}\mu(f)+\sum_{i=0}^k\mu(X_i)=(d-t)|V|+|T'|+t|X_0|+k(2t^2+t-1)<d|V|+t
\end{equation}
for all choices of $d,t$.
For the reallocation of charges we will get some of the charges from the sets and give them to the vertices carefully. We will use the notation $\sigma_x(Y)$ to denote the charge $x$ gets from $Y$.
The rules we will follow are as follows:
\begin{itemize}
\item R1 - For each $v\in V$, $\sigma_v(V)=d-t$,
\item R2 - For each $f\in T'$ and each $x$ incident with $f$:
\begin{itemize}
    \item R2A - If $x\in X_0$ $\sigma_x(f)=0$,
    \item R2B - If $x\notin X_0$, $\sigma_x(f)=\frac{1}{2}$
\end{itemize}
\item R3 - For each $x\in X_0$, $\sigma_x(X_0)=t$
\item R4 - When $d=2t-1$, for each $X_i$, $1\leq i\leq k$, $\sigma_x(X_i)=t-\frac{s}{2}$ where $s$ is the number of edges or loops in $T'$ incident with $x$.
\end{itemize}
Let $\mu'(x)$ denote the final charge of $x$, i.e., the charge after the reallocation process according to the rules above.

\begin{claim}\label{claim:mu}
    $\mu'(v)\geq d$ for all $v\in V$.
\end{claim}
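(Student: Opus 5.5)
We split according to where $v$ sits relative to the cover. Every vertex receives $d-t$ from $V$ by R1, so it suffices to show that each $v$ collects at least $t$ more from the sets $X_i$ and the singletons $\{f\}$, $f\in T'$.

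\textbf{Vertices in $X_0$.} If $v\in X_0$, rule R3 gives $\sigma_v(X_0)=t$, and R2A gives nothing further. Hence $\mu'(v)\ge (d-t)+t=d$.

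\textbf{Vertices in $X_i$, $i\ge 1$.} This occurs only when $d=2t-1$. Since the cover is non-intersecting, $v$ lies in exactly one such $X_i$. Let $s$ be the number of members of $T'$ incident with $v$. By R2B, $v$ receives $\tfrac12$ from each of these $s$ members, and by R4 it receives $t-\tfrac s2$ from $X_i$. The total is $(d-t)+\tfrac s2+t-\tfrac s2=d$. A loop is counted once by R2B, since it has the single incidence $x=v$; for consistency, R4 must count $s$ in the same way.

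\textbf{Vertices in no member of $\X$.} Every simple edge or loop at $v$ is induced by no member of $\X$, so it is not covered. It therefore lies in $T'$, either in $T$ (because $\X$ covers $G-T$) or in $F$. Each such member gives $v$ exactly $\tfrac12$, so $v$ collects $\tfrac12 d_G(v)$, where loops are counted once. It remains to show $d_G(v)\ge 2t$.

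\textbf{The degree bound.} This is the main obstacle, and I would handle it with weak $2t$-balancedness. Let $N$ be the set of simple neighbours of $v$. If $|N|\ge 2t$ we are done. Otherwise take $T_0=N$ with $|T_0|<2t$. The component $\{v\}$ of $G-T_0$ must contain at least $2t-|N|$ loops, so $v$ carries at least $2t-|N|$ loops. Hence the number of edges and loops at $v$ is at least $|N|+(2t-|N|)=2t$. If $v$ is isolated, the same argument with $T_0=\emptyset$ gives $2t$ loops at $v$.

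Therefore $\mu'(v)\ge (d-t)+t=d$ in this case as well.

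I would also verify that the rules never send out more charge than is available, so that charge is conserved and the later comparison with \eqref{eq:total_charge} is legitimate. Each $f\in T'$ gives at most $\tfrac12$ per incidence, hence at most $1=\mu(f)$. Each $X_i$ with $i\ge 1$ gives $\sum_x(t-s_x/2)\le t(2t+1)$, and the inequality $t(2t+1)\le 2t^2+t-1$ fails by exactly one. That surplus is an issue for the subsequent counting argument, not for this claim, which only bounds the final charges from below.
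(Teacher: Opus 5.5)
Your proof is correct and follows essentially the same route as the paper. It uses the identical three-way case split ($v\in X_0$; $v\in X_i$ with $i\geq 1$; $v$ in no member of $\X$), and your choice $T_0=N(v)$ simply spells out the paper's one-line appeal to weak $2t$-balancedness, which guarantees at least $2t$ members of $T'$ at $v$. Your closing remark on the charge sent out by each $X_i$ is not needed for this claim, since that is handled separately in Claim \ref{claim:mu2}.
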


\begin{proof}[Proof of Claim \ref{claim:mu}]
    First suppose $v\in X_0$. Then $\mu'(v)=(d-t)+t=d$ after $v$ gets $d-t$ from $V$ by (R1) and $t$ from $X_0$ by (R3).
    Next suppose $v\in X_i$ for some $1\leq i\leq k$. Then we need to have $d=2t-1$. Let $s$ denote the number of edges or loops in $T'$ that are incident with $v$. Thus we have
    $\mu'(v)= (d-t)+s\cdot\frac{1}{2}+t-\frac{s}{2}=d$, after $v$ gets $(d-t)$ from $V$ by (R1), $\frac{1}{2}$ from each member in $T'$ incident with $v$ by (R2B) and $t-\frac{s}{2}$ from $X_i$ by (R4). 

    Finally suppose $v$ is not contained in any $X_i$, $0\leq i\leq k$. This forces that all edges and loops incident with $v$ are contained in $T'$. Since $G$ is weakly $2t$-balanced, there are at least $2t$ such members in $T'$. This gives $\mu'(v)\geq (d-t)+2t\cdot \frac{1}{2}=d$ after $v$ gets $(d-t)$ from $V$ by (R1) and $\frac{1}{2}$ from each incident member in $T'$ by (R2B).
\end{proof}

\begin{claim}\label{claim:mu2}
    $\mu'(X_i)\geq 0$ for all $0\leq i\leq k$.
\end{claim}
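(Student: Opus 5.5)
The plan is to compute the final charge of each member of $\X$ directly from the rules, handling $X_0$ and the members $X_i$ with $1\leq i\leq k$ separately. The case $X_0$ is immediate. Its initial charge is $\mu(X_0)=t|X_0|$. It only gives charge by (R3), namely $t$ to each of its $|X_0|$ vertices. Hence $\mu'(X_0)=0$.

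Now let $1\leq i\leq k$, so $d=2t-1$ and $t\geq 2$. For $x\in X_i$ let $s_x$ be the number of members of $T'$ incident with $x$. By (R4) we get
$$\mu'(X_i)=2t^2+t-1-\sum_{x\in X_i}\Big(t-\frac{s_x}{2}\Big)=-1+\frac12\sum_{x\in X_i}s_x,$$
using $|X_i|=2t+1$. So the claim reduces to showing $\sum_{x\in X_i}s_x\geq 2$, i.e.\ at least two incidences between members of $T'$ and vertices of $X_i$, counting an edge of $T'$ with both ends in $X_i$ twice.

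To prove this I will use the structure of the cover together with weak $2t$-balancedness of $G$. Every edge and loop of $G$ lies either in $T'=T\cup F$ or is induced by some member of $\X$. Loops not in $T'$ are induced by $X_0$, and $X_0\cap X_i=\emptyset$. Also $\X$ is non-intersecting, so an edge not in $T'$ with an end in $X_i$ must have both ends in $X_i$. Therefore every loop at a vertex of $X_i$, and every edge leaving $X_i$, belongs to $T'$.

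\textbf{Zero incidences.} Suppose $\sum s_x=0$. Then $X_i$ has no loops and no edges of $G$ leave it, so $X_i$ is a union of connected components of $G$ with no loops. Taking $T=\emptyset$ in the definition of weak $2t$-balancedness, such a component must carry at least $2t$ loops, a contradiction.

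\textbf{Exactly one incidence.} Suppose $\sum s_x=1$. Then the only member of $T'$ meeting $X_i$ is either a loop at some vertex of $X_i$, or an edge $xu$ with $x\in X_i$ and $u\notin X_i$.
\begin{itemize}
\item[] In the loop case, $X_i$ is again a union of components of $G$, now carrying only one loop. This contradicts $2t\geq 4>1$.
\item[] In the edge case, delete $u$. Now $X_i$ is a union of loopless components of $G-\{u\}$, whereas weak $2t$-balancedness with $|\{u\}|=1$ demands at least $2t-1\geq 3$ loops in each component.
\end{itemize}
Both cases are contradictions, so $\sum s_x\geq 2$ and $\mu'(X_i)\geq 0$.

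The main point needing care is the bookkeeping in the reduction step, namely that every edge of $G$ (including the deleted ones in $F$) is either in $T'$ or induced by a member of $\X$. This holds because $\X$ covers $G-F-T$. It is also what makes the non-intersecting and admissible hypotheses force $X_i$ to be nearly isolated.
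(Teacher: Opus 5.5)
Your proof is correct and follows the paper's approach. $\mu'(X_0)=0$ comes directly from (R3), and for $1\leq i\leq k$ you do a direct count via (R4), using weak $2t$-balancedness to force enough members of $T'$ incident with $X_i$: all loops at $X_i$ and all edges leaving $X_i$ lie in $T'$, because $\X$ is non-intersecting and admissible. The only difference is quantitative. The paper asserts at least $2t$ such members and gets $\mu'(X_i)\geq t-1$, whereas your exact formula $\mu'(X_i)=-1+\frac12\sum_{x\in X_i}s_x$ needs only two incidences, which you justify explicitly using separators of size $0$ and $1$.
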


\begin{proof}[Proof of Claim \ref{claim:mu2}]
    For $i=0$, we have $\mu'(X_0)=\mu(X_0)-t|X_0|=0$ since the starting charge $\mu(X_0)$ of $X_0$ is $t|X_0|$ and $X_0$ gives $t$ to each of its members by (R3).
    For $1\leq i\leq k$ we need to have $d=2t-1$ and $|X_i|=2t+1$ and so $\mu(X_i)=2t^2+t-1$.
    Then since $G$ is weakly $2t$-balanced, there are at least $2t$ members in $T'$ which are incident with vertices in $X_i$ as $\X$ is non-intersecting. This gives $$
    \mu'(X_i)\geq \mu(X_i)-\big((2t+1)\cdot t-2t\cdot\frac{1}{2}\big)=2t^2+t-1-2t^2+t-t=t-1\geq 0
    $$ since $t$ is a positive integer.
\end{proof}

\begin{claim}\label{cla:mu'_f}
We have
\begin{itemize}
\item[(a)] $\mu'(f)= 0$ for all $f\in (T'\cap E)$ such that $f$ is not incident with a vertex in $X_0$,
\item[(b)] $\mu'(f)=\frac{1}{2}$ for all $f\in(T'\cap L)$ and for all $f\in (T'\cap E)$ such that $f$ is incident with a vertex in $X_0$.
\end{itemize}
\end{claim}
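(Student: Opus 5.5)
The claim is a direct bookkeeping computation. Each $f\in T'$ starts with charge $\mu(f)=1$, and the only rule that moves charge out of $f$ is (R2). So $\mu'(f)$ equals $1$ minus the sum of $\sigma_x(f)$ over the endpoints $x$ of $f$. By (R2A) an endpoint in $X_0$ receives nothing, and by (R2B) an endpoint outside $X_0$ receives $\frac12$. It therefore suffices to count, for each type of $f$, how many of its endpoints lie outside $X_0$.

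For (a), let $f=uv\in T'\cap E$ with neither $u$ nor $v$ in $X_0$. Both endpoints are distinct, since $G$ is a looped simple graph. Each endpoint receives $\frac12$ by (R2B), so $\mu'(f)=1-\frac12-\frac12=0$.

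For (b), we first treat a loop $f\in T'\cap L$. Its unique endpoint $x$ is not in $X_0$. For $f\in T$ this is because admissibility requires every loop of $T$ to be induced by $V\setminus X_0$. For $f\in F$ (and in general for $T'$) it follows from the observation, via minimality of $|V|+|L|+|F|$, that no member of $T'$ is induced by $X_0$. A loop at $x$ is induced by $\{x\}$, so $x\notin X_0$. Thus $x$ takes $\frac12$ by (R2B) and $\mu'(f)=\frac12$. Here we read (R2) as distributing one $\frac12$ per incidence of the loop, that is, one per endpoint.

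Next let $f=uv\in T'\cap E$ with an endpoint in $X_0$, say $u\in X_0$. Since $f$ is not induced by $X_0$, we get $v\notin X_0$. Then $u$ receives $0$ by (R2A) and $v$ receives $\frac12$ by (R2B), giving $\mu'(f)=\frac12$.

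The only step needing care is this last one, namely justifying that an edge of $T'$ meeting $X_0$ has exactly one endpoint there. It rests on the earlier remark that, by minimality of the counterexample, no element of $T'$ is induced by $X_0$. For an element of $T$ this also follows from the optimality of the cover: moving such an element into the looped part would not increase $\val_t(\X)$, but would lower it by $1$, contradicting minimality. For an element of $F$ it comes from the minimality of $|F|$. With these observations the three cases are exhaustive and the claim follows.
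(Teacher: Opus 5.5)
Your proposal is correct and is essentially the paper's argument, which simply says the claim follows from (R2A)--(R2B) because a loop has one incident vertex and a simple edge has two. You also make explicit a fact the paper uses silently via its earlier remark that no element of $T'$ is induced by $X_0$: a loop of $T'$ sits at a vertex outside $X_0$, and an edge of $T'$ meeting $X_0$ has exactly one endpoint there. Your justification of that remark (optimality of the cover for elements of $T$, minimality of $|F|$ for elements of $F$) is sound.
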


\begin{proof}[Proof of Claim \ref{cla:mu'_f}]
    Follows from (R2A-B) as a loop has only one incident vertex and a simple edge has two of them.
\end{proof}

\noindent\textbf{Case 1.} $T'=\emptyset$.\\

Then we have $F=\emptyset$ as $F\subseteq T'$ and may assume $X_i=V$ for some $0\leq i\leq k$, by the minimality of $|V|+|L|+|F|$ and $(V,E)$ is a complete graph by the maximality of $|E|$.
First suppose $|V|<2t$. Then since $G$ is weakly $2t$-balanced, $G$ contains $K_{|V|}^{[2t-|V|]}$ as a spanning subgraph. Suppose $|V|$ is odd and recall that $K_{|V|}$ can be partitioned into $\frac{|V|-1}{2}$ Hamilton cycles (c.f.\ \cite{EL}). Thus the fact that each vertex in $G$ has at least $2t-|V|$ loops implies that $G$ has a spanning $t$-tight subgraph
as 
$$\frac{|V|-1}{2}+(2t-|V|)=t+(t-\frac{|V|+1}{2})\geq t.$$
Moreover, this spanning subgraph does not contain a copy of $K_{d+2}$ because $d+2\geq 2t+1$. Thus $G^{[d-t]}$ has a spanning $d$-tight, $K_{d+2}$-free subgraph such that each vertex of this subgraph has at least $(d-t)\geq \lfloor \frac{d}{2}\rfloor$ loops. Hence $G^{[d-t]}$ is $\mathcal{L}_d$-rigid by Theorem \ref{thm:d-dimchar}. So suppose $|V|$ is even. Recall that $K_{|V|}$ can now be partitioned into $\frac{|V|}{2}-1$ Hamilton cycles and a perfect matching (that we shall not need, cf.\ \cite{EL}). The fact that each vertex has at least $2t-|V|$ loops now implies that
$$
\frac{|V|}{2}-1+(2t-|V|)=t+(t-\frac{|V|}{2}-1)\geq t
$$
which again shows that $G$ has a spanning $t$-tight subgraph. We can now proceed similarly to deduce that $G^{[d-t]}$ is $\mathcal{L}_d$-rigid.

Thus we may assume $|V|\geq 2t$. Since $G$ is weakly $2t$-balanced, it has at least $2t$ vertices with loops. Let $S$ denote a set of vertices in $X_0$ with loops and with $|S|=2t$. Thus $G[S]$ contains $K_{2t}^{[1]}$ as a spanning subgraph.
As above we can use the fact that the edge set of $K_{2t}$ can be partitioned into $t-1$ Hamilton cycles and a perfect matching. By also considering a loop for each vertex in $S$ we deduce that the graph  $G[S]$ has a spanning $t$-tight subgraph which is $K_{d+2}$-free as $d+2\geq 2t+1>2t=|S|$. Thus $(G[S])^{[d-t]}$ has a spanning $d$-tight, $K_{d+2}$-free subgraph such that each vertex of in this subgraph has at least $(d-t)\geq \lfloor \frac{d}{2}\rfloor$ loops. Hence $W=(G[S])^{[d-t]}$ is $\mathcal{L}_d$-rigid by Theorem \ref{thm:d-dimchar}. Since each remaining vertex $v\in V\setminus S$ has $2t$ neighbours in $S$ and $(d-t)$ loops in $G^{[d-t]}$, we can sequentially add each such vertex to $W$ by a $d$-dimensional 0-loop extension operation and preserve $\mathcal{L}_d$-rigidity by Lemma \ref{lem:ddim-kloop}. \\

\noindent\textbf{Case 2.} $T'\neq \emptyset$.\\

Let $Y\subset X_0$ denote the set of vertices whose neighbourhood is contained in $X_0$. Let $Z=X_0\setminus Y$, that is $Z$ is the set of vertices in $X_0$ that have at least one neighbour in $V\setminus X_0$. Let $T_l'$ denote the set of loops contained in $T'$. Since $G$ is weakly $2t$-balanced we have $|Z|+|T_l'|\geq 2t$. Then using Eq (\ref{eq:total_charge}) and setting $k=0$ when $d\geq 2t$, we have
\begin{align*}
    d|V|+t>&\mu_{\text{total}}\\
        =&\sum_{v\in V}\mu'(v)+\mu'(V)+\sum_{f\in T'}\mu'(f)+\sum_{i=0}^k\mu'(X_i)\\
    \geq& d|V|+0+2t\cdot \frac{1}{2}+0+0\\
        =&d|V|+t,
\end{align*}
where the last inequality follows from the claims above and the facts that there are $|Z|$ simple edges in $T'$ incident with vertices in $X_0$ and $|Z|+|T_l'|\geq 2t$, a contradiction.
\end{proof}

\section{Essential Balancedness and 2-rigidity}

Let $G=(V,E,L)$ be a looped simple graph and $x\in V$ be a vertex. We use $l_G(x)$ (or $l(x)$ when the corresponding graph $G$ is clear from the context) to denote {\em the number of loops at} $x$ in $G$.
We now prove Theorem \ref{thm:weakly6balanced} showing that we can use weakly 6-balancedness instead of 6-balancedness in the statement of Theorem \ref{thm:2D-6balanced} and its conclusion still holds.

\begin{proof}[Proof of Theorem \ref{thm:weakly6balanced}]
Let $G=(V,E,L)$ and suppose $G-F$ is a counterexample with $|V|$ being as small as possible, and with respect to this, $|L|$ is as small as possible. If all vertices of $G$ had at most one loop then $G$ would be 6-balanced and we could apply Theorem \ref{thm:2D-6balanced} to deduce that $G-F$
is $\mathcal{L}_2$-rigid. Thus there exists a vertex $v\in V$ with at least two loops $l_1,l_2$. Let $N_G(v)=\{x_1,x_2,\ldots,x_t\}$.
By the minimality of $|L|$, the graph $G_1=G-l_1$ is not weakly 6-balanced. Thus there exists a set $T\subseteq V$ such that some component $C_1$ of $G_1-T$ has less than $6-|T|$ loops. Note that $v\in C_v$ since $E(G)=E(G_1)$ and the only missing member in $G_1$ is a loop at $v$ compared to $G$. Since $C_v$ has at least one loop, namely $l_2$, we have $|T|\leq 4$. This forces $V(C_v)=\{v\}$ as otherwise $G-(T\cup\{v\})$ would have a connected component with less than $6-|T\cup\{v\}|$ loops, contradicting the fact that $G$ is weakly 6-balanced. This in turn implies that $N(v)\subseteq T$ and so $|N_G(v)|\leq 4$. Consider the graph $G'=G-F$. We split the proof into two cases.\\

\noindent\textbf{Case 1.} $v$ has at least two loops, say $l_1,l_2$, in $G'$.\\

Take a generic realisation $(G-F,p,q)$ of $G-F$. Form a graph $H$ from $G$ by removing $v$ and adding a loop $s_i$ to each $x_i\in N_G(v)$.

\begin{claim}\label{cla:H_weakly_6_balanced}
The graph $H$ is weakly 6-balanced. 
\end{claim}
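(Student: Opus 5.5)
To verify that $H$ is weakly 6-balanced, fix $S\subseteq V(H)=V\setminus\{v\}$ with $|S|\leq 6$ and a connected component $D$ of $H-S$. We must show that $D$ has at least $6-|S|$ loops. The plan is to compare $D$ with components of $G-S$ or of $G-(S\cup\{v\})$ and apply the weak 6-balancedness of $G$. Throughout we use what has already been established: $N_G(v)=\{x_1,\dots,x_t\}$ with $t\leq 4$, and $H$ is obtained from $G-v$ by adding one loop $s_i$ at each $x_i$.

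\textbf{Case A: $N_G(v)\subseteq S$.} Here $v$ forms its own component of $G-S$, and every other component of $G-S$ is also a component of $H-S$ with exactly the same loops, since the new loops $s_i$ sit on vertices of $S$. So $D$ is a component of $G-S$ not equal to $\{v\}$, and weak 6-balancedness of $G$ gives the required $6-|S|$ loops. When $|S|=6$ there is nothing to check.

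\textbf{Case B: some $x_i\notin S$.} Consider $S'=S\cup\{v\}$, which is a vertex set of $G$. If $|S'|\leq 6$, then $H-S$ and $G-S'$ have the same vertex set and the same simple edges. Their only difference is the added loops $s_i$ at the $x_i\notin S$. Hence each component $D$ of $H-S$ is a component of $G-S'$, possibly with extra loops. By weak 6-balancedness of $G$, $D$ already has at least $6-|S|-1$ loops of $G$.

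It remains to find one more loop. If $D$ contains some $x_i$, the new loop $s_i$ supplies it. If $D$ contains no neighbour of $v$, then $D$ is also a component of $G-S$ (it is not joined to $v$), so it directly has at least $6-|S|$ loops. If $|S|=6$, the bound is trivial.

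I expect the main obstacle to be exactly this bookkeeping in Case B. The point is that the single loop lost when passing from $S$ to $S\cup\{v\}$ must be recovered, and it is recovered either from an added loop $s_i$ or by noting that $D$ does not meet $N_G(v)$. I would also double-check that $|V(H)|$ is large enough for the definition, but weak balancedness places no requirement on the number of vertices, so no extra condition should be needed. Combining the cases, every component of $H-S$ has at least $6-|S|$ loops, so $H$ is weakly 6-balanced.
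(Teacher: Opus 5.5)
Your proof is correct and takes essentially the same route as the paper. Both split according to whether the component meets $N_G(v)$: if it does not, it is already a component of $G-S$; if it does, it is a component of $G-(S\cup\{v\})$ that gains at least one extra loop $s_i$. Your Case A is already covered by the ``no neighbour of $v$'' subcase of Case B, and the bound $t\leq 4$ is never actually used.
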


\begin{proof}[Proof of Claim \ref{cla:H_weakly_6_balanced}]
Suppose the contrary and let $U\subseteq V(H)$ be a set for which there exists a connected component $C$ of $H-U$ with less than $6-|U|$ loops. If $V(C)\cap N_G(v)=\emptyset$ then $C$ would be a connected component in $G-U$, a contradiction. Hence $x_i\in V(C)$ for some $1\leq i\leq t$. The fact that $s_i$ is a loop at $x_i$ in $H$ implies $C$ has at least one loop and this forces $|U|\leq 4$. This further forces that the connected component containing $x_i$ in $G-(U\cup\{v\})$ has less than $6-|U\cup\{v\}|$ loops. This contradicts the fact that $G$ is weakly 6-balanced and completes the proof of the claim.
\end{proof}
Set $F_H:=F\cap (E(H)\cup L(H))\cup\{s_i:vx_i \in E(G)\cap F\}$, that is $F_H$ is obtained from $F$ by replacing its edges of the form $vx_i$ (if any) by $s_i$. By the minimality of $|V|$ and Claim \ref{cla:H_weakly_6_balanced}, $H-F_H$ is $\mathcal{L}_2$-rigid. Set $q_H(l)=q(l)$ for all $l\in L(G-F)$ and $q_H(s_i)=p(x_i)-p(v)$ for all $s_i\in L(H-F_H)$, and $p_H=p|_{V\setminus \{v\}}$. Then since $(p,q)$ is generic, $(p_H,q_H)$ is also generic, implying that $(H-F_H,p_H,q_H)$ is rigid in $\R^2$. From $(H-F_H,p_H,q_H)$, we will get back to $G'=G-F$ by rigidity preserving operations. 
To this end first add $v$ to $(H-F_H,p_H,q_H)$ at $p(v)$ by a 2-dimensional 0-loop extension operation with its loops $l_1,l_2$ (with constraints $q(l_1)$ and $q(l_2)$) in $G'=G-F$. This preserves rigidity since $(H-F_H,p_H,q_H)$ is rigid in $\R^2$ and the position of $v$ is fixed after this operation.

The fact that $q_H(s_i)=p(v)-p(x_i)$ implies that for all $s_i\notin F_H$, if we further add the edge $vx_i$, we will get a minimally dependent set of rows indexed by $s_i,vx_i,l_1,l_2$ in the rigidity matrix of the linearly constrained framework we get after this operation.
This holds for each $s_i$ since the constraint corresponding to $s_i$ is determined by the set of constraints $\{vx_i,l_1,l_2\}$; they already restrict the infinitesimal motions of $x_i$ to the line whose normal is $q_H(s_i)=p(x_i)-p(v)$. Thus we can sequentially add $vx_i\notin F$ and remove $s_i$, for each $i$, and preserve rigidity at each step; see Figure \ref{fig:replacing_si_by_vxi} for an illustration when there are two $s_i$ which are not in $F_H$.
\begin{figure}
    \centering
    \begin{tikzpicture}[scale=.8]
        \node[roundnode] (v) at (0,2) [label=right:$v$]{}
            edge[in=70, out=40, loop, label={[label distance=.2cm]55:$l_2$}] ()
            edge[in=110, out=140, loop, label={[label distance=.2cm]125:$l_1$}] ();
        \draw (0,0) ellipse (2cm and 1cm);
        \node at (0,0) [label={[label distance=.6cm]270:$(H-F_H\text{,}p_H\text{,}q_H)$}]{};
        \node[roundnode] (x1) at (-1,0) [label=left:$x_1$]{}
            edge[in=255, out=285, loop, label={[label distance=.2cm]275:$s_1$}] (x1);
        \node[roundnode] (x2) at (1,0) [label=right:$x_2$]{}
            edge[in=255, out=285, loop, label={[label distance=.2cm]265:$s_2$}] (x1);
        \begin{scope}[xshift=5cm]
            \node[roundnode] (v) at (0,2) [label=right:$v$]{}
            edge[in=70, out=40, loop, label={[label distance=.2cm]55:$l_2$}] ()
            edge[in=110, out=140, loop, label={[label distance=.2cm]125:$l_1$}] ();
        \draw (0,0) ellipse (2cm and 1cm);
        \node[roundnode] (x1) at (-1,0) [label=left:$x_1$]{}
            edge[] (v);
        \node[roundnode] (x2) at (1,0) [label=right:$x_2$]{}
            edge[in=255, out=285, loop, label={[label distance=.2cm]265:$s_2$}] (x1);            
        \end{scope}
        \begin{scope}[xshift=10cm]
            \node[roundnode] (v) at (0,2) [label=right:$v$]{}
            edge[in=70, out=40, loop, label={[label distance=.2cm]55:$l_2$}] ()
            edge[in=110, out=140, loop, label={[label distance=.2cm]125:$l_1$}] ();
        \draw (0,0) ellipse (2cm and 1cm);
        \node[roundnode] (x1) at (-1,0) [label=left:$x_1$]{}
            edge[] (v);
        \node[roundnode] (x2) at (1,0) [label=right:$x_2$]{}
            edge[] (v);            
        \end{scope}
    \end{tikzpicture}
    \caption{An illustration of Case 1 in the proof of Theorem \ref{thm:weakly6balanced}. For simplicity suppose only $s_1,s_2$ are not in $F_H$. The framework inside the ellipse on the left is $(H -F_H,p_H,q_H)$. We can add $v$ with $l_1$ and $l_2$ and preserve rigidity in $\R^2$. Then when we add $vx_1$, we get a circuit $\{s_1,vx_1,l_1,l_2\}$ (since $q_H(s_i)=p(x_i)-p(v)$) and so we can remove $s_1$ and preserve rigidity in $\R^2$ and this gives the framework in the middle. Repeating for $s_2$ gives the framework on the right, which is $(G-F,p,q)$ (possibly with less loops at $v$) by our construction.}
    \label{fig:replacing_si_by_vxi}
\end{figure}
If $v$ has loops other than $l_1,l_2$ we can now add them as well. Since this process ends with $(G-F,p,q)$, we deduce that $(G-F,p,q)$ is rigid in $\R^2$ as required.\\

\noindent\textbf{Case 2.} $v$ has at most one loop in $G'=G-F$.\\

The fact that $l_1$ and $l_2$ are loops at $v$ in $G$ implies that at least one of them, say $l_1$, is in $F$. Suppose $V\setminus (T\cup\{v\})=\emptyset$. Then since $|T|\leq 4$, the fact that $G$ is weakly 6-balanced implies that $G$ contains $K_m^{[7-m]}$ as a spanning subgraph for some $m\leq 5$ and the statement easily follows. Hence there exists a vertex $x\in V\setminus (T\cup\{v\})$. The fact that $N_G(v)\subseteq T$ implies $x\notin N_G(v)$. Consider the graph $H'=G-l_1+vx$.

\begin{claim}\label{cla:H_weakly_6_balanced2}
The graph $H'$ is weakly 6-balanced.    
\end{claim}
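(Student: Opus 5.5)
We argue by contradiction, in the same way as the proof of Claim \ref{cla:H_weakly_6_balanced}. Suppose $U\subseteq V(H')$ and $C$ is a connected component of $H'-U$ with fewer than $6-|U|$ loops. We compare $C$ with the components of $G-U$ and, where needed, of $G-(U\cup\{v\})$. Recall the facts available: $H'=G-l_1+vx$ has the same vertex set as $G$; $v$ still carries the loop $l_2$ (and possibly others) in $H'$; $N_G(v)\subseteq T$ with $|T|\leq 4$; and $x\notin T\cup\{v\}$, so $x\notin N_G(v)$.

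\textbf{Case A: $C$ does not contain $v$.} Then $C$ contains no endpoint of the only new edge $vx$, except possibly $x$. If $x\notin V(C)$, then $C$ is a component of $G-U$ with the same loops, contradicting weak 6-balancedness of $G$. If $x\in V(C)$ and $v\in U$, then $vx$ is absent from $H'-U$. Hence $C$ is again a component of $G-U$ with identical loops ($l_1$ lies at $v\in U$), a contradiction. If $x\in V(C)$ and $v\notin U$, then $v$ is joined to $x$ in $H'-U$, so $v\in V(C)$, which contradicts the assumption of this case.

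\textbf{Case B: $v\in V(C)$.} Then $C$ contains the loop $l_2$, so $|U|\leq 4$. Moreover $x\in V(C)$, since $x\notin U$ (otherwise $vx$ is gone, and we treat this below). Removing $v$, the graph $C-v$ splits into pieces. Each piece is a union of components of $G-(U\cup\{v\})$ that contain a $G$-neighbour of $v$, together with the component $D$ of $G-(U\cup\{v\})$ containing $x$. Now $D$ is a component of $G-(U\cup\{v\})$ with $|U\cup\{v\}|\leq 5$, so it has at least $6-|U|-1$ loops. All these loops lie in $C$, and $C$ also contains $l_2$. Hence $C$ has at least $6-|U|$ loops, a contradiction.

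If instead $x\in U$, then $C$ is the component of $G-l_1-U$ containing $v$. If $C$ has a vertex other than $v$, the same argument applied to any other component of $G-(U\cup\{v\})$ inside $C$ yields $5-|U|$ loops plus $l_2$, a contradiction. So suppose $C=\{v\}$. Then $N_G(v)\subseteq U$, and $v$ has fewer than $6-|U|$ loops in $H'$, i.e.\ $l_G(v)-1<6-|U|$.

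This last subcase is the main obstacle. Here I would exploit the choice of $T$: the component of $G_1-T$ at $v$ is exactly $\{v\}$ with fewer than $6-|T|$ loops, which gives $l_G(v)=6-|T|$ (weak balancedness of $G$ supplies the lower bound). If this is insufficient, I would use the freedom to choose $x$. Pick $x$ such that, whenever $N_G(v)\subseteq U$ and $x\in U$, we get $|U|\geq |N_G(v)|+1$, and combine this with $l_G(v)\geq 6-|N_G(v)|$, obtained from $G$ with the set $N_G(v)$ removed. Then $l_G(v)-1\geq 5-|N_G(v)|\geq 6-|U|$, which is the desired contradiction.
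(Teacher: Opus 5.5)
Your proof is correct and follows essentially the paper's route: you reduce to the case $V(C)=\{v\}$ with $N_G(v)\cup\{x\}\subseteq U$, then apply weak 6-balancedness of $G$ to a set that isolates $v$ but omits $x$ (you use $N_G(v)$, the paper uses $U\setminus\{x\}$). No special choice of $x$ is needed, because $|U|\geq |N_G(v)|+1$ follows automatically from $x\notin N_G(v)$; the tentative detour through $l_G(v)=6-|T|$ can be dropped and the last step stated outright.
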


\begin{proof}[Proof of Claim \ref{cla:H_weakly_6_balanced2}]
Suppose the contrary and let $U$ be a set for which $H'-U$ has a connected component $C$ with less than $6-|U|$ loops. Since $E(G)\subseteq E(H')$ and $G$ is weakly 6-balanced, we must have $v\in V(C)$. The fact that $v$ has at least one loop in $H'$ implies $|U|\leq 4$. Suppose $|V(C)|\geq 2$ and take a vertex $u$ in $V(C)\setminus \{v\}$. Then the component containing $u$ in $G-(U\cup\{v\})$ has less than $6-|U\cup\{v\}|$ loops, a contradiction. Thus $V(C)=\{v\}$ and we forcefully have $N_{H'}(v)\subseteq U$. Hence the facts that $|N_{H'}(v)|=|N_G(v)|+1$ and $v$ has one more loop in $G$ than it has in $H'$, imply that $C$ is also a connected component of $G-(U\setminus \{x\})$ with less than $6-|U\setminus\{x\}|$ loops, a contradiction to the fact that $G$ is weakly 6-balanced.
\end{proof}

Let us set $F':=\{vx\}\cup(F\setminus\{l_1\})$. Combining the minimality of $|L|$ and Claim \ref{cla:H_weakly_6_balanced2}, the graph $H'-F'$ is $\mathcal{L}_2$-rigid and this completes the proof since $H'-F'=G-F$.
\end{proof}

We also give an alternative proof utilising the discharging method.

\begin{proof}[Proof of Theorem \ref{thm:weakly6balanced} using the discharging method]
Let $G-F$ be a counterexample with $|V|+|L|+|F|$ as small as possible and with respect to this $|E|$ being as large as possible. Let $F_e=F\cap E$, $F_l=F\cap L$ and
$H=G-F=(V,E\setminus F_e,L\setminus F_l)$.
Combining the fact that $H=G-F$ is not $\mathcal{L}_2$-rigid and \cite[Theorem 2.7]{G}, there exists a set $L'\subseteq (L\setminus F_l)$
and an admissible 1-thin cover $\X=\{X_0,X_1,\ldots,X_k\}$ of $H-L'$ such that 
\begin{equation}\label{eq:less_than_2V}
 r_2^{lc}(H)=\val(\X)=|L'|+2|X_0|+\sum_{i=1}^k(2|X_i|-3)<2|V|.   
\end{equation}
We choose $\X$ such that the set $X_0$, i.e., the looped member is maximal. Note that by the minimality of $|V|+|L|+|F|$, no edge or loop in $L'\cup F$ is induced by the vertices in $X_0$.
Let $e_i=x_iy_i$, $1\leq i\leq |F_e|$ denote the edges in $F_e$ and put $X_{k+i}=\{x_i,y_i\}$, for all $1\leq i\leq |F_e|$. Let also $\bar L=L'\cup F_l$. Consider the family
$\mathcal{X}'=\{\bar L\}\cup\{X_i:0\leq i\leq k+|F_e|\}$. In order to apply the discharging method we will assign a starting charge $\mu(X)$ to each set $X$ in this family and reallocate the charges in order to get a contradiction. To this end let $\mu(\bar L)=|\bar L|$, $\mu(X_0)=2|X_0|$ and $\mu(X_i)=2|X_i|-3$ for all $1\leq i\leq k+|F_e|$. Eq.\ (\ref{eq:less_than_2V}) implies that the total charge $\mu_{\text{total}}$ satisfies
\begin{equation}\label{eq:total_mu}
    \mu_{\text{total}}=\mu(\bar L)+\sum_{i=0}^{k+|F_e|}\mu(X_i)=\val(\X)+|F|<2|V|+3.
\end{equation}
Let $\mathcal{X}'(x)$ be the set of members of $\mathcal{X}'$ that contain $x\in V$.
For a vertex $x\in X$ let $\sigma_x(X)$ denote the charge $x$ gets from $X$. The following are the discharging rules we shall use.
\begin{itemize}
    \item R1 - For each $x\in V\setminus X_0$, $\sigma_x(\bar L)=\frac{l(x)}{2}$,
    \item R2 - For $x\in X_0$, if $|\X'(x)|\geq 2$ then $\sigma_x(X_0)=\frac{3}{2}$; and $\sigma_x(X_0)=2$ when $\X'(x)=\{X_0\}$,
    \item R3 - For  $x\in X_i$, $1\leq i\leq k+|F_e|$
        \begin{itemize}
            \item[$\diamond$] R3A - If $|X_i|\leq6$, $\sigma_x(X_i)=2-\frac{3}{|X_i|}$,
            \item[$\diamond$] R3B - If $|X_i|\geq 7$, $\sigma_x(X_i)=\frac{3}{2}$ if $|\X'(x)|\geq 2$, and $\sigma_x(X_i)=2-\frac{l(x)}{2}$ if $\X'(x)=\{X_i\}$.
        \end{itemize}
\end{itemize}
It is straightforward to see that every charge transfer $\sigma_x(X)$ is non-negative except the charge transferred via (R3B). In fact (R3B) transfers a non-negative charge as well. The only negative charge possibility comes from the part $\sigma_x(X_i)=2-\frac{l(x)}{2}$ when $l(x)\geq 5$. We shall show that $l(x)\leq 4$ for every $x\in V\setminus X_0$. For a contradiction suppose this is not the case. Then $x$ would have at least two loops in $G-F$. This then contradicts the choice of $\X$ as we can remove the loops at $x$ (loops in $G-F$) from $L'$, set the looped member of the cover as $X_0\cup\{x\}$ and construct another admissible 1-thin cover with a value less than or equal to the value of $\X$. This contradicts either the fact that $\X$ hits the rank or the maximality of $X_0$. Thus every vertex in $V\setminus X_0$ has at most four loops in $G$ and so every charge transfer is non-negative. Let $\mu'(x)$ denote the final charge of $x$ after the discharging process. Let $Y\subset X_0$ denote the set of vertices $y\in X_0$ with $N(y)\subseteq X_0$ and put $Z=X_0\setminus Y$.

\begin{claim}\label{cla:final_charges}
We have $\mu'(\bar L)=\frac{|\bar L|}{2}$, $\mu'(X_0)=\frac{|Z|}{2}$, $\mu'(X_i)\geq 0$ for all $1\leq i\leq k+|F_e|$ and $\mu'(v)\geq 2$ for all $v\in V$.
\end{claim}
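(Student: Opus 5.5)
The plan is to verify the four assertions of the claim one at a time, computing each final charge directly from (R1)--(R3). The two structural facts used throughout come from admissibility. First, every loop not in $\bar L$ is induced by $X_0$, so every loop at a vertex $x\notin X_0$ lies in $\bar L$. Second, no edge or loop of $L'\cup F$ is induced by $X_0$, so every loop of $\bar L$ sits at a vertex outside $X_0$.

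\emph{The set $\bar L$.} By (R1), $\bar L$ pays $l(x)/2$ to each $x\in V\setminus X_0$. By the two facts above, $\sum_{x\notin X_0} l(x)=|\bar L|$. Hence $\mu'(\bar L)=|\bar L|-|\bar L|/2=|\bar L|/2$.

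\emph{The set $X_0$.} By (R2), $\mu'(X_0)=2|X_0|-\tfrac32 a-2b$, where $a$ counts vertices $x\in X_0$ with $|\X'(x)|\geq 2$ and $b$ counts those with $\X'(x)=\{X_0\}$. It therefore suffices to show that $\{x\in X_0:|\X'(x)|\geq2\}=Z$; this gives $\mu'(X_0)=2|Y|+2|Z|-2|Y|-\tfrac32|Z|=|Z|/2$.
\begin{itemize}
\item[($\supseteq$)] A vertex $z\in Z$ has a neighbour outside $X_0$. The edge to it is covered by some $X_i$ with $1\le i\le k$, or lies in $F_e$ and so is some $X_{k+j}$. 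Either way $z$ lies in a second member.
\item[($\subseteq$)] Suppose $y\in Y$ lies in some $X_i$ with $i\geq1$. Every edge at $y$ is induced by $X_0$, so deleting $y$ from $X_i$ (or discarding $X_i$ if it becomes too small) yields an admissible 1-thin cover of no larger value. I expect this to contradict $\val(\X)=r_2^{lc}(H)$, or the minimality of the counterexample if the member is some $X_{k+j}$ coming from $F_e$. This step needs care, since it must be checked against the exact definition of 1-thin covers in \cite{G}.
\end{itemize}

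\emph{The members $X_i$, $1\le i\le k+|F_e|$.} Under (R3A), $X_i$ pays $|X_i|(2-3/|X_i|)=2|X_i|-3=\mu(X_i)$, so $\mu'(X_i)=0$. Under (R3B), write $U$ for the vertices of $X_i$ lying in another member and $P$ for the private ones. A direct computation gives
\[
\mu'(X_i)=\tfrac12\Big(|U|+\sum_{x\in P}l(x)\Big)-3,
\]
so we need $|U|+\sum_{x\in P}l(x)\geq 6$.
\begin{itemize}
\item If $P=\emptyset$ this holds, since $|U|=|X_i|\geq7$.
\item If $P\neq\emptyset$ and $|U|\le5$, the neighbours of each private vertex lie in $X_i$, because the cover must induce its edges. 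So some component of $G-U$ lies inside $P$.
\item Weak $6$-balancedness then gives that component at least $6-|U|$ loops, all at vertices of $P$, and the inequality follows.
\end{itemize}

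\emph{The vertices.} A vertex lying only in $X_0$ receives $2$. A vertex of $X_0$ in another member $X_i$ receives $\tfrac32$ from $X_0$, plus either $2-3/|X_i|\geq\tfrac12$ (since $|X_i|\geq2$) or $\tfrac32$. For $v\notin X_0$ I would combine two inequalities:
\begin{itemize}
\item weak $6$-balancedness with $T=N(v)$ gives $l(v)+\deg(v)\geq6$ when $\deg(v)\le5$;
\item a member $X_i$ containing $v$ contributes at least $\tfrac12(|X_i|-1)$ when $|X_i|\le3$, and at least $\tfrac54$ when $|X_i|\geq4$.
\end{itemize}
Each such member covers at most $|X_i|-1$ neighbours of $v$. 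A short case analysis on the number and sizes of the members containing $v$ should then give total charge at least $2$. The case of a private vertex in an (R3B) member is immediate, since it receives $l(v)/2+2-l(v)/2=2$.

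I expect the main obstacle to be this last case analysis for $v\notin X_0$ when $v$ lies in several small members, together with justifying the equality $\{x\in X_0:|\X'(x)|\geq2\}=Z$ via the choice of $\X$.
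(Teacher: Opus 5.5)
Your treatment of $\bar L$, $X_0$ and the members $X_i$ is correct and follows the paper's route. It also fills in two steps the paper only asserts: the identity $\{x\in X_0:|\X'(x)|\geq 2\}=Z$, and the bound $|Z_i|+l(Y_i)\geq 6$, which you obtain from a component of $G-U$ lying inside the private vertices. In the ($\subseteq$) step, state that the modified cover has \emph{strictly} smaller value: it drops by $2$, or by $1$ if a $2$-element $X_i$ is discarded. That strict drop, not ``no larger value'', is what contradicts $\val(\X)=r_2^{lc}(H)$. When $y\in X_{k+j}$, the contradiction is simply that the $F_e$-edge at $y$ would be induced by $X_0$.

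The genuine gap is in the final step, $v\notin X_0$. The inequalities you plan to combine are too weak for any case analysis to succeed. Two configurations show this:
\begin{itemize}
\item $v$ lies only in a member of size $6$ and $l(v)=1$;
\item $v$ lies in one member of size at least $6$ (non-private, if the size is at least $7$) and one $2$-element member, with $\deg(v)\geq 6$ and $l(v)=0$.
\end{itemize}
In both, your bounds give only $\frac54+\frac12=\frac74$, and weak $6$-balancedness supplies no further loop. The bound $\frac54$ for $|X_i|\geq 4$ is too lossy. You need the exact charges: $2-\frac{3}{|X_i|}$ under (R3A), which equals $\frac32$ at $|X_i|=6$, and $\frac32$ for non-private vertices under (R3B).

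With these values, the paper's split on the largest member $X_1\ni v$ goes through, where $m$ is the number of members containing $v$:
\begin{itemize}
\item Size at least $7$: a private vertex receives exactly $2$; otherwise $v$ receives $\frac32+\frac12$.
\item Size $6$: $v$ receives $\frac32$, and if $m=1$ then $\deg(v)\leq 5$ forces $l(v)\geq 1$.
\item Size $3$ to $5$: $v$ receives at least $1$. Either some second member has size at least $3$, or all other members are edges, in which case weak $6$-balancedness gives $l(v)+(m-1)\geq 2$.
\item Size $2$: $l(v)+m\geq 6$, so $v$ receives at least $3$.
\end{itemize}
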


\begin{proof}[Proof of Claim \ref{cla:final_charges}]
The statements that $\mu'(\bar L)=\frac{|\bar L|}{2}$, that $\mu'(X_0)=\frac{|Z|}{2}$ and that $\mu'(X_i)\geq 0$ for all $1\leq i\leq k+|F_e|$ with $|X_i|\leq 6$ directly follow from (R1), (R2) and (R3A), respectively.
Now let us prove $\mu'(X_i)\geq 0$ for all $|X_i|\geq 7$. Let $Z_i\subseteq X_i$ denote the set of vertices $x$ in $X_i$ for which 
$|\X'(x)|\geq 2$ and $Y_i=X_i\setminus Z_i$.
Let $l(Y_i)$ denote the number of loops induced by the vertices in $Y_i$. Since $G$ is weakly 6-balanced we have $|Z_i|+l(Y_i)\geq 6$. Using this and applying (R3B), we get $$\mu'(X_i)=2|X_i|-3-(\frac{3|Z_i|}{2}+2|Y_i|-\frac{l(Y_i)}{2})\geq \frac{|Z_i|+l(Y_i)}{2}-3\geq 0.$$

Finally let us prove $\mu'(v)\geq 2$ for all $v\in V$. If $v\in X_0$ then $\mu'(v)\geq 2$ easily follows. Therefore we may assume $v\notin X_0$. By relabeling, if necessary, we may assume $X_1,X_2,\ldots,X_m$ are the sets containing $v$ such that $|X_1|\geq |X_2|\geq \cdots \geq |X_m|$. If $|X_1|\geq 7$ and $m=1$, then $\mu'(v)=2-\frac{l(v)}{2}+\frac{l(v)}{2}= 2$ after $v$ gets $2-\frac{l(v)}{2}$ from $X_1$ and $\frac{l(v)}{2}$ from $\bar L$. If  $|X_1|\geq 7$ and $m\geq 2$, then $\mu'(v)\geq  \frac{3}{2}+\frac{1}{2}= 2$ after $v$ gets $\frac{3}{2}$ from $X_1$ and at least $\frac{1}{2}$ from $X_2$.

Thus we may assume $|X_1|\leq 6$. First suppose $|X_1|=6$, then by weak 6-balancedness either $l(v)\geq 1$ or $m\geq 2$ holds. Both cases imply $\mu'(v)\geq \frac{3}{2}+\frac{1}{2}=2$ after $v$ gets $\frac{3}{2}$ from $X_1$ and $\frac{1}{2}$ from either $X_2$ or $\bar L$. Next suppose $3\leq |X_1|\leq 5$. Then either $m\geq 2$ with $|X_2|\geq 3$ or $l(v)+(m-1)\geq 2$ holds by weak 6-balancedness. For the former case $\mu'(v)\geq 1+1=2$ after $v$ gets at least $1$ from $X_1$ and at least $1$ from $X_2$. For the latter case
$\mu'(v)\geq 1+\frac{l(v)+(m-1)}{2}\geq 2$ after $v$ gets at least $1$ from $X_1$, $\frac{l(v)}{2}$ from $\bar L$ and at least $\frac{1}{2}$ from each $X_i$, $2\leq i\leq m$. Finally suppose $|X_1|=2$. This forces $l(v)+m\geq 6$ by weak 6-balancedness. Thus $\mu'(v)= \frac{l(v)}{2}+\frac{m}{2}\geq 3$ after $v$ gets $\frac{l(v)}{2}$ from $\bar L$ and $\frac{1}{2}$ from each $X_i$, $1\leq i\leq m$. This completes the proof of the claim.
\end{proof}

Suppose $\X=\{X_0\}$. Then $V=X_0$ and by the maximality of $|E|$, the graph $(V,E)$ is a complete graph. Then for the case $|V|\leq 5$, weak 6-balancedness forces $G$ to contain $K_{|V|}^{[7-|V|]}$ as a spanning subgraph for which we trivially have $G-F$ is $\mathcal{L}_2$-rigid. For the case $|V|\geq 6$, there are at least six vertices with loops in $G$ and since the underlying simple graph is complete this again trivially gives that $G-F$ is $\mathcal{L}_2$-rigid. Thus we may assume $\X$ has members other than $X_0$. Since the loops contained in $\bar L$ are incident with vertices in $V\setminus X_0$, and $Z\subseteq X_0$ is the set of vertices in $X_0$ having at least one neighbour in $V\setminus X_0$, the weak 6-balancedness of $G$ gives $|Z|+|\bar L|\geq 6$. 

Now Claim \ref{cla:final_charges} and Eq.\ (\ref{eq:total_mu}) imply that
\begin{align*}
    2|V|+3>&\mu_{\text{total}}\\
          =&\mu'(\bar L)+\mu'(X_0)+\sum_{i=1}^{k+|F_e|}\mu'(X_i)+\sum_{v\in V}\mu'(v)\\
          \geq&\frac{\bar L}{2}+\frac{|Z|}{2}+0+2|V|\\
          =&2|V|+3,
\end{align*}
a contradiction where the last inequality follows from the fact that $|Z|+|\bar L|\geq 6$.
\end{proof}

We can now combine Theorems \ref{thm:glob_char} and \ref{thm:weakly6balanced} to obtain the following.

\begin{thm}\label{thm:weakly6balancedglobal}
    Every weakly 6-balanced looped simple graph is globally $\mathcal{L}_2$-rigid.
\end{thm}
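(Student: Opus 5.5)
We verify the two conditions of Theorem \ref{thm:glob_char} for a weakly 6-balanced looped simple graph $G=(V,E,L)$, using Theorem \ref{thm:weakly6balanced} as the engine.

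\textbf{Condition (i).} Let $C$ be a connected component of $G$.
\begin{itemize}
\item If $|V(C)|=1$, take $T=\emptyset$ in the definition of weak 6-balancedness. Then $C$ has at least $6$ loops. So $C$ is a single vertex with at least six loops, and in particular a single vertex with two loops.
\item A single vertex with six loops is in any case redundantly $\mathcal{L}_2$-rigid, since deleting one loop leaves at least two generic constraints at the vertex, which pin it in $\R^2$. So if condition (i) is read as requiring exactly two loops, this case is still covered.
\item If $|V(C)|\geq 2$, we first note that $C$ is itself weakly 6-balanced. Indeed, for any $T\subseteq V(C)$ with $|T|\leq 6$, each component of $C-T$ is a component of $G-T$. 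Hence it has at least $6-|T|$ loops.
\end{itemize}
Now apply Theorem \ref{thm:weakly6balanced} to $C$ with $F=\{f\}$ for an arbitrary $f\in E(C)\cup L(C)$; this is allowed since $|F|\le 3$. It gives that $C-f$ is $\mathcal{L}_2$-rigid, so $C$ is redundantly $\mathcal{L}_2$-rigid. This component-wise application is the only subtle point. If preferred, one can apply Theorem \ref{thm:weakly6balanced} directly to $G-f$ and restrict to the component, because $\mathcal{L}_2$-rigidity of a disjoint union is equivalent to $\mathcal{L}_2$-rigidity of each part.

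\textbf{Condition (ii).} This follows directly from the definition of weak 6-balancedness. For any $X\subseteq V$ with $|X|=2\leq 6$, every component of $G-X$ has at least $6-2=4\geq 1$ loops.

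\textbf{Conclusion.} Both hypotheses of Theorem \ref{thm:glob_char} hold, so $G$ is globally $\mathcal{L}_2$-rigid. No discharging or further case analysis is needed.
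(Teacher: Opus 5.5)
Your proposal is correct and follows the paper's route exactly: the paper obtains this theorem by combining Theorem~\ref{thm:glob_char} with Theorem~\ref{thm:weakly6balanced}, where the case $|F|=1$ gives redundant $\mathcal{L}_2$-rigidity. You have written out the routine checks of conditions (i) and (ii) correctly, including the component-wise detail and the single-vertex case, which the paper leaves implicit.
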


Recall that a graph $G$ is called {\em essentially $k$-balanced} if $|V|\geq k+1$ and for every $T\subset V$ with $|T|<k$,
each component of $G-T$ with at least one simple edge has at least one loop. Let $G=(V,E,L)$ be a looped simple graph and suppose that for each simple edge $xy\in E$ either $x$ or $y$ has a loop, that is each simple edge is incident with a loop. Then $G$ is trivially essentially $(|V|-1)$-balanced. Now consider Figure \ref{fig:essential_balanced}. The graph $G$ is 2-balanced and fails being 3-balanced as we can remove $v_1,v_2$ and leave the component consisting of $v_0$ without loops. We can also say that $G$ is essentially 4-balanced, because each simple edge is incident with a loop. The graph $H$ is only 1-balanced and, by the same reason as $G$, it is essentially 4-balanced. The graph $K$ can be obtained from $H$ by adding the edge $v_0v_1$. For the graph $K$, removal of $v_2,v_4$ leaves the edge $v_0v_1$ as a loopless component. Thus, $K$ is not essentially 3-balanced. Since $H$ is essentially 4-balanced, this tells us that edge addition may decrease essential balancedness.

\begin{figure}[hbt]
\begin{center}

\begin{tikzpicture}[scale=1,font=\small]
%\fill[pattern=north west lines,opacity=.6,draw] (0,0) circle [radius=4];
\begin{scope}[rotate=45]

\foreach \i in {1,2,3,4}
{
	\node[roundnode] (\i) at ({(\i-1)*90}: 1cm)  [] {};
}

\foreach \i in {1,4}
{
	\node[label={[label distance=0cm]0:$v_\i$}] at ({(\i-1)*90}: 1cm) {};
}
\foreach \i in {2,3}
{
	\node[label={[label distance=0cm]180:$v_\i$}] at ({(\i-1)*90}: 1cm) {};
}
\foreach \i/\j in {1/2,2/3,3/4,4/1}
{
	\draw[-] (\i) to (\j);
}
\foreach \i in {1,2,3,4}
{
    \pgfmathsetmacro\startp{(\i-1)*90-20}
	\pgfmathsetmacro\endp{(\i-1)*90+20}
	\draw[-, in=\startp, out=\endp,loop] (\i) to (\i);
}
\end{scope}
\node[roundnode,label={[label distance=0cm]0:$v_0$}] (0) at (90: 2cm)  [] {}
    edge[] (1)
    edge[] (2);
\node [label={[label distance=0.5cm]270:$G$}] {};

\begin{scope}[xshift=4cm]
\begin{scope}[rotate=45]

\foreach \i in {1,2,3,4}
{
	\node[roundnode] (\i) at ({(\i-1)*90}: 1cm)  [] {};
}

\foreach \i in {1,4}
{
	\node[label={[label distance=0cm]0:$v_\i$}] at ({(\i-1)*90}: 1cm) {};
}
\foreach \i in {2,3}
{
	\node[label={[label distance=0cm]180:$v_\i$}] at ({(\i-1)*90}: 1cm) {};
}
\foreach \i/\j in {1/2,2/3,3/4,4/1}
{
	\draw[-] (\i) to (\j);
}
\foreach \i in {2,3,4}
{
    \pgfmathsetmacro\startp{(\i-1)*90-20}
	\pgfmathsetmacro\endp{(\i-1)*90+20}
	\draw[-, in=\startp, out=\endp,loop] (\i) to (\i);
}
\end{scope}
\node[roundnode,label={[label distance=0cm]0:$v_0$}] (0) at (90: 2cm)  [] {}
    edge[] (2);
\node [label={[label distance=0.5cm]270:$H$}] {};
\end{scope}

\begin{scope}[xshift=8cm]
\begin{scope}[rotate=45]

\foreach \i in {1,2,3,4}
{
	\node[roundnode] (\i) at ({(\i-1)*90}: 1cm)  [] {};
}

\foreach \i in {1,4}
{
	\node[label={[label distance=0cm]0:$v_\i$}] at ({(\i-1)*90}: 1cm) {};
}
\foreach \i in {2,3}
{
	\node[label={[label distance=0cm]180:$v_\i$}] at ({(\i-1)*90}: 1cm) {};
}
\foreach \i/\j in {1/2,2/3,3/4,4/1}
{
	\draw[-] (\i) to (\j);
}
\foreach \i in {2,3,4}
{
    \pgfmathsetmacro\startp{(\i-1)*90-20}
	\pgfmathsetmacro\endp{(\i-1)*90+20}
	\draw[-, in=\startp, out=\endp,loop] (\i) to (\i);
}
\end{scope}
\node[roundnode,label={[label distance=0cm]0:$v_0$}] (0) at (90: 2cm)  [] {}
    edge[] (1)
    edge[] (2);
\node [label={[label distance=0.5cm]270:$K$}] {};
\end{scope}
%\node[roundnode] at (-0.7cm, 0cm) (u) [] {};
%\node[roundnode] at (0.7cm, 0cm) (v) [] {};
%\node[roundnode] at (0cm, 0.7cm) (a) [] {}
%	edge[] (u)
%	edge[] (v);
\end{tikzpicture}
\end{center}
\caption{Essential balancedness.}
\label{fig:essential_balanced}
\end{figure}

Theorem \ref{thm:main} relaxes the ``weakly 6-balancedness'' condition in Theorem \ref{thm:weakly6balancedglobal} by inserting essential balancedness. We are now ready to give the proof.

\begin{proof}[Proof of Theorem \ref{thm:main}]
First note that weakly 4-balanced graphs satisfy condition (ii) in the statement of Theorem \ref{thm:glob_char}. Thus, by that same theorem, showing $G$ is redundantly $\mathcal{L}_2$-rigid is enough to guarantee global $\mathcal{L}_2$-rigidity. For a contradiction let $G$ be a looped simple graph which is not redundantly $\mathcal{L}_2$-rigid with $|V|+|L|$ being as small as possible.

\begin{claim}\label{cla:4balanced}
For each $v\in V$, the inequality $l(v)\leq 1$ holds and so the graph $G$ is 4-balanced.
\end{claim}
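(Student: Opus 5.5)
We argue inside the minimal counterexample $G$. Suppose some vertex $v$ has $l(v)\geq 2$, with loops $l_1,l_2$, and derive a contradiction; the ``so'' part is then immediate. Indeed, once $l(v)\leq 1$ for every $v$, the number of loops in any component of $G-T$ equals the number of its vertices carrying a loop. Hence weak $4$-balancedness coincides with $4$-balancedness.

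\emph{Step 1: deleting a loop.} Consider $G_1=G-l_1$. It has the same vertex set, and every component of $G_1-T$ containing $v$ still contains $l_2$. So $G_1$ is still essentially $6$-balanced. If $G_1$ is also weakly $4$-balanced, minimality of $|V|+|L|$ makes $G_1$ redundantly $\mathcal{L}_2$-rigid. Then $G$ is redundantly $\mathcal{L}_2$-rigid as well: $G-l_1=G_1$ is rigid, and $G-f\supseteq G_1-f$ is rigid for every $f\neq l_1$. This is a contradiction.

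Hence there is $T\subseteq V$ with $|T|\leq 4$ and a component $C$ of $G_1-T$ having fewer than $4-|T|$ loops. As in the first proof of Theorem \ref{thm:weakly6balanced}, $C$ must contain $v$. Since $l_2\in C$, we get $|T|\leq 2$. Also $V(C)=\{v\}$: otherwise another vertex of $C$ lies in a component of $G-(T\cup\{v\})$ with too few loops. Thus $N_G(v)\subseteq T$, so $\deg(v)\leq 2$. Applying weak $4$-balancedness of $G$ with $T=N_G(v)$ gives $l(v)\geq 4-\deg(v)$. Consequently $v$ carries at least four constraints, of which only two are needed to pin it in $\R^2$.

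\emph{Step 2: reduction by removing $v$.} Form $H$ from $G-v$ by adding a new loop $s_i$ at each neighbour $x_i$ of $v$ (at most two of them). Checking that $H$ is weakly $4$-balanced follows the argument of Claim \ref{cla:H_weakly_6_balanced}. Take a bad component $C'$ of $H-U$. It must meet $N_G(v)$, since otherwise it is already a bad component of $G-U$. Then the loop $s_i$ bounds $|U|$, and $C'$ yields a bad component of $G-(U\cup\{v\})$.

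Essential $6$-balancedness of $H$ is handled the same way. Any component of $H-U$ with an edge either contains some $x_i$, and hence the loop $s_i$, or is a component of $G-(U\cup\{v\})$ with $|U\cup\{v\}|<6$, which has a loop by hypothesis.

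The condition $|V(H)|\geq 7$ may fail when $|V|=7$. That small case I would treat directly: weak $4$-balancedness together with essential $6$-balancedness on $7$ vertices forces enough loops that redundant rigidity can be checked by hand. Otherwise minimality gives that $H$ is redundantly $\mathcal{L}_2$-rigid.

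\emph{Step 3: lifting back to $G$ (main obstacle).} We must show $G-f$ is $\mathcal{L}_2$-rigid for every $f\in E\cup L$. Choose generic $(p,q)$ and set $q_H(s_i)=p(x_i)-p(v)$, exactly as in Case 1 of the proof of Theorem \ref{thm:weakly6balanced}.

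\textbf{Case $f$ not incident with $v$.} Start from the rigid framework $H-f$. Add $v$ with two of its loops by a $0$-loop extension (Lemma \ref{lem:ddim-kloop}). Then exchange each $s_i$ for $vx_i$ using the circuit $\{s_i,vx_i,l_1,l_2\}$. Finally add the remaining loops at $v$.

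\textbf{Case $f$ incident with $v$.} Because $v$ has at least four constraints, after removing $f$ it keeps either two loops or one loop and one edge. If two loops remain, the same exchange argument works starting from $H$, or from $H-s_i$ when $f=vx_i$. If only one loop $l$ and one edge $vx_i$ remain, then $l(v)=2$ and $\deg(v)=2$. In this case I would start from $H-s_j$, where $x_j$ is the other neighbour (the endpoint of $f$), or from $H$ itself. Add $v$ by a $1$-loop extension at $x_i$, deleting $s_i$ and adding $vx_i$ and $l$, which is allowed by Lemma \ref{lem:ddim-kloop}.

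The delicate point is making sure the genericity choice $q_H(s_i)=p(x_i)-p(v)$ remains compatible with every one of these cases simultaneously. The exchange argument must also not rely on loops that $f$ has removed. I would verify this case by case. Every case yields rigidity of $G-f$, so $G$ is redundantly rigid, the desired contradiction.
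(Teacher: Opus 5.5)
Your Steps 1 and 2 follow the paper: delete $l_1$, use minimality to get a cut $T$ with $V(C)=\{v\}$ and $N_G(v)\subseteq T$, replace $v$ by a new loop at each neighbour, and invoke minimality for $H$.

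\textbf{Where you differ: the lift.} The paper adds $v$ back to $H-f$ (or to $H-s_x$ when $f$ is at $v$) by $k$-loop extensions with $k\le 2$, using Lemma \ref{lem:ddim-kloop}. For $k=2$ the lemma only asks each $x_j$ to carry one loop, and $s_j$ provides it. You instead pin $v$ with two loops and swap each $s_i$ for $vx_i$. That is valid, but it creates the genericity issue you leave open, and you have misidentified that issue.
\begin{itemize}
\item Simultaneity is not a problem: redundant rigidity is checked one $f$ at a time, each with a fresh generic $(p,q)$.
\item The real problem is that with $q_H(s_i)=p(x_i)-p(v)$ and $\deg(v)=2$, the coordinates satisfy $q_H(s_1)-q_H(s_2)=p(x_1)-p(x_2)$. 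So $(p_H,q_H)$ is not generic.
\item What rescues the argument: the rank depends only on the directions of the loop normals. Since $\deg(v)\le 2$, $p(v)$ is recovered from the two directions as the intersection of two lines. Hence those directions are algebraically independent over $\mathbb{Q}(p_H)$.
\end{itemize}
This must be stated for your route to work. The paper's use of the lemma avoids it entirely.

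\textbf{Three points need repair.}

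First, your residual case is misdescribed. It arises only when $f$ is a loop at $v$ with $l(v)=\deg(v)=2$. Then in $G-f$ the vertex $v$ keeps one loop $l$ and \emph{both} edges $vx_1,vx_2$, and $f$ has no endpoint $x_j$. The correct step is the paper's: start from the rigid graph $H-s_j$ and perform a 1-loop extension that deletes $s_i$ and adds the three elements $vx_i,vx_j,l$. As you wrote it, the extension adds only two elements. Your alternative of starting from $H$ yields $G-f+s_j$, not $G-f$.

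Second, when checking essential 6-balancedness of $H$, a component of $H-U$ containing no $x_i$ is already a component of $G-U$ and should be compared with that. Passing to $G-(U\cup\{v\})$ fails for $|U|=5$, since then $|U\cup\{v\}|=6$.

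Third, you are right that $H$ has only six vertices when $|V|=7$, so by definition it is not essentially 6-balanced. The paper passes over this. However, ``checked by hand'' is not an argument. A clean fix is to run the minimal counterexample over graphs satisfying only the cut condition, dropping $|V|\ge 7$. Nothing in the proof of Theorem \ref{thm:main} uses $|V|\ge 7$: essential balancedness is only ever invoked through cuts of size at most five. Under that formulation $H$ qualifies.
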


\begin{proof}[Proof of Claim \ref{cla:4balanced}]
Suppose the contrary. Pick a vertex $v$ with at least two loops, say $l_1,l_2$. Consider the graph $G-l_1$. The fact that $v$ still has a loop, namely $l_2$, in $G-l_1$ implies that $G-l_1$ is essentially 6-balanced. Thus by the minimality of $|V|+|L|$, $G-l_1$ cannot be weakly 4-balanced. Therefore there exists a set $T$ such that some component $C$ in $G-l_1-T$ has less than $4-|T|$ loops. Since $G$ is weakly 4-balanced, $v\in V(C)$ and so $|T|\leq 2$. If there exists a vertex $u$ in $C$ other than $v$, then the connected component containing $u$ of $G-(T\cup\{v\})$ would have less than $4-|T|$ loops, a contradiction. Hence $V(C)=\{v\}$, and so $N(v)\subseteq T$ and $|N(v)|\leq 2$.

Consider the graph $H$ obtained from $G$ by deleting $v$ and adding a loop at each $x\in N_G(v)$. We will show that $H$ is weakly 4-balanced and essentially 6-balanced.

Consider an arbitrary connected component $C_H$ of $H-S$ for some $|S|\leq 4$. We will show that $C_H$ has at least $4-|S|$ loops. If $V(C_H)\cap N_G(v)=\emptyset$, then $C_H$ is still a connected component in $G-S$ and so the weak 4-balancedness of $G$ gives that $C_H$ has at least $4-|S|$ loops. If there exists $x\in V(C_H)\cap N_G(v)$, the fact that $x$ has at least one loop in $H$ implies that $C_H$ has at least $4-|S|$ loops or $|S|\leq 2$, so we may assume $|S|\leq 2$. Since $G$ is weakly 4-balanced the connected component containing $x$ in $G-(S\cup\{v\})$ has at least $4-|S\cup\{v\}|$ loops. This implies that $C_H$ contains at least $4-|S|$ loops as $x$ has one more loop in $H$ than it has in $G$. Therefore $H$ is weakly 4-balanced.

Now consider an arbitrary connected component $C_H$ of $H-T$ for some $|T|<6$. If $V(C_H)\cap N_G(v)\neq \emptyset$, then $C_H$ has at least one loop (every vertex in $N_G(v)$ has a loop in $H$). If $V(C_H)\cap N_G(v)= \emptyset$ then $C_H$ is also a connected component in $G-T$ and the fact that $G-T$ is essentially 6-balanced implies $C_H$ has at least one loop. Thus $H$ is essentially 6-balanced. The minimality of $|V|+|L|$ now implies that $H$ is redundantly $\mathcal{L}_2$-rigid.

Now, for all $f\in E\cup L$, we shall show $G-f$ is $\mathcal{L}_2$-rigid, hence $G$ is redundantly $\mathcal{L}_2$-rigid. First suppose that $f\in E(H)\cup L(H)$. Then $H-f$ is $\mathcal{L}_2$-rigid since $H$ is redundantly $\mathcal{L}_2$-rigid. We can now add $v$ back by a 2-dimensional $k$-loop extension for some $k=1$ or $k=2$ depending on whether $f$ is a loop a vertex in $N_G(v)$ or not. This shows that $G-f$ is $\mathcal{L}_2$-rigid by Lemma \ref{lem:ddim-kloop}.
Next suppose $f$ is incident with $v$. 
Let $l_x$ be the loop at $x$ we added when constructing $H$ from $G$ if $f=vx$, and for some arbitrary $x\in N_G(v)$ if $f$ is a loop at $v$. Then, since $H-l_x$ is $\mathcal{L}_2$-rigid, we can add $v$ to $H-l_x$ by a 2-dimensional 1-loop extension and preserve $\mathcal{L}_2$-rigidity by Lemma \ref{lem:ddim-kloop}.
\end{proof}

Since $G$ is not redundantly $\mathcal{L}_2$-rigid there exists an $f\in E\cup L$ such that at least one endpoint of $f$ is not contained in a $\mathcal{L}_2$-rigid subgraph of $G-f$. Then by \cite[Theorem 2.7]{G}, there exists a set $L'\subset L-f$ and an admissible 1-thin cover
$\mathcal{X}=\{X_0,X_1,\ldots,X_k\}$ of $G-L'-f$ such that
\begin{equation}\label{eq:leq2V}
r_2^{lc}(G-f)=\val(\mathcal{X})=|L'|+2|X_0|+\sum_{i=1}^k(2|X_i|-3)<2|V|.
\end{equation}
We choose $\mathcal{X}$ in a way that $X_0$, i.e. the looped member, is maximal.
Since $\X$ is a 1-thin cover from which we obtain the rank of $G-f$, we deduce that either $X_0=\emptyset$ or $G[X_0]$ is $\mathcal{L}_2$-rigid. This shows us that $f\notin E(X_0)\cup L(X_0)$. Moreover, the choice of $\X$ implies $|E(X_0)\cup L(X_0)|\geq 2|X_0|$ and $|E(X_i)|\geq 2|X_i|-3$ for all $1\leq i\leq t$ as otherwise $\X$ would not be the minimising cover, i.e., the cover whose value gives the rank.

If $f=uv\in E$, let $\X'=\X\cup\{\{u,v\}\}$ and $\bar L= L'$, and if $f\in L$, let $\X'=\X$ and $\bar L=L'\cup\{f\}$. That is, add the endpoint set to $\X$ when $f$ is a simple edge and add $f$ to $L'$ if it is a loop. Using the choice of $L'$ and the fact that $f\notin E(X_0)\cup L(X_0)$ we deduce that every loop in $\bar L$ is induced by some vertex in $V\setminus X_0$.

Let $\mathcal{X}'=\{X_0,X_1,\ldots,X_t\}$ by setting $t=k$ when $\mathcal{X}'=\mathcal{X}$ and otherwise by setting $t=k+1$ and $X_{k+1}=\{u,v\}$.  We shall assign some initial charges and some set of reallocation rules. Let $\mu(x)$ denote the initial charge for $x$.
Set $\mu(\bar L)=|\bar L|$, $\mu(X_0)=2|X_0|$ and $\mu(X_i)=2|X_i|-3$ for $1\leq i\leq t$. Let $\mu_{\text{total}}$ denote the sum of the charges. Then we have
\begin{equation}\label{eq:main_eq}
\mu_{\text{total}}=\mu(\bar L)+\sum_{i=0}^t\mu(X_i)=|\bar L|+2|X_0|+\sum_{i=1}^t\mu(X_i)=\val(\mathcal{X})+1<2|V|+1.
\end{equation}

Let $\mathcal{X}'(x)$ be the set of members of $\mathcal{X}'$ that contain $x\in V$. For $X\in \mathcal{X}'$ and $x\in X$,
let $\sigma_x(X)$ denote the charge $x$ gets from $X$ and $\sigma_x(\bar L)$ denote the charge $x$ gets from $\bar L$. We use the following discharging rules:
\begin{itemize}
\item R1 - If $x$ has a loop contained in $\bar L$, then $\sigma_x(\bar L)=\frac{1}{2}$.
\item R2 - For $x\in X_0$, if $x\in X_i$ for some $1\leq i\leq t$ then $\sigma_x(X_0)=\frac{3}{2}$; else $\sigma_x(X_0)=2$.
\item R3 - If $x\in X\neq X_0$ with $|X|=2$, $\sigma_x(X)=\frac{1}{2}$.
\item R4 - If $x\in X\neq X_0$ with $|X|=3$, $\sigma_x(X)=1$.
\item R5 - For $X\neq X_0$ with $|X| = 4$:
	\begin{itemize}
		\item[$\diamond$] R5A - If $X$ contains a vertex $x$ such that $N[x]\subset X$ or a vertex $x$ such that $\mathcal{X}'(x)=\{X,Y\}$ with $|Y|=2$
		then for all $x$ with $N[x]\subset X$, $\sigma_x(X)=\frac{3}{2}$; for all $x$ with $\mathcal{X}'(x)=\{X,Y\}$ with $|Y|=2$, $\sigma_x(X)=\frac{3}{2}-\frac{l(x)}{2}$
		and other vertices in $X$ get 1 from $X$. 
		\item[$\diamond$] R5B - Otherwise, $\sigma_x(X)=\frac{5}{4}$ for all $x\in X$.
	\end{itemize}
\item R6 - For a set $X\neq X_0$ with $|X|\geq 5$ and vertex $x\in X$:
\begin{itemize}
	\item[$\diamond$] R6A - If $N[x]\subset X$ then $\sigma_x(X)=2-\frac{l(x)}{2}$,
	\item[$\diamond$] R6B - if $\mathcal{X}'(x)=\{X,Y\}$ and $|Y|=2$ then $\sigma_x(X)=\frac{3}{2}-\frac{l(x)}{2}$,
	\item[$\diamond$] R6C - otherwise, $\sigma_x(X)=1$.
\end{itemize}
\end{itemize}
By the choice of $\X$ each simple graph $(X_i,E(X_i))$, $1\leq i\leq t$ contains a spanning 2-rigid subgraph as otherwise we could decompose $X_i$ into smaller sets and obtain an admissible 1-thin cover with smaller value. Thus each $x\in X_i$, $1\leq i\leq t$, has a neighbour in $X_i$.
Similarly,  the looped simple graph $(X_0,E(X_0),L(X_0))$ contains a spanning $2$-tight subgraph as otherwise we could decompose $X_0$ into smaller sets and obtain an admissible 1-thin cover with less value. Hence the fact that $l(x)\leq 1$ by Claim \ref{cla:4balanced} implies that $x\in X_0$ must have a neighbour in $X_0$.
These further imply that $X_0=\emptyset$ or $|X_0|\geq 3$. Therefore, the set $Y$ in the rules (R5-R6) cannot be $X_0$. 
For a vertex $x\in V$, let $\mu'(x)=\sigma_x(\bar L)+\sum_{i=0}^t\sigma_x(X_i)$ i.e., $\mu'(x)$ is the total charge $x$ gets from $\bar L$
and $X_i$ for $0\leq i\leq t$. Let $\mu'(\bar L)$ and $\mu'(X_i)$ denote the final charge of $\bar L$ and $X_i$ for $0\leq i\leq t$.

\begin{claim}\label{cla:max_loops}
$l(X_i)\leq 3$ for all $1\leq i\leq t$ and if the equality holds the $f$ is a loop at a vertex in $X_i$.
\end{claim}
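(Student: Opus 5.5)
We need to show $l(X_i)\leq 3$ for $1\leq i\leq t$, where $l(X_i)$ counts the loops induced by $X_i$ in $G$, and that equality forces $f$ to be a loop at a vertex of $X_i$. The approach is an exchange argument: if $X_i$ carries too many loops, we absorb $X_i$ into the looped member and contradict the minimality of the cover or the maximality of $X_0$.

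For $X_{k+1}=\{u,v\}$, Claim \ref{cla:4balanced} gives $l(X_{k+1})\leq 2$ directly. So let $1\leq i\leq k$, so that $X_i$ is a member of the original 1-thin cover $\X$ of $G-L'-f$. Since $\X$ is admissible, every loop of $G-L'-f$ lies in $X_0$. Hence every loop induced by $X_i$ lies in $L'\cup\{f\}$; in particular $X_i\cap X_0$ carries no loops of $G-L'-f$.

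Now suppose $l(X_i)\geq 3$. Form a new cover by deleting $X_i$, replacing $X_0$ by $X_0\cup X_i$, and removing from $L'$ the loops at $X_i$ that lie in $L'$. This new family still covers $G-L''-f$, where $L''$ is the reduced loop set, because every edge induced by $X_i$ is now induced by $X_0\cup X_i$. It also remains admissible and 1-thin, since 1-thinness only concerns pairs $X_j,X_l$ and merging into the looped member is allowed in the definition of \cite[Theorem 2.7]{G}. I will take this for granted.

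Next compare values. The value changes by
$$2|X_0\cup X_i|-2|X_0|-(2|X_i|-3)-|\text{loops removed}| \leq 3-|L'\cap L(X_i)|,$$
because $|X_0\cup X_i|\leq |X_0|+|X_i|$. If $f$ is not a loop at $X_i$, then $|L'\cap L(X_i)|=l(X_i)\geq 3$, so the new cover has value at most $\val(\X)$ and a strictly larger looped member. If the value strictly decreases, this contradicts the fact that $\X$ realises the rank in \eqref{eq:leq2V}. If the value is equal, it contradicts the maximality of $X_0$.

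Therefore $l(X_i)\leq 2$ whenever $f$ is not a loop in $X_i$. If $f$ is a loop in $X_i$, the same computation with $|L'\cap L(X_i)|=l(X_i)-1$ gives $l(X_i)\leq 3$. This proves both parts of the claim.

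The main obstacle is checking that the merged family is genuinely an admissible 1-thin cover with $X_0\cup X_i$ as its looped member. I expect this to follow from the definitions in \cite{G}, since thinness constraints only involve pairs of non-looped members. A secondary point is confirming that $X_0\cup X_i$ strictly enlarges $X_0$. This holds because $X_i\not\subseteq X_0$: otherwise the edges of $X_i$ would already be covered by $X_0$, and $\X$ would not be minimal, as $2|X_i|-3>0$ could be dropped.
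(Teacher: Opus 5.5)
Your strategy of absorbing $X_i$ into the looped member and comparing values is what the paper intends; its entire proof is ``this follows from the maximality of $X_0$''. However, one step in your argument is false as written.

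Admissibility only says that every loop of $G-L'-f$ is induced by $X_0$. It does not give ``every loop induced by $X_i$ lies in $L'\cup\{f\}$''. In a 1-thin cover, $X_i$ may share a vertex $x$ with $X_0$. If $l(x)=1$, that loop lies in $G-L'-f$, yet it is counted in $l(X_i)$ under your reading of $l(X_i)$ as all loops of $G$ at vertices of $X_i$. Your estimate $|X_0\cup X_i|\le |X_0|+|X_i|$ then discards exactly the term needed to handle this loop. Concretely, suppose $X_0\cap X_i=\{x\}$ with $x$ looped, two loops of $L'$ sit on $X_i\setminus X_0$, and $f\notin L(X_i)$. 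Then $l(X_i)=3$ but $|L'\cap L(X_i)|=2$, so your bound only says the value changes by at most $1$, which gives no contradiction.

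The repair is to keep the overlap. Let $a=|X_0\cap X_i|$, $m=|L'\cap L(X_i)|$, and $\varepsilon=1$ if $f$ is a loop at a vertex of $X_i$, $\varepsilon=0$ otherwise. The change in value is exactly $3-2a-m$. Loops of $L'$ and the edge $f$ avoid $X_0$, while by Claim \ref{cla:4balanced} the vertices of $X_0\cap X_i$ carry at most $a$ loops. Hence $l(X_i)\le m+\varepsilon+a$, and the change is at most $3-a-l(X_i)+\varepsilon\le 3-l(X_i)+\varepsilon$. Your argument then goes through: $l(X_i)\ge 3+\varepsilon$ produces a cover of no larger value with a strictly larger looped member.

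On the point you took for granted: if 1-thinness also restricts intersections with the looped member, the merged family could a priori violate it. This is harmless. Every cover's value bounds $r_2^{lc}(G-f)$ from above, by subadditivity of rank. In the equal-value case, any member meeting $X_0\cup X_i$ in two or more vertices could itself be absorbed for a strict decrease. So the merged family is automatically 1-thin.
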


\begin{proof}[Proof of Claim \ref{cla:max_loops}]
This follows from the maximality of $X_0$.
\end{proof}

\begin{claim}\label{cla:charge_vertex}
$\mu'(x)\geq 2$ for all $x\in V$.
\end{claim}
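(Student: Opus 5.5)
We must show $\mu'(x)\geq 2$ for every $x\in V$. The plan is a case analysis on the position of $x$ relative to $X_0$ and on the sizes of the members of $\X'(x)$, using three facts established above. First, $l(x)\leq 1$ by Claim \ref{cla:4balanced}. Second, every vertex of $X_i$ with $i\geq 1$ has a neighbour inside $X_i$. Third, the essential/weak balancedness conditions force vertices outside $X_0$ to be covered either by several members of $\X'$ or by a loop in $\bar L$.

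\textbf{Vertices in $X_0$.} If $\X'(x)=\{X_0\}$ then $x$ gets $2$ from $X_0$ by (R2). Otherwise $x$ gets $\frac32$ from $X_0$ and at least $\frac12$ from some other $X_i$, since every rule (R3)--(R6) gives at least $\frac12$. For this last point we check that (R5A) and (R6B), which give $\frac32-\frac{l(x)}2$, never go below $\frac12$ when $l(x)\leq 1$. In (R6A), $x$ has $N[x]\subset X$ and so is in no other member; likewise in (R5A)/(R6B) the vertex lies only in $X$ and a $2$-set, so these vertices are not in $X_0$.

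\textbf{Vertices not in $X_0$.} Every loop at $x$ lies in $\bar L$, because loops of $G-L'-f$ are induced by $X_0$. So $x$ receives $\frac{l(x)}2$ from $\bar L$ by (R1). Order the members containing $x$ as $X_1,\dots,X_m$ with $|X_1|\geq\cdots\geq|X_m|$, and split by $|X_1|$.

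\emph{Case $|X_1|\geq 5$.} Under (R6A), $x$ gets $2-\frac{l(x)}2$ from $X_1$ and $\frac{l(x)}2$ from $\bar L$, for a total of $2$. Under (R6B) the total is $\frac32-\frac{l(x)}2+\frac12+\frac{l(x)}2=2$. Under (R6C), $x$ is in at least two members, and the second member gives at least $1$ unless it is a $2$-set. When the second member is a $2$-set, (R6B) applies unless $m\geq 3$, and then the remaining members give at least $\frac12+\frac12$.

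\emph{Case $|X_1|=4$.} This uses (R5) and is analogous. When (R5B) applies with value $\frac54$, we need at least $\frac34$ more. We get it from a second member of size at least $3$, or from two members of size $2$, or from a loop together with a $2$-set. We rule out the leftover configurations, namely $x$ lying only in $X_1$ with no closed neighbourhood in $X_1$ (which contradicts $X_1$ being a cover member), and membership of $X_1$ plus one $2$-set (which is (R5A)).

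\emph{Case $|X_1|\leq 3$.} Here we invoke essential 6-balancedness and weak 4-balancedness. Delete from $G$ the at most $5$ vertices of $X_1\cup\cdots$ other than $x$ that separate it, i.e.\ $N(x)$ if it is small. This shows that $x$ either has degree at least $4$, giving enough members, or has a loop. Weak 4-balancedness applied to $T=N(x)$ gives $l(x)+|\{X_i\ni x\}|$ large enough that $\frac{l(x)}2+\sum\sigma_x(X_i)\geq 2$. Concretely, if all members are $2$-sets we need $l(x)+m\geq 4$, which follows from weak 4-balancedness with $T=N(x)$ when $|N(x)|\leq 3$, and from $m\geq 4$ otherwise.

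The main obstacle is the $|X_1|=4$ and small-degree subcases, where the charges barely suffice. There one must carefully use that $f$ contributes one extra member or loop, together with the structure of minimising 1-thin covers. Degenerate overlaps $|X_i\cap X_j|\geq 2$ are excluded by 1-thinness, so distinct members containing $x$ give distinct neighbours of $x$, and this is what links $m$ to $|N(x)|$.
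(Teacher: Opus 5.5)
\textbf{There is a gap in the tight subcases.} Your strategy is essentially the paper's: a rule-by-rule check using $l(x)\le 1$, the structure of the minimising 1-thin cover, and weak 4-balancedness with $T=N(x)$. The paper splits on $|\mathcal{X}'(x)|\in\{\ge 4,3,2,1\}$ rather than on the size of the largest member, but that difference is immaterial. Your $X_0$ case and your $|X_1|\ge 5$ case are fine. However, the subcases you yourself call the main obstacle are not proved. The hint you give for them (that $f$ contributes an extra member or loop, plus essential 6-balancedness) points the wrong way: neither plays any role in this claim.

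\emph{The case $|X_1|=4$ with $N[x]\subseteq X_1$} (in particular whenever $\mathcal{X}'(x)=\{X_1\}$). Here (R5A) pays only $\frac32$, not $2-\frac{l(x)}{2}$ as (R6A) does. So this case is \emph{not} analogous to $|X_1|\ge 5$: you need $\sigma_x(\bar L)=\frac12$, i.e.\ $l(x)=1$. Your (R5B) discussion does not cover it. Indeed, ``a loop together with a 2-set'' is an (R5A) configuration, not an (R5B) one.

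\emph{The case $|X_1|\le 3$.} Your assertion that weak 4-balancedness makes $l(x)+|\{X_i\ni x\}|$ large enough is false. A loopless $x$ lying in exactly two triangles has $l(x)+m=2$. The claim still holds there, but for a different reason, and you only verified the all-2-sets case.

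\textbf{The missing fact.} The paper states it at the outset: for $x\notin X_0$, deleting $T=N(x)$ leaves $\{x\}$ as a component. So weak 4-balancedness together with $l(x)\le 1$ gives $|N(x)|\ge 3$, and $|N(x)|=3$ forces $l(x)=1$.

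\emph{Fixing $|X_1|=4$.} $N[x]\subseteq X_1$ gives $|N(x)|\le 3$, hence $l(x)=1$ and $\mu'(x)=\frac32+\frac12=2$.

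\emph{Fixing $|X_1|\le 3$.}
\begin{itemize}
\item Members of size $2$ and $3$ pay exactly $(|X|-1)/2$ by (R3) and (R4).
\item Each such member is a clique in $G$, since it spans a 2-rigid subgraph.
\item Distinct members through $x$ meet only in $x$, by 1-thinness.
\item Every edge at $x\notin X_0$ is covered by a member containing $x$.
\end{itemize}
Hence $\sum_{X\in\mathcal{X}'(x)}\sigma_x(X)=|N(x)|/2$, so $\mu'(x)=\bigl(l(x)+|N(x)|\bigr)/2\ge 2$. With these two repairs your case split is complete.
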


\begin{proof}[Proof of Claim \ref{cla:charge_vertex}]
If $x\in X_0$ then $\mu'(x)\geq 2$ after $x$ gets 2 from $X_0$ by (R2) or $\frac{3}{2}$ from $X_0$ and at least $\frac{1}{2}$ from $X_i$ for some $i$ by (R3-R6). Hence we may assume $x\notin X_0$. By Claim \ref{cla:4balanced} $|N(x)|\geq 3$ and if $|N(x)|=3$ then we have $l(x)=1$.
\\\noindent\textbf{Case 1.} $|\mathcal{X}'(x)|\geq 4$.\\
Then $\mu'(x)\geq 4\cdot\frac{1}{2}=2$ after $x$ gets at least $\frac{1}{2}$ from each set in $\X'(x)$ by (R2-R6).

\noindent\textbf{Case 2.} $|\mathcal{X}'(x)|=3$.\\
First suppose $\X'(x)$ contains a member $X$ of size at least three. Then $\mu'(x)\geq 1+2\cdot \frac{1}{2}=2$ after $x$ gets at least $1$ from $X$ by (R4-R6) and at least $\frac{1}{2}$ for each set in $\X'(x)\setminus\{X\}$ by (R2-R6).

Now suppose all members in $\X'(x)$ have size two. This forces $|N(x)|=3$ and so $l(x)=1$ by 4-balancedness. Then $\mu'(x)= 3\cdot \frac{1}{2}+\frac{1}{2}=2$ after $x$ gets $\frac{1}{2}$ from each member in $X'(x)$ and $\frac{1}{2}$ from $\bar L$.

\noindent\textbf{Case 3.} $|\mathcal{X}'(x)|=2$.\\
Then $x$ is contained in a set $X$ of size at least three since $|N(x)|\geq 3$. Suppose $|X|=3$.
Then when $|N(x)|=3$ we have $\mu'(x)= 1+\frac{1}{2}+\frac{1}{2}=2$ after $x$ gets 1 from $X$ by (R4) and $\frac{1}{2}$ from the other member of $\X'(x)$ and $\frac{1}{2}$ from $\bar L$ as $|N(x)|=3$ forces $l(x)=1$. For the case $|N(x)|\geq 4$ we have $\mu'(x)\geq 1+1$ after $x$ gets 1 from $X$ by (R4) and at least 1 from the other member in $\X'(x)$ by (R4-R6). 

Now suppose $|X|\geq 4$. For the case $\X'(x)=\{X,Y\}$ with $|Y|=2$ we have $\mu'(x)\geq (\frac{3}{2}-\frac{l(x)}{2})+\frac{1}{2}+\frac{l(x)}{2}=2$ after $x$ gets $\frac{3}{2}-\frac{l(x)}{2}$ by (R5A or R6B), $\frac{1}{2}$ from $Y$ by (R3) and $\frac{l(x)}{2}$ from $\bar L$ by (R1). Otherwise, (R5B or R6C) applies to $X$ and the other member in $\X'(x)$ would have size at least three. This gives $\mu'(x)\geq 1+1=2$ after $x$ gets at least 1 from $X$ by (R5B or R6C) and at least 1 from the other member of $\X'(x)$ by (R4-R6).  

\noindent\textbf{Case 4.} $|\mathcal{X}'(x)|=1$.\\
Then the unique member $X$ in $\mathcal{X}'(x)$ has size at least four as $|N(x)|\geq 3$. 
If $|X|=4$ then $\mu'(x)=\frac{3}{2}+\frac{1}{2}$ after $x$ gets $\frac{3}{2}$ from $X$ by (R5A) and $\frac{1}{2}$ from $\bar L$ as $|X|=4$ forces $|N(x)|=3$ and so $l(x)=1$. If $|X|\geq 5$ then $\mu'(x)=(2-\frac{l(x)}{2})+\frac{l(x)}{2}=2$ after $x$ gets $2-\frac{l(x)}{2}$ from $X$ by (R6A) and $\frac{l(x)}{2}$ from $\bar L$.
\end{proof}

\begin{claim}
For a member $X_{i}\in\mathcal{X}'$ such that $1\leq i\leq t$ and $|X_{i}|\leq 3$ we have $\mu'(X_{i})\geq 0$.
\end{claim}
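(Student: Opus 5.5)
This claim is a direct computation from the discharging rules. The only rules under which a set $X\neq X_0$ with $|X|\leq 3$ gives charge away are (R3) and (R4), and both hand out a fixed amount per vertex. So I compare the initial charge $\mu(X_i)=2|X_i|-3$ with the total charge given out, $\sum_{x\in X_i}\sigma_x(X_i)$.

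First, note that $|X_i|\geq 2$ for every $1\leq i\leq t$. For $i\leq k$, each member of the admissible 1-thin cover other than $X_0$ induces a simple edge; otherwise it could be dropped from $\mathcal{X}$ and the value would decrease, contradicting minimality. For $i=k+1$ (when present) we have $X_{k+1}=\{u,v\}$ with $f=uv$. So only the sizes $2$ and $3$ need treatment.

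\textbf{Case $|X_i|=2$.} The initial charge is $\mu(X_i)=2\cdot 2-3=1$. By (R3) each of its two vertices receives $\frac{1}{2}$, so $\mu'(X_i)=1-2\cdot\frac12=0$.

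\textbf{Case $|X_i|=3$.} The initial charge is $\mu(X_i)=2\cdot 3-3=3$. By (R4) each of its three vertices receives $1$, so $\mu'(X_i)=3-3=0$.

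In both cases $\mu'(X_i)=0\geq 0$. I expect no real obstacle here. The only points to check are that no other rule, such as (R5) or (R6), applies to sets of size at most $3$, and that sets of size $1$ cannot occur. The first is immediate because (R5) and (R6) require $|X|\geq 4$. The second is the minimality argument given above.
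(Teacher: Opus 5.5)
Your two computations, $\mu'(X_i)=1-2\cdot\frac12=0$ for $|X_i|=2$ via (R3) and $\mu'(X_i)=3-3\cdot 1=0$ for $|X_i|=3$ via (R4), are exactly the paper's proof. Your remark that (R5) and (R6) do not apply to sets of size at most $3$ is also correct.

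The one step that fails is your argument excluding $|X_i|=1$. A singleton member contributes $2\cdot 1-3=-1$ to $\val(\X)$, and an empty one contributes $-3$. Deleting such a member from $\X$ would therefore \emph{increase} the value, so minimality of $\val(\X)$ cannot rule it out. Your deletion argument is valid only for members with at least two vertices that induce no edge, and the claim does not need that case at all: the computations for sizes $2$ and $3$ do not depend on whether $X_i$ induces an edge.

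The correct reason is definitional. In the admissible 1-thin covers of \cite{G} that underlie the rank formula, every member other than the looped member $X_0$ has at least two vertices. Otherwise one could add singletons and push the value below the rank. This exclusion is genuinely needed here: a singleton $X_i$ gives away nothing under (R3)--(R6), so it would end with $\mu'(X_i)=-1<0$ and the claim would fail.
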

\begin{proof}[Proof of Claim]
If $|X_i|=2$, $\mu'(X_i)=(2\cdot 2-3)-2\cdot \frac{1}{2}=0$. If $|X_i|=3$, $\mu'(X_i)=(2\cdot 3-3)-3\cdot 1=0$. 
\end{proof}

\begin{claim}\label{cla:size4}
For a member $X\in\X'\setminus \{X_0\}$ such that $|X|= 4$. Then we have
\begin{itemize}
\item[(i)]  $\mu'(X)\geq 0$, or
\item[(ii)] $\mu'(X)=-\frac{1}{2}$ and there are at least two loops for which $X$ is the only member of $\X'$ that induces these loops, or
\item[(iii)] $\mu'(X)=-1$ and there are three loops for which $X$ is the only member of $\X'$ that induces these loops.
\end{itemize}
\end{claim}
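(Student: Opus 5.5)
Starting charge of $X$ is $2\cdot 4-3=5$. Only (R5A) or (R5B) can apply to $X$, so the proof splits along these two rules.

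\textbf{Case (R5B).} Every vertex of $X$ receives $\frac54$, so $\mu'(X)=5-4\cdot\frac54=0$ and (i) holds.

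\textbf{Case (R5A).} Partition $X$ into three classes:
\begin{itemize}
\item[] $A=\{x\in X: N[x]\subset X\}$, each member taking $\frac32$;
\item[] $B=\{x: \mathcal{X}'(x)=\{X,Y\},\ |Y|=2\}$, each member taking $\frac32-\frac{l(x)}{2}$;
\item[] $C$, the remaining vertices, each taking $1$.
\end{itemize}
Then
$$\mu'(X)=5-\tfrac32|A|-\sum_{x\in B}\big(\tfrac32-\tfrac{l(x)}2\big)-|C| = 1-\tfrac{|A|+|B|}{2}+\tfrac{l(B)}{2},$$
using $|A|+|B|+|C|=4$. So $\mu'(X)\geq 0$ unless $|A|+|B|-l(B)\geq 3$. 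In that situation I need to locate loops that only $X$ induces.

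\emph{Loops at $A$.} A vertex $x\in A$ has $N(x)\subseteq X\setminus\{x\}$, so $|N(x)|\le 3$. By Claim~\ref{cla:4balanced}, $G$ is 4-balanced, so deleting $N(x)$ leaves $\{x\}$ as a component, which needs a loop; hence $l(x)=1$. Since $x\notin X_0$ and $x$ lies only in $X$, that loop is induced by no member of $\mathcal{X}'$ other than $X$. So every vertex of $A$ contributes one such loop.

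\emph{Loops at $B$.} A vertex $x\in B$ lies only in $X$ and in some $Y=\{x,y\}$, so $N(x)\subseteq X\cup\{y\}$ and $|N(x)|\le 4$. I would apply essential 6-balancedness (or 4-balancedness) to the set $(X\setminus\{x\})\cup\{y\}$ to show $l(x)\geq 1$ except in a controlled way. This is the step I expect to be the main obstacle. A vertex in $B$ with no loop lowers $\mu'(X)$ by a further $\frac12$. To bound the number of such loopless $B$-vertices I would combine:
\begin{itemize}
\item[] Claim~\ref{cla:max_loops}, which gives $l(X)\le 3$;
\item[] the fact that each $x\in X$ has a neighbour in $X$;
\item[] the fact that $|N(x)|\geq 3$, with $l(x)=1$ when $|N(x)|=3$.
\end{itemize}
Together these should show that the loopless $B$-vertices occur only when $Y$ is the edge $f$. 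Then $|B\setminus\{\text{looped}\}|\le 1$.

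\emph{Enumeration.} Write $s$ for the number of loops at $A\cup B$, all of which are induced only by $X$. Using $l(X)\le 3$:
\begin{itemize}
\item[] $s\le 1$ gives $\mu'(X)\ge 0$, which is (i);
\item[] $s=2$ gives $\mu'(X)=-\frac12$, which is (ii);
\item[] $s=3$ gives $\mu'(X)=-1$, which is (iii).
\end{itemize}
Since $\mu'(X)=1-\frac{|A|+|B|-l(B)}{2}$ and $|A|+|B|\le 4$, it then only remains to check that no configuration produces $\mu'(X)<-1$. Such a configuration would need four exclusive loops, which Claim~\ref{cla:max_loops} excludes.
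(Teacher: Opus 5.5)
Your reduction to (R5A), the formula $\mu'(X)=1-\frac{|A|+|B|-l(B)}{2}$, and your treatment of $A$ are correct and match the paper. (Each vertex of $A$ carries exactly one loop, and it lies in no other member of $\X'$.) The gap is the step you flag yourself: showing that at most one vertex of $B$ is loopless. Call this set $B_0$.

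\textbf{Why your route fails.} You propose that loopless $B$-vertices occur only when $Y$ is the pair $\{u,v\}$ coming from $f$. This does not follow from the facts you list, and it is not true in general. Nothing prevents a loopless $x\in X$ from lying in a two-element member $\{x,y\}$ of the original cover $\X$. All your listed facts ($l(X)\le 3$, a neighbour in $X$, $|N(x)|\geq 3$) hold when $X$ spans a $K_4$ with two looped vertices and two loopless vertices, each loopless vertex having a single outside neighbour. Deleting $(X\setminus\{x\})\cup\{y\}$ does not help either. It isolates the single vertex $x$, a trivial component, about which essential 6-balancedness says nothing and 4-balancedness requires only $4-4=0$ loops. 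The bound cannot be skipped: that $K_4$ configuration has $|A|=2$, $|B_0|=2$ and $\mu'(X)=-1$ with only two loops exclusive to $X$, so none of (i)--(iii) would hold.

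\textbf{The missing idea} is to treat two loopless $B$-vertices together.
\begin{itemize}
\item[] Take $x\in B_0$ with $Y=\{x,y\}$. Then 4-balancedness gives $|N(x)|\geq 4$, while $N(x)\subseteq (X\setminus\{x\})\cup\{y\}$. So $y\notin X$, and $x$ is adjacent to every other vertex of $X$ and to exactly one vertex outside $X$.
\item[] Hence any $s_1,s_2\in B_0$ are adjacent.
\item[] Delete $X\setminus B_0$ together with the outside neighbours of $B_0$, at most four vertices. This leaves $B_0$ as a component containing the edge $s_1s_2$ and no loop, contradicting essential 6-balancedness. So $|B_0|\le 1$.
\end{itemize}

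\textbf{The enumeration} should then use $|A|$ only, via $\mu'(X)=1-\frac{|A|+|B_0|}{2}$. A loop at a vertex of $B$ lies in both $X$ and $Y$, so it is \emph{not} induced by $X$ alone, and your count $s$ should not include it. Your correspondence between $s$ and $\mu'(X)$ is also off: $|A|=3$, $B_0=\emptyset$ gives $s=3$ but $\mu'(X)=-\frac12$. With $|B_0|\le 1$, and $|A|\le 3$ from Claim \ref{cla:max_loops}, the cases are:
\begin{itemize}
\item[] $|A|+|B_0|\le 2$ gives (i);
\item[] $|A|+|B_0|=3$ forces $|A|\ge 2$ and gives (ii);
\item[] $|A|+|B_0|=4$ forces $|A|=3$, $|B_0|=1$ and gives (iii).
\end{itemize}
This is the paper's argument.
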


\begin{proof}[Proof of Claim \ref{cla:size4}]
If (R5B) applies then (i) holds. Hence we may assume (R5A) applies.
Let $T\subseteq X$ be the set of vertices $x$ such that
$N(x)\subseteq X$. Since $|X|=4$, $G$ being 4-balanced forces every vertex $x\in T$ to have a loop and to be adjacent to every vertex in $X\setminus\{x\}$.
Let $S\subseteq X$ be the set of loopless vertices $x$ such that $\sigma_x(X)=\frac{3}{2}$, i.e., $S$ is the set of loopless vertices that have exactly one
neighbour outside of $X$ and is adjacent to every vertex (other than itself) in $X$. Let $P=X\setminus(T\cup S)$. Note that $\{T,S,P\}$ is a partition of $X$.
Since $\mu(X)=5$ we have
\begin{equation}\label{eq:charge4}
\mu'(X)=5-\frac{3}{2}|T|-\frac{3}{2}|S|-|P|
\end{equation}
by (R5A).
If $|S|\geq 2$ then $s_1s_2\in E$ for some $s_1,s_2\in S$ by the definition of $S$ and the fact that $|X|=4$.
Let $S_u:=\{v_s\in V\setminus X: sv_s\in E \text{ for some }s\in S\}$, that is, $S_u$ consists of the unique neighbours $v_s$ of the vertices $s$ in $S$
such that $v_s\notin X$. Note that $|S_u|\leq |S|$. We see that the set
$U=T\cup S_u\cup P$ forms a cut set of size at most four whose removal leaves the component containing the edge $s_1s_2$ without any loops,
contradicting the fact that $G$ is essentially 6-balanced. Hence $|S|\leq 1$.
Suppose $|P|\geq 2$. Then $|T|+|S|\leq 2$ and Eq (\ref{eq:charge4}) gives $\mu'(X)\geq 0$ so (i) holds. Next suppose $|P|=1$. Then $\mu'(X)=-\frac{1}{2}$ and $|T|\geq 2$, so by the definition of $T$ (ii) holds. Finally suppose $|P|=0$. By Claim \ref{cla:max_loops} and the fact that $|S|\leq 1$ we obtain $|T|=3$, $|S|=1$.
Hence $\mu'(X)=-1$, $l(X)=3$ and $X$ is the only member of $\X'$ that induces these loops so (iii) holds. 
\end{proof}

\begin{claim}\label{cla:size_geq5}
Let $X\in \mathcal{X}'\setminus\{X_0\}$ be a set with $|X|\geq 5$. Then we have
\begin{itemize}
\item [(i)] $\mu'(X)\geq 0$, or
\item [(ii)] $\mu'(X)= -\frac{1}{2}$ and there are three loops for which $X$ is the only member of $\X'$ that induces these loops.
\end{itemize}
\end{claim}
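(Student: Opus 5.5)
We partition $X$ according to the rule of (R6) that applies to each vertex, and then use essential 6-balancedness to control the vertices receiving the larger charges. Let $A\subseteq X$ be the vertices with $N[x]\subset X$ (rule R6A), let $B\subseteq X$ be those with $\X'(x)=\{X,Y\}$, $|Y|=2$ (rule R6B), and let $C=X\setminus(A\cup B)$ (rule R6C). Since $\mu(X)=2|X|-3$, the final charge is
\[
\mu'(X)=2|X|-3-2|A|-\tfrac{3}{2}|B|-|C|+\tfrac{1}{2}\,l(A\cup B),
\]
where $l(A\cup B)$ counts loops at vertices of $A\cup B$. Writing $|X|=|A|+|B|+|C|$, this becomes
\[
\mu'(X)=|C|+\tfrac{1}{2}|B|-3+\tfrac12 l(A\cup B).
\]
The whole claim therefore reduces to showing
\[
2|C|+|B|+l(A\cup B)\ge 6,
\]
or, when this fails by one, to showing that it equals $5$ and exhibiting three loops induced only by $X$.

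The key step is a cut argument modelled on the one in Claim \ref{cla:size4}. Each vertex of $B$ has exactly one neighbour outside $X$, namely its partner in the size-two member $Y$. Each vertex of $C$ lies in some other member of $\X'$, or has a neighbour outside $X$ in some other way. Take $U$ to be $C$ together with the outside neighbours of $B$, so $|U|\le |C|+|B|$. We would like $G-U$ to have a component inside $A\cup B$ that contains a simple edge, which holds because each $x\in X_i$ has a neighbour in $X_i$, and $(X,E(X))$ contains a spanning 2-rigid subgraph. If $|U|<6$, essential 6-balancedness then forces a loop in that component, so $A\cup B$ carries at least one loop.

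This only gives $l\ge1$, which is too weak, so we also use weak 4-balancedness. Removing $T=C\cup(\text{outside neighbours of }B)$ when $|T|\le 4$ leaves a component inside $A\cup B$ with at least $4-|T|$ loops. Combining this with the bound $|C|+|B|$, the inequality $2|C|+|B|+l\ge 6$ should follow when $|C|\ge1$ or $|B|$ is moderate. One subtlety: outside neighbours of $B$ may coincide, so $|U|$ can be smaller, but that only helps.

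The delicate extreme is $C=\emptyset$. If also $B=\emptyset$, then $X$ is a whole component, and since $|V|\ge 7$ and $G$ is connected, $X=V$. That is impossible once $\X'$ has other members, or else it is handled by $X_0=\emptyset$ and $l(X)\ge 4$ from weak 4-balancedness, contradicting Claim \ref{cla:max_loops} unless $f$ is a loop. With $C=\emptyset$ and $|B|$ small, weak 4-balancedness gives $l(A\cup B)\ge 4-|B|$, so $\mu'(X)\ge \tfrac12|B|-3+2-\tfrac12|B|=-1$. Claim \ref{cla:max_loops} caps the loops at $3$, which yields $\mu'(X)\ge -\tfrac12$ exactly in the case $l(X)=3$. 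The equality case gives three loops induced only by $X$, because the loops lie on vertices of $A\cup B$, which belong to no other member of size at least $3$.

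The main obstacle is the bookkeeping that rules out $\mu'(X)\le -1$. The cases with $|B|\in\{1,2\}$ and $C=\emptyset$ need both balancedness conditions used simultaneously, together with Claim \ref{cla:max_loops}. I would handle them by a short case analysis on $|B|$, using essential 6-balancedness to force $|B|\le 2$ when $l(X)\le 3$, as was done for $|S|\le1$ in Claim \ref{cla:size4}.
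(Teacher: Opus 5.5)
There is a genuine gap. Your reformulation is correct: with $A,B,C$ the vertices treated by (R6A), (R6B), (R6C), one has $\mu'(X)=|C|+\frac12|B|-3+\frac12 l(A\cup B)$, so $\mu'(X)<0$ is equivalent to $2|C|+|B|+l(A\cup B)\le 5$.

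The problem is your cut. The set $U=C\cup\{\text{outside neighbours of }B\}$ leaves the looped vertices of $X$ inside $G-U$. So essential 6-balancedness only gives one loop in $A\cup B$, and weak 4-balancedness only gives $4-|U|$. These bounds, even together with Claim \ref{cla:max_loops}, do not exclude the following configurations:
\begin{itemize}
\item $|C|=|B|=1$ and $l(A\cup B)=2$, giving $\mu'(X)=-\frac12$ with only two loops;
\item $C=\emptyset$, $|B|=1$ and $l(A\cup B)=3$, giving $\mu'(X)=-1$;
\item $C=\emptyset$, $|B|=4$ and $l(A\cup B)=1$.
\end{itemize}
So your assertion that $2|C|+|B|+l\ge 6$ ``should follow when $|C|\ge 1$'' is unsupported.

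The step ``Claim \ref{cla:max_loops} caps the loops at $3$, which yields $\mu'(X)\ge-\frac12$'' runs the wrong way: an upper bound on $l$ bounds $\mu'(X)$ from above, not below. Your plan to force $|B|\le 2$ is not justified for $|X|\ge 5$, and it would not touch the first two configurations anyway. In Claim \ref{cla:size4} that bound used $|X|=4$ together with the cut $T\cup S_u\cup P$, which removes the looped vertices, and that removal is exactly what your $U$ omits. Also, a loop at a vertex of $B$ is induced by the two-element member $Y$ as well. So for (ii) you must show the three loops sit on vertices of $A$; your reason (``no other member of size at least $3$'') does not give what the claim asserts.

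The missing idea is to put the looped vertices into the cut, so that the leftover component is loopless. Use the paper's partition:
\begin{itemize}
\item $T$ and $N$ are the looped and loopless vertices of $A$;
\item $S$ is the set of loopless vertices of $B$;
\item $P$ is $C$ together with the looped vertices of $B$.
\end{itemize}
In this notation $\mu'(X)<0$ reads $|T|+|S|+2|P|\le 5$, and $|X|\ge 5$ then forces $|N|\ge|P|$. Take $U=T\cup S_u\cup P$, where $S_u$ is the set of outside neighbours of $S$; then $|U|\le 5$.

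By Claim \ref{cla:4balanced}, every loopless vertex has at least four neighbours, and a vertex $x\in N$ has all of them in $X$. Suppose $|T|\le 2$; then $|T\cup P|\le 3$, so $x$ has a neighbour $y\in N\cup S$. The component of $G-U$ containing $xy$ lies in $N\cup S$, hence is loopless, which contradicts essential 6-balancedness. If $P=\emptyset$, weak 4-balancedness gives $S\neq\emptyset$, and a vertex of $S$ plays the role of $x$, since it has at least three neighbours in $X$.

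Hence $|T|=3$, so $l(X)=3$ with all three loops on vertices of $A$. The same cut then leaves only two cases: $|P|=1$ with $S=\emptyset$, or $P=N=\emptyset$ with $|S|=2$ and $|X|=5$. Each gives $\mu'(X)=-\frac12$. Your bookkeeping with weak 4-balancedness cannot substitute for this loopless-component cut.
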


\begin{proof}[Proof of Claim \ref{cla:size_geq5}]
Let: $N\subseteq X$ be the set of vertices $x$ such that $\sigma_x(X)=2$, i.e., the set of loopless vertices in $X$ whose neighbourhood are contained
in $X$; $T\subseteq X$ be the set of vertices  $x$ with a loop such that $\sigma_x(X)=\frac{3}{2}$, i.e., the set of looped vertices in $X$ whose neighbourhood are
contained in $X$; $S\subseteq X$ be the set of loopless vertices $x$ such that $\sigma_x(X)=\frac{3}{2}$, i.e., the set of loopless vertices $x$ such that
$\X'(x)=\{X,Y\}$ with $|Y|=2$; and $P\subseteq X$ be the set of vertices $x$ such that $\sigma_x(X)=1$. Note that $\{N,T,S,P\}$ is a partition of $X$ and
\begin{equation}\label{eq:chargeX}
\mu'(X)=\mu(X)-\sum_{x\in X}\sigma_x(X)=2|X|-3-(2|N|+\frac{3}{2}|T|+\frac{3}{2}|S|+|P|).
\end{equation}
Using Eq (\ref{eq:chargeX}) and the fact that $\{N,T,S,P\}$ is a partition of $X$ we may obtain
\begin{align*}
\mu'(X)&=2|X|-3-\big(2|N|+\frac{3}{2}|T|+\frac{3}{2}|S|+|P|\big)\\
	   &=2|X|-3-\big(\frac{3}{2}(|N|+|T|+|S|+|P|)+\frac{|N|-|P|}{2}\big)\\
	   &=2|X|-3-\frac{3}{2}|X|+\frac{|P|-|N|}{2}\\
	   &\geq\frac{|X|}{2}-\frac{|N|-|P|+6}{2}.
\end{align*}
If $|X|\geq |N|-|P|+6$ then we have $\mu'(X)\geq 0$; and if $|P|>|N|$ then $\mu'(X)\geq 0$ as $|X|\geq 5$, so (i) holds. Hence we may assume $|X|\leq |N|-|P|+5$ and $|N|\geq |P|$.
Using the fact that $\{N,T,S,P\}$ is a partition of $X$ we obtain
$$|N|-|P|+5\geq |X| = |T|+|S|+|N|+|P|+|P|-|P|=|T|+|S|+2|P|+(|N|-|P|)$$
and this gives
\begin{equation}\label{eq:ts2pl5}
|T|+|S|+2|P|\leq 5.
\end{equation}
Let $U=T\cup S_u\cup P$ where $S_u:=\{v_s\in V\setminus X: sv_s\in E \text{ for some }s\in S\}$, that is $S_u$ consists of the unique neighbours $v_s$ of the vertices $s$ in $S$
such that $v_s\notin X$. Note that $|S_u|\leq |S|$.\\

\noindent\textbf{Case 1.} $|T|\leq 2$.\\

First suppose $|P|\geq 1$. Using the fact that $|N|\geq |P|$ this gives $|N|\geq 1$. We can also use Eq (\ref{eq:ts2pl5}) and $|P|\geq 1$ to deduce that $|T|+|S|+|P|\leq 4$.
Using $|T|\leq 2$ and Eq (\ref{eq:ts2pl5}) we obtain $|T\cup P|\leq 3$. Pick a vertex $x\in N$. Recall that $N(x)\subseteq X$;
and $|N(x)|\geq 4$ as $G$ is 4-balanced. Using these we deduce that $N(x)\setminus(T\cup P)\neq \emptyset$. Pick $y\in N(x)\setminus(T\cup P)$.
Then $U$ is a cut set of size at most four whose removal leaves the component containing the edge $xy$ without any loops, contradicting the fact that $G$ is essentially 6-balanced.

Next suppose $P=\emptyset$. Since $|T|\leq 2$, we have $|S|\geq 2$ by 4-balancedness as the vertices in $T\cup N$ have all neighbours in $X$ and $N$ consists of loopless vertices.
Pick $s\in S$. Since $G$ is 4-balanced and $s$ is loopless $|N(s)|\geq 4$ and therefore $|N(s)\cap X|\geq 3$.  This tells us that there exists a neighbour of $s$ in $X\setminus T$.
Let $y$ denote such a neighbour. Since $P=\emptyset$, Eq (\ref{eq:ts2pl5}) gives $|T|+|S|\leq 5$ and so $|U|\leq 5$. Then $U$ is a cut set of size at most five whose removal
leaves the component containing the edge $sy$ without any loops, contadicting the fact that $G$ is essentially 6-balanced.\\

\noindent\textbf{Case 2.} $|T|=3$.\\

Claim \ref{cla:max_loops} and the definition of $T$ imply that $l(X)=3$ and $X$ is the only member of $\X'$ that induces these loops.
By Eq (\ref{eq:ts2pl5}) we obtain $|P|\leq 1$. First suppose $|P|=1$. Then Eq (\ref{eq:ts2pl5}) gives $S=\emptyset$ and so $|N|=|X\setminus (T\cup P)|=|X|-4$.
Then we have
\begin{align*}
\mu'(X)&=2|X|-3-\big(2|N|+\frac{3}{2}|T|+|P|\big)\\
	   &=2|X|-3-\big(2(|X|-4)+\frac{3}{2}\cdot 3+1\cdot 1\\
	   &=-\frac{1}{2}
\end{align*}
and (ii) holds.

Next suppose $P=\emptyset$. This gives $|T|+|S|\leq 5$ by Eq (\ref{eq:ts2pl5}) and so $|U|\leq 5$.
For a contradiction suppose also that $N\neq \emptyset$. Pick $x\in N$. Since $N(x)\subset X$ and $|T|=3$,
there exists $y\in N(x)\setminus T$. Then the set $U$ forms a cut set of size at most five whose removal leaves the component containing the edge $xy$ without any loops,
contadicting the fact that $G$ is essentially 6-balanced. Thus $N=\emptyset$ holds. Hence $X=T\cup S$ and this gives $5\leq |X|=|T|+|S|\leq 5$. That is $|S|=2$ and $|X|=5$.
In this case $\mu(X)=7$ and each vertex in $X$ gets $\frac{3}{2}$ from $X$ by the definitions of $T$ and $S$ which gives $\mu'(X)=7-5\cdot \frac{3}{2}=-\frac{1}{2}$, so (ii) holds.
\end{proof}

Let $l_i$ denote the number of loops in $\bar L$ induced by only $X_i\in \X'$, $1\leq i\leq t$ and let $l'$ denote the number of loops in $\bar L$ that are induced by
at least two members $X_i\in \X'$, $1\leq i\leq t$. Note that we have $|\bar L|=l' +\sum_{i=1}^tl_i$.
Let $Z\subset X_0$ be the set of vertices having a neighbour in $V\setminus X_0$, i.e., $Z$ is the set of vertices $z\in X_0$ such that $\sigma_z(X_0)=\frac{3}{2}$.
If $V=X_0$ then this would give $r_2^{lc}(G-f)=2|X_0|=2|V|$, contradicting Eq (\ref{eq:leq2V}). Thus $V\setminus X_0$ is nonempty and now we can combine this with the fact that
$G$ is 4-balanced in order to obtain
\begin{equation}\label{eq:ZplusL}
|Z|+|\bar L|\geq 4.
\end{equation} 

\begin{claim}\label{cla:charge_geq1}
$\frac{|Z|}{2}+\frac{|\bar L|}{2}+\sum_{i=1}^t\mu'(X_i)=\frac{|Z|}{2}+\frac{l'}{2}+\sum_{i=1}^t(\mu'(X_i)+\frac{l_i}{2})\geq 1$.
\end{claim}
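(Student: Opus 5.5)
The plan is to bound the sum term by term, using Claims~\ref{cla:size4} and~\ref{cla:size_geq5} together with the claim on members of size at most three. The first step is to show that $\mu'(X_i)+\frac{l_i}{2}\geq 0$ for every $1\leq i\leq t$. If $\mu'(X_i)\geq 0$ this is immediate. The negative cases are the following.
\begin{itemize}
\item[(a)] $|X_i|=4$ and $\mu'(X_i)=-\frac12$. Then $l_i\geq 2$, so the term is at least $\frac12$.
\item[(b)] $|X_i|=4$ and $\mu'(X_i)=-1$. Then $l_i=3$, so the term equals $\frac12$.
\item[(c)] $|X_i|\geq 5$ and $\mu'(X_i)=-\frac12$. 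Then $l_i=3$, so the term equals $1$.
\end{itemize}
So every term is nonnegative, and every ``bad'' set contributes at least $\frac12$. I would also record that all charges transferred by (R1)--(R6) are multiples of $\frac12$, apart from the $\frac54$ of (R5B). In that case, however, $\mu'(X)=5-4\cdot\frac54=0$. Hence each quantity $\frac{|Z|}{2}$, $\frac{l'}{2}$ and $\mu'(X_i)+\frac{l_i}{2}$ lies in $\frac12\bZ_{\geq 0}$. It therefore suffices to show that the total is positive and is not exactly $\frac12$.

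Next I split into cases according to whether any bad set occurs. If no $X_i$ is bad, then every $\mu'(X_i)\geq 0$. The left-hand side is then at least $\frac{|Z|}{2}+\frac{l'}{2}+\sum_i\frac{l_i}{2}=\frac{|Z|+|\bar L|}{2}$, which is at least $2$ by \eqref{eq:ZplusL}. If at least two sets are bad, we already have at least $\frac12+\frac12=1$. If exactly one set $X_j$ is bad and it is of type (c), its term alone gives $1$.

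The remaining case, which I expect to be the main obstacle, is a single bad set $X_j$ of type (a) or (b) contributing exactly $\frac12$. Suppose, for a contradiction, that everything else contributes $0$. Then $Z=\emptyset$, $l'=0$, $l_i=0$ for $i\neq j$, and in case (a) also $l_j=2$. So every loop of $\bar L$ lies on vertices of $X_j$, and $|\bar L|\leq 3$. Now use the structure of $X_0$.
\begin{itemize}
\item If $X_0\neq\emptyset$, then $Z=\emptyset$ means no vertex of $X_0$ has a neighbour outside $X_0$. Since $V\setminus X_0\neq\emptyset$, some component of $G$ lies in $V\setminus X_0$. All loops on such a component belong to $\bar L$, because loops of $G-f$ outside $X_0$ lie in $L'$. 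So that component has at most $3<4$ loops, contradicting weak $4$-balancedness with $T=\emptyset$.
\item If $X_0=\emptyset$, then every loop of $G$ lies in $\bar L$, so $G$ has at most three loops. This again contradicts weak $4$-balancedness with $T=\emptyset$.
\end{itemize}
Hence some other quantity is positive. By the half-integrality noted above it contributes at least $\frac12$, and the total is at least $1$, as required.

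The delicate points are twofold. The first is confirming that a loop counted in $l'$ or in some $l_i$ with $i\neq j$ really gives a positive contribution in every case. The second is checking the half-integrality claim for type (a), where $\mu'(X_j)+\frac{l_j}{2}$ could exceed $\frac12$ only if $l_j\geq 3$. That situation is favourable anyway, since $l_j=3$ already gives a term of $1$.
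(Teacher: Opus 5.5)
Your proof is correct and follows the paper's argument. Both split on how many $X_i$ have negative final charge and use Claims~\ref{cla:size4} and~\ref{cla:size_geq5} to get at least $\frac12$ per negative member; in the single-negative case, $l_i\leq 3$ together with Eq.~\eqref{eq:ZplusL} forces $|Z|\geq 1$, $l'\geq 1$ or $l_j\geq 1$ for some $j\neq i$. Your half-integrality observation and your re-derivation of Eq.~\eqref{eq:ZplusL} via components are valid but not needed, since each of those three alternatives already contributes at least $\frac12$ directly.
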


\begin{proof}[Proof of Claim \ref{cla:charge_geq1}]
First suppose there are at least two $X_i$ for which $\mu'(X_i)<0$. Then the statement holds as $\mu'(X_i)+\frac{l_i}{2}\geq \frac{1}{2}$ for each such member
by Claims \ref{cla:size4} and \ref{cla:size_geq5}. Next suppose exactly one member $X_i$, $1\leq i\leq t$ has a negative final charge, as above
we have $\mu'(X_i)+\frac{l_i}{2}\geq \frac{1}{2}$ for this fixed $i$. Since $l_i\leq 3$ by Claims \ref{cla:size4} and \ref{cla:size_geq5} using the fact that $G$ is 4-balanced
either $|Z|\geq 1$ or $l'\geq 1$ or $l_j\geq 1$ for some $j\neq i$ thus the statement holds. Finally suppose no member $X_i$, $1\leq i\leq t$ has a negative final charge.
Then the statement holds by Eq (\ref{eq:ZplusL}).
\end{proof}
We can now use Eq (\ref{eq:main_eq}), Claims \ref{cla:charge_vertex} and \ref{cla:charge_geq1}, and the facts that $\mu'(\bar L)=\frac{|\bar L|}{2}$ (by R1)
and that $\mu'(X_0)=\frac{|Z|}{2}$ (by R2) to obtain
\begin{align*}
2|V|+1>\mu_{\text{total}}&=\mu(\bar L)+\sum_{i=0}^t\mu(X_i)\\
&=\mu'(\bar L)+\mu'(X_0)+\sum_{i=1}^t\mu'(X_i) +\sum_{x\in V}\mu'(x)\\
									   &\geq \frac{|\bar L|}{2}+\frac{|Z|}{2}+ \sum_{i=1}^t\mu'(X_i) + 2|V|\\
									   &\geq 2|V|+1,
\end{align*}
a contradiction. This completes the proof.
\end{proof}

\section{Remarks}
We conclude with some examples and remarks that illustrate when our results are tight.

We cannot decrease the number 6 in Theorem \ref{thm:main}, i.e., the essential 6-balancedness. To see this, consider the graph $G$ in \cite[Fig 4]{G}. $G$ is (weakly) 5-balanced (and so weakly 4-balanced) and essentially 5-balanced. However, as explained in detail in \cite{G}, $G$ is not $\mathcal{L}_2$-rigid.

In \cite{GMRWY}, it was shown that graphs satisfying 4-connectivity and essential 6-connectivity
are globally 2-rigid. Theorem \ref{thm:main} can be regarded as its linearly constrained analogue where (weak) 4-balancedness
plays the role of 4-connectivity and essential 6-balancedness plays the role of essential 6-connectivity. In \cite{GMRWY}, it was also shown that
3-connectivity plus essential 9-connectivity is sufficient for global 2-rigidity. It is natural to ask whether the linearly
constrained anologue of this result is true, that is ``is it true that (weak) 3-balancedness plus essential 9-balancedness is sufficient for a graph to be globally
$\mathcal{L}_2$-rigid?". The examples below show that there are an infinite number of graphs
proving this is false. They also show that strengthening to essential $k$-balancedness for some $k>9$ is not
enough to make the corresponding graphs satisfy global $\mathcal{L}_2$-rigidity.\\

\noindent\textbf{Example 1.} Let $G_k=(V,E,L)$ be the looped simple graph obtained from a $k$-cycle for some $k\geq 3$ by adding a loop at each vertex, see Figure \ref{fig:G10}.
Since the $k$-cycle is 2-connected and each vertex of $G_k$ has a loop, $G_k$ is 3-balanced and essentially $t$-balanced for all $t<|V|$.
However, since $G_k$ is not redundantly $\mathcal{L}_2$-rigid, it is not globally $\mathcal{L}_2$-rigid by Theorem \ref{thm:glob_char}.
\begin{figure}[ht]
\begin{center}

\begin{tikzpicture}[scale=1,font=\small]
%\fill[pattern=north west lines,opacity=.6,draw] (0,0) circle [radius=4];

\foreach \i in {1,2,...,10}
{
	\node[roundnode] (\i) at ({(\i-1)*36}: 1cm)  [] {};
}
\foreach \i/\j in {1/2,2/3,3/4,4/5,5/6,6/7,7/8,8/9,9/10,10/1}
{
	\draw[-] (\i) to (\j);
	\pgfmathsetmacro\startp{(\i-1)*36-20}
	\pgfmathsetmacro\endp{(\i-1)*36+20}
	\draw[-, in=\startp, out=\endp,loop] (\i) to (\i);
}
%\node[roundnode] at (-0.7cm, 0cm) (u) [] {};
%\node[roundnode] at (0.7cm, 0cm) (v) [] {};
%\node[roundnode] at (0cm, 0.7cm) (a) [] {}
%	edge[] (u)
%	edge[] (v);
\end{tikzpicture}
\end{center}
\caption{The graph $G_{10}$.}
\label{fig:G10}
\end{figure}

Since the graph $G_k$ is $\mathcal{L}_2$-rigid, one can wonder whether the conditions it satisfies are sufficient for being $\mathcal{L}_2$-rigid. However,
the following example shows that we can even construct flexible examples.\\

\noindent\textbf{Example 2.} Let $K_t$ be the complete graph whose vertex set is $\{v_1,v_2,\ldots,v_t\}$ and $t\geq 3$.
For each $1\leq i\leq t$, add a triangle whose vertex set is $\{x_i,y_i,z_i\}$. For each $1\leq i\leq t$ add the simple edge $v_ix_i$ and one loop at
each of $y_i,z_i$, see Figure \ref{fig:H9}. Let $H_t=(V,E,L)$ be the resulting looped simple graph. It is easy to deduce that $H_t$ is 3-balanced and essentially $t$-balanced.
To see that $H_t$ is not even $\mathcal{L}_2$-rigid, let $\X=\{X_0\}\cup\{\{v_1,v_2,\ldots,v_t\}\}\cup\{\{x_i,y_i,z_i\}:1\leq i\leq t\}\cup\{\{v_i,x_i\}:1\leq i\leq t\}$ where $X_0=\emptyset$.
Then $\X$ is an admissible 1-thin cover of $H-L$ whose looped member is the empty set. We can use \cite{G} and the facts that $|V|=4t$ and $|L|=2t$ to deduce that
$$
r_2^{lc}(H_t)\leq |L|+(2t-3)+(3\cdot t)+t=8t-3=2|V|-3,
$$
implying that $H_t$ is not $\mathcal{L}_2$-rigid.
\begin{figure}[ht]
\begin{center}
\begin{tikzpicture}[scale=1,font=\small]
%\fill[pattern=north west lines,opacity=.6,draw] (0,0) circle [radius=4];

\foreach \i in {1,2,...,9}
{
	\node[roundnode] (\i) at ({(\i-1)*40}: 1cm)  [] {};
}
\foreach \i in {1,2,...,9}{	
	\foreach \j in {1,2,...,\i}{
	\draw[-] (\i) to (\j);}
	\pgfmathsetmacro\startpy{(\i-1)*40-30}
	\pgfmathsetmacro\endpy{(\i-1)*40+10}
	\pgfmathsetmacro\startpz{(\i-1)*40-10}
	\pgfmathsetmacro\endpz{(\i-1)*40+30}
	\begin{scope}[shift=({(\i-1)*40}:1cm)]
		\node[roundnode] (x\i) at ({(\i-1)*40}: .5cm)  [] {}
			edge[] (\i);
		\node[roundnode] (y\i) at ({(\i-1)*40-20}: 1cm)  [] {}
			edge[] (x\i)
			edge[in=\startpy, out=\endpy, loop] ();
		\node[roundnode] (z\i) at ({(\i-1)*40+20}: 1cm)  [] {}
			edge[] (x\i)
			edge[] (y\i)
			edge[in=\startpz, out=\endpz, loop] ();
	\end{scope}
}
\end{tikzpicture}
\end{center}
\caption{The graph $H_9$.}
\label{fig:H9}
\end{figure}

The next example shows that we cannot decrease the number $2t$ in Theorem \ref{thm:d_geq2_main}, i.e., (weak) $2t$-balancedness even if we add some essential $m$-balancedness condition for some $m<|V|$.

\noindent\textbf{Example 3.} For $s\geq 1$ and $k\geq 3$, let $T_k^s$ be a looped simple graph which we obtain as follows
\begin{itemize}
    \item[i] first construct a rooted tree with $s+1$ levels such that the root vertex (which is in level 0) has $k$ descendants in level 1,
    \item[ii] the number of descendants in level $l+1$ of a vertex $x$ in level $l$ with $1\leq l\leq s-1$, is $k-2$ when $l$ is odd and $k-1$ when $l$ is even 
    \item[iii] add a single loop at each vertex in odd levels,
\end{itemize}
see Figure \ref{fig:rootedtrees} for some examples. We will next construct a looped simple graph $G_k^s$ from $T_k^s$. When $s$ is odd, take $k-1$ copies of $T_k^s$; and when $s$ is even, take $k$ copies of $T_k^s$. Identify the vertices in level $s+1$ with their copies and delete multiples of loops so that each vertex has at most one loop in $G_k^s$ (when $s$ is even the vertices at level $s+1$ will have loops). See Figure \ref{fig:rootedtreescombined} for some examples of $G_k^s$. It is easy to see that the loopless vertices in $G_k^s$ have degree $k$ whereas the looped vertices have degree $k+1$. Thus the average degree of $G_k^s$ is strictly less than $k+1$ and this gives
\begin{equation}\label{eq:EplusL_bound}
    |E(G_k^s)\cup L(G_k^s)|<\frac{|V(G_k^s)|(k+1)}{2}.
\end{equation}

Note that $G_k^s$ contains at least $k$ loops and is $(k-1)$-connected since for all distinct vertices $u$ and $v$ we can find a vertex disjoint $uv$-path by using different copies of $T_k^s$ and there are $k-1$ or $k$ such $T_k^s$. Thus $G_k^s$ is $(k-1)$-balanced. We can also see that vertex cuts of size $k-1$ are the ones that separate the looped vertices. Hence $G_k^s$ is in fact $k$-balanced. Moreover using the fact that for each edge $uv\in G_k^s$ either $u$ or $v$ has a loop, we immediately deduce that $G_k^s$ is essentially $m$-balanced for all $m<|V(G_k^s)|$.

Now consider $H=G_{2t-1}^s=(V,E,L)$ for some $s\geq 1$. Then using the arguments above we see that $H$ is $(2t-1)$-balanced and essentially $m$-balanced for all $m<|V|$. We can also obtain $|E|+|L|<t|V|$ by Eq (\ref{eq:EplusL_bound}). This implies that $H^{[d-t]}$ satisfies 
$$|E(H^{[d-t]})|+|L(H^{[d-t]})|<d|V|$$
implying $H^{[d-t]}$ does not have enough edges and loops to be $\mathcal{L}_d$-rigid.
\begin{figure}[ht]
\begin{center}
\begin{tikzpicture}[scale=1,font=\small]
%\fill[pattern=north west lines,opacity=.6,draw] (0,0) circle [radius=4];

\node[roundnode] (0) at (0, 1.5)  [] {};
\foreach \i in {1,2,3}
{
    \node[roundnode] (\i) at ({\i-2},0)  [] {}
    edge[] (0)
    edge[in=50, out=10, loop] ();
}

\foreach \i in {1,2,3}
{
        \node[roundnode] (\i1) at ({\i-2},-1)  [] {}
        edge[] (\i);
}

\foreach \i in {1,2,3}
{   
    \foreach \j in {1,2}
    {
        \node[roundnode] (\i1\j) at ({\i-1.25-\j*0.5},-2)  [] {}
        edge[] (\i1)
        edge[in=50, out=10, loop] ();

        \node[roundnode] (\i1\j1) at ({\i-1.25-\j*0.5},-3)  [] {}
        edge[] (\i1\j);
    }
}
\begin{scope}[xshift=6cm]
\node[roundnode] (0) at (0, 1.5)  [] {};
\foreach \i in {1,2,3,4}
{
    \node[roundnode] (\i) at ({\i*2-5},0)  [] {}
    edge[] (0)
    edge[in=70, out=110, loop] ();
    \foreach \j in {1,2}
    {
        \node[roundnode] (\i\j) at ({\i*2-3.5-\j},-1)  [] {}
        edge[] (\i);

        \foreach \k in {1,2,3}
        {
            \node[roundnode] (\i\j\k) at ({\i*2-3.5-\j-\k*.4+.8},-2)  [] {}
            edge[] (\i\j)
            edge[in=250, out=290, loop] ();
        
        }  
    }
}    
\end{scope}
\end{tikzpicture}
\end{center}
\caption{The graphs $T_3^4$ (left) and $T_4^3$ (right).}
\label{fig:rootedtrees}
\end{figure}

\begin{figure}[ht]
\begin{center}
\begin{tikzpicture}[scale=.8,font=\small]
%\fill[pattern=north west lines,opacity=.6,draw] (0,0) circle [radius=4];

\begin{scope}[rotate=90]
\node[roundnode] (0) at (0, 1.5)  [] {};
\foreach \i in {1,2,3}
{
    \node[roundnode] (\i) at ({\i-2},0)  [] {}
    edge[] (0)
    edge[in=50, out=10, loop] ();
}

\foreach \i in {1,2,3}
{
        \node[roundnode] (\i1) at ({\i-2},-1)  [] {}
        edge[] (\i);
}

\foreach \i in {1,2,3}
{   
    \foreach \j in {1,2}
    {
        \node[roundnode] (\i1\j) at ({\i-1.25-\j*0.5},-2)  [] {}
        edge[] (\i1)
        edge[in=50, out=10, loop] ();

        \node[roundnode] (\i1\j1) at ({\i-1.25-\j*0.5},-3)  [] {}
        edge[] (\i1\j);
    }
}
\end{scope}

\begin{scope}[rotate=270,yshift=7cm,xshift=-1.5cm]
\node[roundnode] (0) at (0, 1.5)  [] {};
\foreach \i in {1,2,3}
{
    \node[roundnode] (\i) at ({\i-2},0)  [] {}
    edge[] (0)
    edge[in=50, out=10, loop] ();
}

\foreach \i in {1,2,3}
{
        \node[roundnode] (\i1) at ({\i-2},-1)  [] {}
        edge[] (\i);
}

\foreach \i in {1,2,3}
{   
    \foreach \j in {1,2}
    {
        \pgfmathsetmacro\ireverse{int(4-\i)}
        \pgfmathsetmacro\jreverse{int(3-\j)}
        \node[roundnode] (\i1\j) at ({\i-1.25-\j*0.5},-2)  [] {}
        edge[] (\i1)
        edge[in=50, out=10, loop] ()
        edge[] (\ireverse1\jreverse1);
    }
}
\end{scope}
\begin{scope}[rotate=270,yshift=7cm,xshift=1.5cm]
\node[roundnode] (0) at (0, 1.5)  [] {};
\foreach \i in {1,2,3}
{
    \node[roundnode] (\i) at ({\i-2},0)  [] {}
    edge[] (0)
    edge[in=50, out=10, loop] ();
}

\foreach \i in {1,2,3}
{
        \node[roundnode] (\i1) at ({\i-2},-1)  [] {}
        edge[] (\i);
}

\foreach \i in {1,2,3}
{   
    \foreach \j in {1,2}
    {
        \pgfmathsetmacro\ireverse{int(4-\i)}
        \pgfmathsetmacro\jreverse{int(3-\j)}
        \node[roundnode] (\i1\j) at ({\i-1.25-\j*0.5},-2)  [] {}
        edge[] (\i1)
        edge[in=50, out=10, loop] ()
        edge[] (\ireverse1\jreverse1);
    }
}
\end{scope}
\begin{scope}[rotate=180,xshift=5cm,yshift=1.5cm]
\node[roundnode] (0) at (0, 1)  [] {};
\foreach \i in {1,2,3}
{
    \node[roundnode] (\i) at ({\i-2},0)  [] {}
    edge[] (0)
    edge[in=50, out=10, loop] ();
}

\foreach \i in {1,2,3}
{
        \pgfmathsetmacro\ireverse{int(4-\i)}
        \node[roundnode] (u\ireverse) at ({\i-2},-.5)  [] {}
        edge[] (\i);
}

\end{scope}
\begin{scope}[xshift=-5cm,yshift=1.5cm]
\node[roundnode] (0) at (0, 1)  [] {};
\foreach \i in {1,2,3}
{
    \node[roundnode] (\i) at ({\i-2},0)  [] {}
    edge[] (0)
    edge[in=50, out=10, loop] ();
}

\foreach \i in {1,2,3}
{
        \node[roundnode] (\i1) at ({\i-2},-.5)  [] {}
        edge[] (\i);
}

\foreach \i in {1,2,3}
{   
    \foreach \j in {1,2}
    {
        \node[roundnode] (\i1\j) at ({\i-1.25-\j*0.5},-1.5)  [] {}
        edge[] (\i1)
        edge[] (u\i)
        edge[in=50, out=10, loop] ();
    }
}
\end{scope}

\end{tikzpicture}
\end{center}
\caption{The graphs $G_3^3$ (left) and $G_3^4$ (right).}
\label{fig:rootedtreescombined}
\end{figure}

\section{Acknowledgements}
The first two authors are supported by T\"UB\.ITAK, grant no: 124F450.
The third author was partially supported by EPSRC grant EP/X036723/1 and by UK Research and Innovation (grant number UKRI1112), under the EPSRC Mathematical Sciences Small Grant scheme.
For the purpose of open access, the authors have applied a Creative Commons Attribution (CC-BY) licence to any Author Accepted Manuscript version arising.

\end{document}